\DeclareMathOperator{\odd}{odd}
\newcommand{\ve}{\varepsilon}
\newcommand{\pbar}{\overline{\varphi}}
\newtheorem{theorem}{Theorem}
\newtheorem{lemma}[theorem]{Lemma}
\newtheorem{claim}{Claim}
\newenvironment{proofc}{\begin{proof}[Proof of Claim]}{\end{proof}}
\begin{document}

\title{Total coloring graphs with large minimum degree}
\author{
Owen Henderschedt\footnote{Auburn University, Department of Mathematics and Statistics, Auburn U.S.A.
  Email: {\tt olh0011@auburn.edu}.}    
\qquad
Jessica McDonald\footnote{Auburn University, Department of Mathematics and Statistics, Auburn U.S.A.
  Email: {\tt mcdonald@auburn.edu}.   
Supported in part by Simons Foundation Grant \#845698}
\qquad
Songling Shan\footnote{Auburn University, Department of Mathematics and Statistics, Auburn U.S.A.
  Email: {\tt szs0398@auburn.edu}. Supported in part by NSF  grant 
			DMS-2345869.}
}

\date{}

\maketitle

\begin{abstract}  
We prove that for all $\varepsilon>0$, there exists a positive integer $n_0$ such that if $G$ is a graph on $n\geq n_0$ vertices with $\delta(G)\geq\tfrac{1}{2}(1 + \varepsilon)n$,  then $G$ satisfies the Total Coloring Conjecture, that is, $\chi_T(G)\leq \Delta(G)+2$.
\end{abstract}

\vspace*{.3in}

\section{Introduction}

A \emph{total coloring} of a graph $G$ is an assignment of colors to both the edges and vertices of $G$ so that the  coloring on the edges induces an edge-coloring,  and the coloring on the vertices induces a vertex-coloring, and an incident edge and vertex must also receive different colors. The \emph{total chromatic number} of $G$, denoted $\chi_T(G)$, is the minimum number of colors needed for a total coloring of $G$. In this paper we use the term `graph' to mean simple graph, and will explicitly say `multigraph' when parallel edges are allowed; loops will not appear in this paper.  For terms not defined here, we follow \cite{WestText}.

The \emph{Total Coloring Conjecture}, proposed independently by  Behzad~\cite{Behzad-total-coloring} and Vizing~\cite{Vizing-total-coloring} in the 1960's, asserts that $\chi_T(G) \le \Delta(G)+2$ for every graph $G$, where $\Delta(G)$ is the maximum degree of $G$. Note that since a vertex of maximum degree must receive different colors from all its incident edges, every graph $G$ satisfies $\chi_T(G) \ge \Delta(G)+1$. The conjecture is therefore a natural analog of Vizing's Theorem which states that $\Delta(G) \leq \chi'(G) \leq \Delta(G)+1$, where $\chi'(G)$ is the chromatic index of $G$. Determining each of  $\chi_T(G)$ and $\chi'(G)$  exactly is  known to be NP-hard~\cite{Holyer,NPhTotal}.

There is a large literature towards the Total Coloring Conjecture that we will not survey; the interested reader is referred to~\cite{DMS,  MR4679850, JT}. Relevant to our discussion here is that the conjecture appears to get easier as the maximum degree increases, particularly when the graph is regular. In 1993 Hilton and Hind ~\cite{MR1226136} confirmed the Total Coloring Conjecture for all graphs with $\Delta(G) \ge \frac{3}{4} |V(G)|$. Recently the second and third authors, along with Dalal, lowered this bound to almost $\frac{1}{2} |V(G)|$ for sufficiently large regular graphs~\cite{DMS} . In this paper we extend that result in two directions, by replacing $\Delta(G)$ with minimum degree $\delta(G)$ and by removing the requirement of regularity.

\begin{theorem}\label{main} For all $\varepsilon>0$, there exists $n_0\in\mathbb{N}$ such that if $G$ is a graph on $n\geq n_0$ vertices with $\delta(G)\geq\tfrac{1}{2}(1 + \varepsilon)n$,  then $\chi_T(G)\leq \Delta+2$.
\end{theorem}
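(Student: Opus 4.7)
My plan is to reduce Theorem~\ref{main} to the regular case proved in \cite{DMS} by embedding $G$ in a $\Delta$-regular supergraph with essentially the same parameters. Since any proper total coloring of a graph $G^*$ with $V(G)\subseteq V(G^*)$ and $E(G)\subseteq E(G^*)$ restricts to a proper total coloring of $G$ using the same palette, we have $\chi_T(G)\le\chi_T(G^*)$. It therefore suffices to find a $\Delta$-regular $G^*\supseteq G$ on $n^*=n+O(1)$ vertices with $\Delta(G^*)=\Delta(G)$, and then apply the regular result to $G^*$.

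To construct $G^*$, let $\Delta=\Delta(G)$ and $d_v=\Delta-\deg_G(v)\ge 0$ be the deficiency of $v$. I first handle parity: if $\sum_v d_v$ is odd (equivalently, both $n$ and $\Delta$ are odd), I adjoin a single new vertex $w$ joined to $\Delta$ suitably chosen vertices of $V(G)$, arranged so that no degree exceeds $\Delta$ and the remaining deficiency sequence has even sum. Next, I look for a subgraph $H$ of the complement $\overline{G}$ (updated to include $w$ if necessary) whose degree at each vertex equals the remaining deficiency. Setting $G^*:=G\cup H$ (together with the edges at $w$) yields a $\Delta$-regular graph on $n^*\in\{n,n+1\}$ vertices. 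Since $\Delta(G^*)=\Delta\ge\tfrac12(1+\varepsilon)n\ge\tfrac12(1+\tfrac{\varepsilon}{2})n^*$ for all sufficiently large $n$, the graph $G^*$ meets the hypotheses of \cite{DMS}, which then gives $\chi_T(G^*)\le\Delta+2$ and hence $\chi_T(G)\le\Delta+2$.

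The main obstacle is establishing the existence of the subgraph $H$, which amounts to an $f$-factor problem in $\overline{G}$ with $f(v)=d_v$. The hypothesis $\delta(G)\ge\tfrac12(1+\varepsilon)n$ gives both $\Delta(\overline G)\le\tfrac12(1-\varepsilon)n-1$ and $d_v\le\tfrac12(1-\varepsilon)n-1$ for every $v$, so both the target degrees and the host degrees in $\overline{G}$ leave slack of order $\varepsilon n$. Using this slack, my plan is to verify Tutte's $f$-factor condition in $\overline G$ via crude degree counts, or equivalently to build $H$ greedily (respecting running degree constraints) and repair any discrepancies through short augmenting swaps along edges of $\overline G$. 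The non-uniformity of the deficiency sequence $(d_v)$ is what prevents an immediate appeal to a classical regularization lemma and would form the technical heart of the proof; the parity reduction step also needs care so that the deficiencies at the $\Delta$ neighbors of $w$ remain simultaneously realizable in $\overline{G}$.
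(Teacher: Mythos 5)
The key obstruction to your plan is that the embedding $G\hookrightarrow G^*$ into a $\Delta$-regular graph simply does not exist in the regime where most vertices of $G$ already have degree $\Delta$. Concretely, suppose $n-1$ vertices of $G$ have degree $\Delta$ and one vertex $u$ has degree $\Delta-2$; this is consistent with $\delta(G)\ge\tfrac12(1+\varepsilon)n$ and $\Delta<\tfrac34 n$ for large $n$. Here $\sum_v d_v = 2$ is even, so no parity adjustment is needed, yet the required $f$-factor $H\subseteq\overline G$ must have $\deg_H(u)=2$ and $\deg_H(v)=0$ for all $v\ne u$, which is impossible. Routing the excess through new vertices does not save you either: each new vertex must reach degree $\Delta>\tfrac n2$, but it can receive at most $O(1)$ edges into $V(G)$ (since $\sum_v d_v$ is bounded), so you are forced to add on the order of $\Delta$ new vertices to absorb the remaining degree among themselves; at that point $n^*\ge n+\Omega(n)$ and $\Delta/n^*$ drops below $\tfrac12$, so \cite{DMS} no longer applies. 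Your parity step has the same flaw: joining $w$ to $\Delta$ vertices of $G$ without overshooting any degree requires $\Delta$ vertices with $d_v\ge1$, and in the near-regular case there may be only $O(1)$ such vertices. Since a proper total coloring of a graph does \emph{not} restrict to a proper total coloring of a subgraph, you also cannot first delete edges to create slack and then apply the regular result as a black box.

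The paper resolves exactly this obstruction by going the opposite direction: it \emph{removes} carefully chosen spanning linear forests whose leaves sit at the low-degree vertices (built via Hakimi's theorem and the path-system lemma of Plantholt--Shan), which equalizes the degrees because removal, unlike addition, always has room. Removing a linear forest lowers $\Delta$ by $2$ while consuming only $2$ colors, so the budget $\Delta(G)+2$ is preserved when the coloring of the reduced regular graph is lifted back to $G$. Even then the paper does not invoke the \cite{DMS} regular result as a black box; instead it reduces everything to a good edge-coloring of an auxiliary graph $G^M$ (Theorem~\ref{thm:reduced}), splits into a near-regular case and a large-low-degree-set case, and proves a new multifan edge-coloring lemma (Lemma~\ref{lemma:matching-extension}) to handle both. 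So the structure you are missing is a reduction that works by subtraction rather than addition, together with a mechanism for paying for the removed edges out of the $\Delta+2$ color budget.
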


Let $G$ be a graph, and let $M$ be a matching in the complement $\overline{G}$ of $G$. Define $G^M$ as the graph obtained from $G$ by first adding one new vertex $x$, and then adding both the edges of $M$ and the edges of $E(x)=\{xu: u\in V(G)\setminus V(M)\}$. We say an edge-coloring $\varphi$ of $G^M$ is \emph{good} if $\varphi$ has at most $\Delta(G)+2$ colors and each edge of $M\cup E(x)$ is colored with a different color under $\varphi$. Note that a good edge-coloring of $G^M$ immediately leads to a total coloring of $G$ with at most $\Delta(G)+2$ colors, because the auxiliary edge-colors can be used on the vertices of $G$. 

Our proof of Theorem \ref{main} has two main parts: first we reduce Theorem \ref{main} to two different cases for $G, M$, and then we show that both of these cases lead to $G^M$ having a good edge-coloring. Section 2 of this paper contains our reduction, which relies on some classic results of Hajnal and Szemer\'{e}di \cite{HS}, Hakimi \cite{H1962}, Vizing and Gupta \cite{Vizing-2-classes, Gupta-67} and Tutte and Berge \cite{TutteTBformula, BergeTBformula} along with some more recent work of Shan \cite{SHAN2022429} and Plantholt and Shan \cite{Plantholt-Shan}.  Section 3 contains a new lemma about edge-coloring certain multigraphs so that specific edges are guaranteed to have different colors. Section 4 uses this lemma to complete the proof of Theorem \ref{main}, namely by showing that $G^M$ has a good edge-coloring in both of the cases we have reduced to.

\section{Reduction}

Let $G$ be an $n$-vertex graph, and define $V_\delta(G)$ to be the set of all minimum-degree vertices in $G$, that is, $V_\delta(G)=\{v\in V(G)| d_G(v)=\delta(G)\}$. For $v\in V(G)$, we say it has \emph{middle-degree} if $\delta(G)<d_G(v)<\Delta(G)$. Finally, for any $\xi>0$ we define
$$U_\xi(G)=\{u\in V(G)| \Delta(G)-d_G(u)\geq \xi n\};$$
we can think of $U_\xi(G)$ as being those vertices in $G$ of sufficiently small degree. 

Consider the following theorem.

\begin{theorem}\label{thm:reduced}
    For all $\varepsilon>0$, there exists $\xi $ with  $0<\xi  \ll \ve $ and $ n_0\in\mathbb{N}$ such $G^M$ has a good edge-colouring, provided $G, M$ satisfy all of the following:
    \begin{itemize}
    \item[(a)]  $G$ is a graph on $n\geq n_0$ vertices with $\delta(G)\geq\tfrac{1}{2}(1 + \varepsilon)n$ and $\Delta(G)<\frac{3}{4}n$;
    \item[(b)] $M$ is a matching in $\overline{G}$, and;
    \item[(c)] $G, M$ lies in one of the following two cases:
     \begin{enumerate}[(1)]
        \item $|M|=\lfloor \frac{n}{2} - 0.1 \ve n \rfloor$ and either $G$ is regular or $|V_\delta(G)|<\xi n$, the quantity $n-|V_\delta(G)|$ is odd, and $G$ has no middle-degree vertices;
  \item $|M|\in\{\lfloor \frac{n}{2} - 0.1 \ve n \rfloor, n-1-\Delta(G) \}$ and $|U_\xi(G)| \ge \xi n$.
    \end{enumerate}
    \end{itemize}
\end{theorem}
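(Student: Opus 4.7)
The plan is to exhibit, for $G$ and $M$ in each of the two cases of (c), a good edge-coloring of $G^M$. Begin with the structural observation that in $G^M$ every vertex $v\in V(G)$ has degree $d_G(v)+1$, while the new vertex $x$ has degree $n-2|M|$. Under either option for $|M|$, a quick calculation gives $d_{G^M}(x)<\Delta(G)+1$ (using $\Delta(G)<3n/4$ in the second subcase), so $\Delta(G^M)=\Delta(G)+1$, and Vizing's theorem already supplies \emph{some} proper edge-coloring with $\Delta(G)+2$ colors. The real content is the rainbow condition on $M\cup E(x)$: since $E(x)$ is a star its edges are automatically rainbow in any proper edge-coloring, so the task reduces to forcing the $|M|$ edges of $M$ to receive pairwise distinct colors, none shared with any edge at $x$.

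The engine of the argument will be the multigraph edge-coloring lemma announced for Section~3, which should guarantee a proper edge-coloring of a multigraph with a prescribed number of colors in which a specified edge-set is rainbow. My plan is to encode the rainbow requirement on $M$ as a purely proper-edge-coloring constraint in an auxiliary multigraph built from $G^M$---for instance by doubling each edge of $M$ so that its two parallel copies are forced apart in color, or by routing each $e=uv\in M$ through a new auxiliary vertex at which all incident edges must receive distinct colors---and then apply the Section~3 lemma to this auxiliary multigraph. The resulting proper edge-coloring will descend to a good edge-coloring of $G^M$, provided the lemma's hypotheses can be verified.

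Case~(1) is the more structured case: the degree sequence of $G$ takes only the values $\delta(G)$ and $\Delta(G)$, with the $\delta$-vertices either absent (regular subcase) or fewer than $\xi n$ in number with $n-|V_\delta(G)|$ odd. Here I would reduce to the regular subcase by a small local modification---adding a short matching inside $\overline{G}$ among the vertices of $V_\delta(G)$ to lift the low-degree vertices up to degree $\Delta(G)$---and then invoke (an analogue of) the prior result of Dalal, McDonald, and Shan for regular graphs in combination with the Section~3 lemma to handle the specified matching $M$. The parity condition on $n-|V_\delta(G)|$ is exactly what makes such a perturbation feasible. In case~(2), the set $U_\xi(G)$ provides at least $\xi n$ vertices each carrying at least $\xi n$ worth of color-slack at its incident edges, and this slack is absorbed through the Section~3 lemma; the alternative choice $|M|=n-1-\Delta(G)$ is tuned precisely so that $d_{G^M}(x)=2\Delta(G)-n+2$, concentrating the tight degree constraint at the $\Delta$-vertices alone and leaving the low-degree vertices room to absorb the routing.

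The principal obstacle I anticipate is case~(2). Balancing the rainbow requirement on $M$ against the proper-edge-coloring constraint on $G^M$ forces a delicate routing of color-choices through $U_\xi(G)$, and this is where the new lemma of Section~3 must do the essential work. By contrast, once that lemma is in hand, the remaining effort in both cases should reduce to degree bookkeeping and a check that its hypotheses are satisfied in each subcase.
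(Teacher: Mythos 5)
Your opening observation that $\Delta(G^M)=\Delta(G)+1$ is correct, and you have rightly identified that the real content is the rainbow constraint on $M\cup E(x)$. However, the proposal has two essential gaps, and the actual argument in the paper is structured quite differently from what you sketch.

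First, your proposed encoding of the rainbow constraint as a pure proper-edge-coloring constraint does not work. Doubling each edge $uv\in M$ forces only the two parallel copies of $uv$ apart; it says nothing about whether two \emph{different} matching edges $u_1v_1$ and $u_2v_2$ receive the same color. Likewise, routing $e=uv$ through a fresh auxiliary vertex $w_e$ only separates the two halves of that one detour. The only way to force all of $M$ to be rainbow by such a gadget would be to concentrate it at a single auxiliary vertex, but that vertex would then have degree roughly $n$, destroying the maximum-degree budget. There is no cheap reduction from ``proper coloring with a rainbow matching'' to ``proper coloring,'' which is precisely why the paper proves a bespoke extension lemma (Lemma~\ref{lemma:matching-extension}) whose conclusion is already phrased in terms of a distinguished edge set $J_0$ (a matching plus the star at $x$) being rainbow; that lemma is not a generic proper-coloring result you can just plug an encoded multigraph into.

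Second, your reduction of Case~(1) to the regular subcase by ``adding a short matching inside $\overline{G}$ among the vertices of $V_\delta(G)$'' does not go through: a matching raises each touched degree by exactly $1$, but the hypothesis only rules out middle-degree vertices, so the gap $\Delta(G)-\delta(G)$ can be much larger than $1$. There is no evident way to lift all of $V_\delta$ to degree $\Delta$ with a matching, nor with any bounded-degree addition in $\overline{G}$ without further argument. The parity condition $n-|V_\delta(G)|$ odd is used by the paper for a different purpose (a handshake-type bookkeeping during the balanced partition), not to make a perturbation to regularity feasible. More broadly, the paper does not reduce to the regular case at this stage at all: it applies Shan's balanced-partition lemma to split $V(G^M)$ into halves $A,B$ so that every vertex has roughly half its neighbors on each side, colors the within-part multigraph $Q[A]\cup Q[B]$ first via Lemma~\ref{lemma:matching-extension}(a), grows each color class to a (near-)perfect matching of $Q$ by Kempe-chain switches, introduces a few new colors for the edges that got uncolored in the process, and finally colors the remaining bipartite graph with Lemma~\ref{lemma:matching-extension}(b). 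Your sketch captures none of this architecture, and as written neither of its two main steps can be made to work.
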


We shall prove in this section that Theorem \ref{thm:reduced} implies Theorem \ref{main}. In order to do so, we'll need the following four classic results. 

\begin{theorem}[Hakimi \cite{H1962}]\label{thm:degree-sequence-multigraph} Let $0\leq d_n\leq \ldots \leq d_1$ be integers. Then there exists a multigraph $G$ on vertices $v_1,v_2,\ldots,v_n$ such that $d_{G}(v_i)=d_i$ for all $i\in [1,n]$ if and only if $\sum\limits_{i=1}^{n}d_i$ is even and $\sum\limits_{i=2}^{n}d_i\geq d_1$.
\end{theorem}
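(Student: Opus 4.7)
The plan is to prove necessity by a direct counting argument and sufficiency by induction on the number of edges $m := \tfrac{1}{2}\sum_{i=1}^n d_i$. Necessity is immediate: for any realizing loopless multigraph $G$, the sum $\sum_i d_i = 2|E(G)|$ is even, and since every edge incident to $v_1$ has its other endpoint among $\{v_2,\ldots,v_n\}$, we obtain $d_1 \leq \sum_{i\geq 2} d_i$.

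For sufficiency, I would induct on $m$. The base case $m=0$ forces all $d_i = 0$ and is realized by the empty graph. For $m\geq 1$, necessarily $d_1 \geq 1$, and I split into two subcases. If $\sum_{i\geq 2} d_i = d_1$, I realize the sequence directly by placing $d_i$ parallel edges between $v_1$ and $v_i$ for each $i\geq 2$; then $v_1$ has degree $\sum_{i\geq 2} d_i = d_1$ and each $v_i$ has degree $d_i$. If instead $\sum_{i\geq 2} d_i > d_1$, the parity of $\sum_i d_i$ forces $\sum_{i\geq 2} d_i \geq d_1 + 2$. This implies there exist two distinct indices $j,k \in \{2,\ldots,n\}$ with $d_j, d_k \geq 1$: otherwise only $d_2$ could be positive among $d_2,\ldots,d_n$, giving $\sum_{i\geq 2} d_i = d_2 \leq d_1$, a contradiction. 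I then add the edge $v_jv_k$, dropping each of $d_j$ and $d_k$ by $1$. The new sequence has even sum; after re-sorting, the maximum is still $d_1$ (since $d_j-1, d_k-1 < d_1$) and the sum of the other new degrees is $\sum_{i\geq 2} d_i - 2 \geq d_1$. Applying the inductive hypothesis to this smaller instance and appending the edge $v_jv_k$ produces the desired multigraph.

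The main obstacle to overcome is ensuring the induction preserves the inequality $d_{\max} \leq \sum(\text{rest})$ after subtracting two units from the right-hand side. The parity condition on $\sum_i d_i$ is precisely what promotes the strict inequality $\sum_{i\geq 2} d_i > d_1$ into a gap of at least $2$, providing exactly the slack needed to absorb the newly added edge. Without this parity upgrade the inductive step would break down, so verifying this parity-based gap is the single crucial observation around which the whole argument is organized.
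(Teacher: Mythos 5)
The paper states this result of Hakimi as a cited theorem and does not supply its own proof, so there is no in-paper argument to compare against. Your proof is correct: necessity via the handshake count and the absence of loops, and sufficiency by induction on $m=\tfrac12\sum d_i$ with the two-case split (direct star construction when $\sum_{i\ge 2}d_i = d_1$, otherwise add an edge $v_jv_k$ with $j,k\ge 2$). The one delicate point — that the parity condition upgrades $\sum_{i\ge 2}d_i > d_1$ to $\sum_{i\ge 2}d_i \ge d_1 + 2$, giving exactly the slack needed after removing two units from the right-hand side while keeping $v_1$'s degree fixed — is correctly identified and correctly handled; choosing $j,k$ away from index $1$ keeps $d_1$ as the maximum after re-sorting, so the inductive hypothesis applies cleanly. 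This is the standard induction for the multigraph (as opposed to the harder Erd\H{o}s--Gallai simple-graph) characterization, and it is complete as written.
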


\begin{theorem}[Dirac \cite{Dirac}]\label{thm:dirac} If $G$ is an $n$-vertex graph with $n\geq 3$ and $\delta(G)\geq \tfrac{n}{2}$, then $G$ has a Hamilton cycle. 
\end{theorem}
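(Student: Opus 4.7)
The plan is to use the classical rotation-extension argument to derive a contradiction from the assumption that $G$ has no Hamilton cycle. First I would observe that $G$ is connected of diameter at most $2$: for any non-adjacent $u,v$, the neighborhoods $N(u), N(v)$ sit inside $V(G)\setminus\{u,v\}$, a set of size $n-2$, while $|N(u)|+|N(v)|\geq n$, so by inclusion-exclusion they share at least two common neighbors. I would then let $P = v_1 v_2 \cdots v_k$ be a longest path in $G$; by maximality, every neighbor of $v_1$ and every neighbor of $v_k$ lies on $P$.

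Next I would produce a spanning cycle on $V(P)$ by a pigeonhole pairing. Define
\[
A = \{\, i \in \{1,\dots,k-1\} : v_1 v_{i+1} \in E(G) \,\}, \qquad
B = \{\, i \in \{1,\dots,k-1\} : v_i v_k \in E(G) \,\}.
\]
Because all neighbors of $v_1$ (resp.\ $v_k$) lie on $P$, the maps $i\mapsto v_{i+1}$ and $i\mapsto v_i$ give $|A| = d_G(v_1) \geq n/2$ and $|B| = d_G(v_k) \geq n/2$. Since $A,B \subseteq \{1,\dots,k-1\}$ and $|A|+|B|\geq n > k-1$, we have $A\cap B \neq \emptyset$. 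Picking any $i\in A\cap B$ yields the cycle
\[
C:\ v_1 v_2 \cdots v_i v_k v_{k-1} \cdots v_{i+1} v_1
\]
through all $k$ vertices of $P$.

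Finally I would show $k=n$. If $k<n$, then by connectivity some vertex $w \notin V(C)$ has a neighbor $u \in V(C)$; deleting an edge of $C$ incident to $u$ leaves a path on $V(C)$ ending at $u$, and appending $uw$ produces a path of length $k+1 > k$, contradicting the maximality of $P$. Hence $C$ is a Hamilton cycle.

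As this is a classical short result, there is no real obstacle; the only mildly subtle point is the pairing step, which works precisely because the degree lower bound $\delta(G) \geq n/2$ forces the two endpoint neighborhoods on $P$ to overlap.
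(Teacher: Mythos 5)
Your argument is correct; it is the standard rotation-extension proof of Dirac's theorem (connectivity via the common-neighbor count, a longest path, the pigeonhole crossing-chord argument to close it into a cycle, and then extension by connectivity if the cycle is not spanning). The paper does not prove this statement at all — it is a cited classical result (Theorem \ref{thm:dirac}) used as a black box — so there is no in-paper proof to compare against; your write-up is a complete and valid proof of the cited theorem.

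One cosmetic remark: after appending $uw$ you produce a path on $k+1$ vertices, i.e.\ of \emph{length} $k$, not length $k+1$; either way it is strictly longer than $P$, so the contradiction stands. Also, the diameter-at-most-$2$ observation is a bit more than you need (simple connectivity would already do for the final step), but it is correct and harmless.
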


\begin{theorem}[Hajnal and Szemer\'edi~\cite{MR0297607}]\label{them:Hajnal–Szemeredi}
	Let $G$ be a graph and let $k\ge \Delta(G)+1$ be any integer. Then $G$ has an equitable $k$-coloring, that is, a $k$-coloring where the number of vertices in any two distinct color classes differ by at most one.
\end{theorem}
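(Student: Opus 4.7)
The plan is to proceed by induction on $n = |V(G)|$. Write $n = qk + r$ with $0 \le r < k$; an equitable $k$-coloring will then consist of $r$ classes of size $q+1$ and $k-r$ classes of size $q$. The base case $n \le k$ is immediate, since $k \ge \Delta(G)+1 \ge n$ allows every vertex to receive a distinct color, giving classes of size $0$ or $1$. For the inductive step I would first obtain an arbitrary proper $k$-coloring of $G$ (which exists by greedy coloring, since $k \ge \Delta(G)+1$), and then iteratively modify it to reduce the discrepancy between class sizes.

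The main device is an augmenting-exchange argument. Among all proper $k$-colorings take one $\pi$, with color classes $V_1, \ldots, V_k$, minimizing $\sum_{i} |V_i|^2$. If $\pi$ is not yet equitable, then there is an oversized class $V_+$ with $|V_+| \ge q+2$ and an undersized class $V_-$ with $|V_-| \le q-1$. Build an auxiliary digraph $D$ on $\{V_1, \ldots, V_k\}$ with an arc $V_i \to V_j$ whenever some $v \in V_i$ has no neighbor in $V_j$: each such arc certifies a valid one-vertex move from $V_i$ to $V_j$ that preserves properness. Any directed path from $V_+$ to $V_-$ in $D$ then yields a cascade of single-vertex moves that strictly decreases $\sum_i |V_i|^2$, contradicting minimality of $\pi$. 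So the task reduces to producing such a path in $D$.

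The genuine difficulty, and the main obstacle, is ruling out the case in which no such path exists. Let $A$ be the set of classes reachable from $V_+$ in $D$; by definition, every vertex $v$ lying in a class of $A$ must have at least one neighbor in every class outside $A$ (otherwise $v$ would witness a new outgoing arc leaving $A$). The degree bound $\Delta(G) \le k-1$ together with the counts $|V_+| \ge q+2$ and $|V_-| \le q-1$ forces a very rigid quasi-bipartite structure between $\bigcup_{V_i \in A} V_i$ and its complement. A careful double-counting argument either exhibits the missing augmenting path after all, or else extracts from this rigid structure a small subgraph with a clean form, such as an independent transversal of the classes, which can be peeled off, recolored by hand, and combined with the equitable coloring of the remainder furnished by the induction hypothesis. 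This structural/counting step, handled by an elaborate case analysis in the original Hajnal–Szemerédi paper and later dramatically streamlined by Kierstead and Kostochka, is where essentially all of the content of the theorem lies; the augmenting-path framework above only becomes productive once this obstruction is resolved.
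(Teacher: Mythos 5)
The paper does not prove this statement at all: it is the Hajnal--Szemer\'edi theorem, quoted as a classical black-box result, so the only question is whether your argument stands on its own. It does not. What you have written is the standard augmenting-move framework (take a proper $k$-coloring minimizing $\sum_i |V_i|^2$, build the auxiliary digraph of feasible single-vertex moves, and cascade a vertex along a directed path from an oversized class $V_+$ to an undersized class $V_-$), and that part is fine as far as it goes. But the entire content of the theorem is the case you explicitly defer: when no directed path from $V_+$ to $V_-$ exists, i.e.\ when the set $A$ of classes reachable from $V_+$ has the property that every vertex in $\bigcup_{V_i\in A}V_i$ has a neighbor in every class outside $A$. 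You assert that ``a careful double-counting argument either exhibits the missing augmenting path after all, or else extracts \dots a small subgraph \dots which can be peeled off, recolored by hand, and combined with the equitable coloring of the remainder,'' and you yourself note that this is where ``essentially all of the content of the theorem lies.'' That is a statement of intent, not a proof step: no counting is actually performed, no structure is actually extracted, and the degree bound $\Delta(G)\le k-1$ is never used in a quantitative way.

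The inductive scaffolding is also not load-bearing as written. The induction hypothesis is never invoked except in the vague ``peel off and recombine'' clause, and that recombination is itself problematic: if you remove an independent transversal (one vertex per class) and equitably $k$-color the remainder, there is no reason the removed vertices can be reinserted one per class while keeping the coloring proper --- arranging exactly that is the hard case analysis of Hajnal--Szemer\'edi, or the ``nearly equitable coloring'' machinery of Kierstead and Kostochka, neither of which you reproduce. So the proposal is an accurate roadmap of how known proofs begin, but it contains a genuine gap at precisely the step that makes the theorem nontrivial; as a self-contained proof it cannot be accepted. (For the purposes of this paper, of course, citing the theorem as the authors do is entirely adequate.)
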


\begin{theorem}[Gupta \cite{Gupta-67} and Vizing \cite{Vizing-2-classes}]\label{thm:chromatic-index}
Every multigraph  $G$ satisfies $\chi'(G) \le \Delta(G)+\mu(G)$. 
\end{theorem}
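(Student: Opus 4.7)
The plan is to prove this by induction on $|E(G)|$, following the Vizing--Gupta fan-and-Kempe-chain argument generalized from simple graphs to multigraphs. The base case $|E(G)|=0$ is trivial. For the inductive step, pick any edge $e = xy$, and apply the inductive hypothesis to obtain a proper edge coloring $\varphi$ of $G - e$ with $k := \Delta(G) + \mu(G)$ colors. Writing $\overline{\varphi}(v)$ for the set of colors missing at $v$ under $\varphi$, we have $|\overline{\varphi}(v)| \geq k - d_{G-e}(v) \geq \mu(G)$ for every vertex $v$, with strict inequality at $x$ and $y$ since each has lost an edge. If $\overline{\varphi}(x) \cap \overline{\varphi}(y) \neq \emptyset$, a common missing color extends $\varphi$ to $e$ and we are done; so assume otherwise.

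Next, I would build a maximal \emph{Vizing fan} rooted at $x$: a sequence of edges $e_1 = xy_1, e_2 = xy_2, \ldots, e_s = xy_s$ with $y_1 = y$, together with colors $\alpha_i := \varphi(e_{i+1}) \in \overline{\varphi}(y_i)$ for $i < s$, chosen greedily so that no further extension of the sequence is possible. If at some step a color in $\overline{\varphi}(y_i)$ already lies in $\overline{\varphi}(x)$, then \emph{rotating} the fan---recoloring $e_j$ with $\alpha_j$ for $1 \leq j < i$ and assigning the common missing color to $e_i$---yields a proper extension of $\varphi$ to all of $G$.

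If no such rotation is available, pick $\alpha \in \overline{\varphi}(x)$ and $\beta \in \overline{\varphi}(y_s)$; by our assumption $\alpha \neq \beta$. Consider the maximal $\alpha\beta$-alternating path $P$ starting at $y_s$. Swapping colors $\alpha$ and $\beta$ along $P$ and then case-analyzing whether $P$ terminates at $x$ or elsewhere creates a common missing color either at $\{x, y_s\}$ or at $\{x, y_j\}$ for some earlier fan vertex $y_j$; a (possibly truncated) fan rotation then completes the extension.

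The main obstacle---and the reason the bound requires $\mu(G)$ extra colors beyond $\Delta(G)$ rather than just one as in the simple-graph case---is that the fan vertices $y_1, \ldots, y_s$ need not be distinct when $G$ is a multigraph, since $x$ may share up to $\mu(G)$ parallel edges with a single neighbor. A single vertex $w$ may appear multiple times in the fan, and the Kempe swap may interact with several fan edges incident to $w$. Managing this requires careful bookkeeping of the missing color sets at repeated vertices; specifically, if $w$ appears $r$ times in the fan, one must verify that $\overline{\varphi}(w)$ still contains a color not used by the parallel edges $xw$ encountered so far, and that the Kempe chain does not loop back through these parallel edges in a way that blocks the swap. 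The pigeonhole count $|\overline{\varphi}(w)| \geq \mu(G) \geq r$---together with the sharper bound $\mu(G)+1$ at $x$ and $y_s$---is precisely the slack provided by the extra $\mu(G) - 1$ colors beyond Vizing's simple-graph bound, and is what lets the fan extend or the Kempe swap resolve in every configuration.
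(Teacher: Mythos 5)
The paper does not actually prove this statement: it is imported as a classical theorem of Gupta and Vizing, so there is no internal proof to compare against, and your argument has to be judged as a standalone proof. Your overall route --- induction on $|E(G)|$, a Vizing fan at $x$, fan rotation, and an $\alpha\beta$-Kempe swap --- is indeed the classical way the bound $\chi'(G)\le\Delta(G)+\mu(G)$ is obtained; the multifan formalism of Stiebitz et al.\ that the paper itself uses in Section 3 is the modern packaging of exactly this argument.

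However, as written the proposal has a genuine gap precisely at the point you yourself flag as ``the main obstacle.'' The last two paragraphs assert, but do not show, that the swap-and-rotate step ``resolves in every configuration.'' Three things need actual verification in the multigraph setting. First, when a neighbour $w$ occurs several times among $y_1,\dots,y_s$, a rotation recolors several parallel edges $xw$ at once, and the inequality $|\overline{\varphi}(w)|\ge \mu(G)\ge r$ taken before the rotation does not by itself guarantee that each color being shifted onto such an edge is still missing at $w$ at the moment it arrives; one needs disjointness statements for the missing-color sets along the fan (the analogues of Claims 1 and 2 inside the paper's Lemma 13, or the counting argument in Stiebitz et al.) to exclude a clash. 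Second, the maximal $\alpha\beta$-path from $y_s$ may enter $x$ along a parallel edge, or may terminate at a vertex that occurs both before and after the intended truncation point of the fan; in each such configuration you must check that the truncated rotation followed by the swap is still proper, and this is exactly where the simple-graph proof does not transfer verbatim. Third, the sentence about the Kempe chain not ``looping back through these parallel edges in a way that blocks the swap'' is a claim requiring proof, not an observation. So the skeleton is right, but the inductive step is not yet a proof: either carry out the full case analysis of the rotated and swapped coloring, or replace it by the standard multifan counting argument (the missing-color sets of the fan vertices are pairwise disjoint and disjoint from $\overline{\varphi}(x)$, and summing degrees at $x$ then contradicts $k=\Delta(G)+\mu(G)$), which is how the theorem is established in the literature.
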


Let $k\ge 0$ be an integer. A $k$-edge-coloring of a multigraph $G$ is said to be \emph{equalized} if each color class contains either $\lfloor |E(G)|/k \rfloor$ or $\lceil |E(G)|/k \rceil$ edges.  McDiarmid~\cite{MR300623} observed the following result that we will find useful. 
\begin{theorem}\label{lem:equa-edge-coloring}
	Let $G$ be a multigraph with chromatic index $\chi'(G)$. Then for all $k\ge \chi'(G)$, there is an equalized edge-coloring of $G$ with $k$ colors. 
\end{theorem}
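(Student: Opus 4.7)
The plan is to start from an arbitrary proper $k$-edge-coloring of $G$ (which exists since $k\ge \chi'(G)$) and then perform Kempe-chain swaps that iteratively flatten the sizes of the color classes until the coloring becomes equalized. The argument uses a single potential function that strictly decreases with each swap, so termination is automatic and yields the desired equalized coloring.

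More precisely, I would fix a proper $k$-edge-coloring $\varphi$ with color classes $C_1,\ldots,C_k$ of sizes $n_1,\ldots,n_k$, and set $\Phi(\varphi) = \sum_{i=1}^k n_i^2$. Since $\sum_i n_i = |E(G)|$ is fixed, $\Phi$ is minimized exactly when the multiset $\{n_1,\ldots,n_k\}$ is as flat as possible, i.e.\ each $n_i\in\{\lfloor |E(G)|/k\rfloor,\lceil |E(G)|/k\rceil\}$; this is precisely the equalized condition. So it suffices to show that any non-equalized proper $k$-edge-coloring can be modified into another proper $k$-edge-coloring with strictly smaller $\Phi$.

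If $\varphi$ is not equalized, then there exist indices $i,j$ with $n_i\ge n_j+2$. I would consider the subgraph $H$ consisting of all edges colored $i$ or $j$. Since $H$ has maximum degree at most $2$, it decomposes into paths and cycles whose edges alternate between the two colors; every cycle then has even length and contains equally many edges of each color, as does every even-length path. Thus the imbalance $n_i-n_j>0$ must be accounted for by odd-length paths, and in particular there is at least one odd-length path component $P$ of $H$ whose two end-edges both have color $i$. Interchanging colors $i$ and $j$ along $E(P)$ produces a new proper $k$-edge-coloring in which $n_i$ and $n_j$ are replaced by $n_i-1$ and $n_j+1$ (other class sizes unchanged), and a direct computation gives $\Phi(\varphi)-\Phi(\varphi')=2(n_i-n_j)-2\ge 2$. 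Iterating this operation must terminate since $\Phi$ is a non-negative integer, and the terminal coloring is equalized.

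Essentially the whole argument reduces to the existence of a two-color Kempe path whose end-edges both have the majority color, and this is forced by the path-and-cycle decomposition of $H$; so the only real step of substance is this structural observation, while everything else is routine bookkeeping of the potential. No assumption on $G$ beyond $\chi'(G)\le k$ is needed.
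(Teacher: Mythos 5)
Your proof is correct. The paper itself does not prove this statement: it is cited as a known observation of McDiarmid (reference \cite{MR300623}), so there is no in-paper argument to compare against. Your Kempe-chain argument is the standard proof of this fact: the subgraph $H$ of edges colored $i$ or $j$ decomposes into alternating paths and cycles, cycles and even-length paths are balanced, so any surplus of color $i$ over color $j$ forces an odd path whose two end-edges are both colored $i$; swapping along that path reduces $\sum_t n_t^2$ by $2(n_i-n_j-1)\ge 2$, and since this potential is a non-negative integer the process terminates at an equalized coloring. All details check out, including the multigraph case (parallel edges in $H$ just yield $2$-cycles, which are balanced and irrelevant to the imbalance count), and properness is preserved by maximality of the alternating path component. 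This is essentially de Werra's balanced-coloring argument and matches what McDiarmid's note would supply; no gap.
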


We will also need the following results of Shan \cite{SHAN2022429} and Plantholt and Shan \cite{Plantholt-Shan}.

\begin{lemma}[Shan~{\cite[Lemma 3.2]{SHAN2022429}}]\label{lem:partition}
	There exists a positive integer $n_0$ such that for all $n\ge n_0$ the
	following holds. Let $G$ be a graph on $2n$ vertices, and $N=\{x_1,y_1,\ldots, x_t,y_t\}\subseteq V(G)$, where $ 1\le t\le n$ is an integer.  
	Then $V(G)$ can be partitioned into two  parts 
	$A$ and $B$ satisfying the properties below:
	\begin{enumerate}[(i)]
		\item  $|A|=|B|$;
		\item $|A\cap \{x_i,y_i\}|=1$ for each $i\in \{1,\ldots, t\}$;
		\item $| d_A(v)-d_B(v)| \le n^{2/3}$ for each $v\in V(G)$. 
	\end{enumerate}
	Furthermore, one such partition can be constructed in $O(2n^3 \log_2 (2n^3))$-time. 
\end{lemma}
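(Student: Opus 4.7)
The plan is a probabilistic construction followed by derandomisation. First, extend the labelling by arbitrarily pairing the $2(n-t)$ vertices of $V(G)\setminus N$ into pairs $\{x_{t+1},y_{t+1}\},\ldots,\{x_n,y_n\}$, so that $V(G)$ is partitioned into $n$ labelled pairs $P_i=\{x_i,y_i\}$. Flip $n$ independent fair coins $c_1,\ldots,c_n$; if $c_i=0$ place $x_i\in A$ and $y_i\in B$, and reverse this if $c_i=1$. Conditions (i) and (ii) then hold deterministically for every outcome of the coins, so only (iii) has to be verified.

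Fix a vertex $v\in V(G)$ and write $X_v:=d_A(v)-d_B(v)=\sum_{i=1}^n Z_i^{(v)}$, where $Z_i^{(v)}$ records the signed contribution of $P_i$. If $|N(v)\cap P_i|\in\{0,2\}$ then $Z_i^{(v)}=0$ deterministically (in the second case the contributions cancel because the pair is split), and if $|N(v)\cap P_i|=1$ then $Z_i^{(v)}$ is $\pm1$ uniformly; different pairs give independent variables. Hence $X_v$ is a sum of at most $d(v)\le 2n$ independent Rademacher terms, so Hoeffding's inequality gives
\[
\Pr\!\bigl[|X_v|>n^{2/3}\bigr]\;\le\;2\exp\!\left(-\tfrac{n^{4/3}}{2\cdot 2n}\right)\;=\;2\exp\!\left(-\tfrac{n^{1/3}}{4}\right).
\]
A union bound over the $2n$ vertices yields failure probability at most $4n\exp(-n^{1/3}/4)$, which is less than $1$ once $n$ is sufficiently large; so a valid partition exists.

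For the time bound, I would derandomise by the method of conditional expectations using the pessimistic estimator
\[
\Phi(c_1,\ldots,c_n)\;=\;\sum_{v\in V(G)}\cosh(\alpha X_v),\qquad \alpha:=n^{-1/2}.
\]
A standard MGF computation gives $\mathbb{E}[\cosh(\alpha X_v)]\le e^{\alpha^2 n}=e$, hence $\mathbb{E}[\Phi]\le 2en$. Processing the coins in order and setting each $c_i$ to minimise $\mathbb{E}[\Phi\mid c_1,\ldots,c_i]$ preserves the upper bound, and at the end $\cosh(\alpha X_v)\le 2en$ for every $v$; inverting $\cosh$ gives $|X_v|\le \alpha^{-1}\ln(4en)=O(n^{1/2}\log n)$, which is well inside the required $n^{2/3}$ for large $n$. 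Each conditional expectation is an explicit sum over $v$ of hyperbolic terms in the current $X_v$ values; updating after a coin flip costs $O(n)$ per vertex, giving $O(n^2)$ per step and $O(n^3)$ over all $n$ steps, with an extra $\log n$ factor accounting for arithmetic precision in evaluating $e^{\alpha X_v}$.

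The main obstacle is calibrating $\alpha$: it must be small enough that $\mathbb{E}[\Phi]$ is uniformly bounded (so $\alpha^2 n=O(1)$), yet large enough that $\cosh(\alpha n^{2/3})$ dwarfs that bound so the pointwise deviation is forced. The choice $\alpha=n^{-1/2}$ is canonical for an $n^{2/3}$ fluctuation over a sum of $O(n)$ Rademacher variables; any constant multiple would suffice. A secondary technicality is the verification that the conditional minimiser can actually be computed in closed form at each step, which is clean because the estimator decomposes as a sum over vertices and each $X_v$ depends only on the coins indexed by pairs $P_i$ with $|N(v)\cap P_i|=1$.
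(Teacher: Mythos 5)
The paper does not prove this lemma; it cites it directly from Shan \cite{SHAN2022429}, so there is no in-paper proof to compare against. That said, your proposal is a correct, self-contained proof of the cited result, and it follows the natural route one would expect: complete the pairing arbitrarily on $V(G)\setminus N$, flip an independent fair coin per pair, observe that (i) and (ii) hold deterministically, and control $d_A(v)-d_B(v)$ by noting it is a sum of independent mean-zero $\{0,\pm1\}$-terms (with the pairs fully inside $N(v)$ cancelling exactly), to which Hoeffding plus a union bound applies. Your derandomisation via the pessimistic estimator $\sum_v\cosh(\alpha X_v)$ with $\alpha=n^{-1/2}$ is also sound: $\mathbb{E}[\Phi]\le 2n e^{\alpha^2 n/2}=O(n)$, and the method of conditional expectations preserves this bound coin by coin since the estimator factorises over pairs, giving a final partition with $|X_v|\le \alpha^{-1}\cosh^{-1}(O(n))=O(n^{1/2}\log n)\ll n^{2/3}$, comfortably inside the stated deviation and the stated time budget.

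Two small cosmetic points, neither affecting correctness: the number of nonzero Rademacher terms in $X_v$ is at most $n$ (one per pair), not $2n$, so the Hoeffding exponent can be tightened from $n^{1/3}/4$ to $n^{1/3}/2$; and $\mathbb{E}[\cosh(\alpha X_v)]\le e^{\alpha^2 n/2}$, not $e^{\alpha^2 n}$ (you used the looser bound, which still suffices). Also worth spelling out, if this were to appear in print, is that when $v\in P_i$ the contribution of $P_i$ to $X_v$ is still either $0$ or a genuine Rademacher variable (since $v$ is not its own neighbour), so no special-casing is needed.
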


\begin{lemma}[Plantholt and Shan\cite{PS2023}]\label{lem:matching} Let $0<1/n_0\ll \varepsilon <1$ and $G$ be a graph on $n\geq n_0$ vertices such that $\delta(G)\geq (1+\ve)n/2$. Moreover, let $M=\{a_1b_1,\ldots,a_tb_t\}$ be a matching in the complete graph on $V(G)$ of size at most $\ve n/8$. Then there exist vertex-disjoint paths $P_1,P_2,\ldots,P_t$ in $G$ such that $\bigcup_{i=1}^{t} V(P_i)=V(G)$, $P_i$ joins $a_i$ to $b_i$, and these paths can be found in polynomial time.
\end{lemma}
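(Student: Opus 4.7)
The plan is to reduce the lemma to finding a specific Hamilton cycle in the supergraph $G^+ := G \cup M$ obtained from $G$ by adding the (at most $\varepsilon n/8$) edges of the prescribed matching $M$. Note that $\delta(G^+) \geq \delta(G) \geq (1+\varepsilon)n/2$. If I can produce a Hamilton cycle $C$ of $G^+$ that uses every edge of $M$, then deleting the edges of $M$ from $C$ decomposes $C$ into $t = |M|$ vertex-disjoint paths $P_1, \ldots, P_t$, where $P_i$ has endpoints $a_i$ and $b_i$ (because the chord $a_ib_i$ was removed) and collectively the paths cover $V(G)$. So the question reduces to: \emph{in $G^+$, find a Hamilton cycle containing every edge of $M$.}

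To build such a cycle, I would run a P\'osa-style rotation--extension argument adapted to preserve the matching $M$. Initialize $F := M$, viewed as a spanning linear forest (isolated vertices plus the $t$ matching edges). Greedily grow $F$ by adding an edge $uv \in E(G^+)$ with $u$ a current endpoint of some component of $F$ and $v$ either isolated or an endpoint of another component; the hypothesis $\delta(G^+) \geq (1+\varepsilon)n/2$ ensures that such an edge always exists until $F$ is a single spanning path. The standard rotation step -- given a longest path $P$ with endpoint $x$ and an edge $xv \in E(G^+)$ whose other end $v$ lies in the interior of $P$, replace the edge $vv^+$ by $xv$ to produce a new path with endpoint $v^+$ -- is applied only when $vv^+ \notin M$, thereby keeping every matching edge intact throughout. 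Since $|V(M)| \leq \varepsilon n/4$ and each current endpoint has at least $(1+\varepsilon)n/2$ neighbors in $G^+$, at every step at least $\varepsilon n/4$ choices of rotation vertex $v$ satisfy $vv^+ \notin M$, so the rotation--extension never gets stuck before $F$ becomes a Hamilton path.

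Once $F$ is a spanning path $u_1 u_2 \cdots u_n$ with endpoints $x = u_1$ and $y = u_n$ whose edge set contains $M$, I close it into a Hamilton cycle. Because $|N_{G^+}(x)| + |N_{G^+}(y)| \geq (1+\varepsilon)n$, the standard Dirac-type pigeonhole along the path yields an index $i$ with $xu_{i+1} \in E(G^+)$ and $yu_i \in E(G^+)$; I further require $u_iu_{i+1} \notin M$. Since at most $t \leq \varepsilon n/8$ of the $n-1$ interior edges of the path lie in $M$, the $\varepsilon n/2$ surplus in the degree-sum count is more than enough to force the existence of such an $i$. Replacing $u_iu_{i+1}$ by the two chord edges $xu_{i+1}$ and $yu_i$ now produces the desired Hamilton cycle through $M$, from which removing $M$ yields the paths $P_1,\dots,P_t$.

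The main obstacle I expect is the bookkeeping in the rotation--extension stage: one must verify that every operation leaves every edge of $M$ in place, never shortens the path, and preserves the invariant that the current linear forest is extendable. The key quantitative input that makes everything work is that the linear surplus $\varepsilon n/2$ in the minimum-degree hypothesis strictly dominates $2|M| \leq \varepsilon n/4$, so the combined ``cost'' of protecting the $t$ matching edges together with their $2t$ endpoints is absorbed by the available slack. Since rotation, extension, and the final chord closure are all polynomial-time operations, and $t = O(n)$, the paths can be produced in polynomial time, giving the algorithmic conclusion.
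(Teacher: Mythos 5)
Your reduction in the very first step is incorrect, and the error propagates through everything else. If $C$ is a Hamilton cycle of $G\cup M$ containing every edge of $M$, then deleting the $t$ edges of $M$ from $C$ does \emph{not} give paths with endpoints $a_i,b_i$: each resulting arc of the cycle joins an endpoint of one matching edge to an endpoint of a \emph{different} matching edge (the next one along $C$). Already for $t=2$, a cycle of the form $a_1b_1\cdots a_2b_2\cdots a_1$ decomposes, after removing $a_1b_1$ and $a_2b_2$, into a $b_1$--$a_2$ path and a $b_2$--$a_1$ path, not into an $a_1$--$b_1$ path and an $a_2$--$b_2$ path. So the parenthetical ``$P_i$ has endpoints $a_i$ and $b_i$ because the chord $a_ib_i$ was removed'' is exactly where the argument fails (it is true only for $t=1$). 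Note also that the natural repair --- adding the linking matching $\{b_1a_2,b_2a_3,\dots,b_ta_1\}$ instead of $M$ and asking for a Hamilton cycle through those edges --- is still not enough as stated: a Hamilton cycle through a prescribed set of independent edges may traverse them in an arbitrary cyclic order and orientation, and only the ``correct'' order recovers the pairing $a_i$--$b_i$ (again checkable with $t=2$). So the linkage statement you need is genuinely stronger than ``Hamilton cycle through a prescribed matching,'' and your rotation--extension sketch never controls this ordering. A secondary gap: the claim that $\delta\ge(1+\varepsilon)n/2$ lets the greedy step run ``until $F$ is a single spanning path'' is false as written (all neighbours of a current endpoint can be interior vertices of existing components); this is precisely where a careful P\'osa-type analysis, including absorbing whole components, would be required, and the one-line degree count does not supply it.

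For comparison, the paper does not prove this lemma at all --- it imports it from Plantholt and Shan --- and the standard proof there is quite different from your plan: for $i=1,\dots,t-1$ one connects $a_i$ to $b_i$ by a short path (length $2$, through a common neighbour, which exists since $|N(a_i)\cap N(b_i)|\ge 2\delta-n\ge\varepsilon n$ while only $O(\varepsilon n)$ vertices have been used), all internally disjoint from each other and from $V(M)$; then the last pair $a_t,b_t$ is joined by a Hamilton path of the graph induced on all remaining vertices, which exists because that graph still has minimum degree at least half its order plus $\Omega(\varepsilon n)$ and is therefore Hamiltonian-connected. That route avoids the ordering issue entirely, makes the ``cover all of $V(G)$'' requirement automatic, and is constructive in polynomial time. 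If you want to salvage your approach, you would have to prove a Hamilton cycle through the linking edges \emph{in a prescribed cyclic order with prescribed orientations}, which is essentially the linkage lemma itself.
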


We will prove one new lemma about matchings to use for our reduction, and for this we need the Tutte-Berge Formula.

\begin{theorem}[Tutte \cite{TutteTBformula} and Berge \cite{BergeTBformula}] Let $H$ be an $n$-vertex graph. Then the size of a maximum matching in $H$ is equal to $\tfrac{1}{2}\mathop{\min}_{S\subseteq V(G)}\{n+|S|-\odd(H-S) \}$, where $\odd(H-S)$ denotes the number of  components  of odd order in $H-S$. 
\end{theorem}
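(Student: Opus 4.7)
The plan is to establish the Tutte-Berge Formula via two matching inequalities, rewriting the claim equivalently as $n-2\nu(H)=\max_{S\subseteq V(H)}(\odd(H-S)-|S|)$, where $\nu(H)$ is the size of a maximum matching.

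For the direction $n-2\nu(H)\geq \odd(H-S)-|S|$ (which yields the upper bound on $\nu(H)$), fix any $S\subseteq V(H)$ and any matching $M$. Inside each odd component $C$ of $H-S$, parity forces at least one vertex to be unmatched by $M$ within $C$; such a vertex is either $M$-unsaturated in $H$ or matched by $M$ to a vertex of $S$. Because distinct odd components must use distinct vertices of $S$ in the latter case, the total number of $M$-unsaturated vertices is at least $\odd(H-S)-|S|$. Taking $M$ to be maximum, $n-2\nu(H)\geq \odd(H-S)-|S|$, as needed.

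For the reverse direction I would reduce to Tutte's theorem on perfect matchings. Set $d=\max_{S\subseteq V(H)}(\odd(H-S)-|S|)$. Then $d\geq 0$ (by $S=\emptyset$) and $d\equiv n\pmod{2}$, since for every $S$ the even components of $H-S$ contribute an even number of vertices, giving $\odd(H-S)\equiv n-|S|\pmod{2}$. Form $H'$ by adjoining $d$ new vertices, each adjacent to every other vertex of $H'$; then $|V(H')|=n+d$ is even. I would verify Tutte's condition $\odd(H'-T)\leq |T|$ for all $T\subseteq V(H')$ in two cases. If some new vertex avoids $T$ then $H'-T$ is connected, hence $\odd(H'-T)\leq 1\leq |T|$, with the corner case $T=\emptyset$ handled by the fact that $|V(H')|$ is even. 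If $T$ contains every new vertex, then writing $S=T\cap V(H)$ gives $H'-T=H-S$ and $\odd(H-S)\leq |S|+d=|T|$ directly from the definition of $d$. Tutte's theorem thus supplies a perfect matching of $H'$, whose restriction to $V(H)$ saturates at least $n-d$ vertices, yielding $n-2\nu(H)\leq d$, as required.

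The main obstacle is the lower-bound direction: choosing the correct number of added vertices so that $|V(H')|$ has the right parity, and then verifying Tutte's condition in both cases without slips. A viable alternative route is the Gallai-Edmonds decomposition, which explicitly produces a canonical minimizing $S$, though it is considerably heavier machinery than needed here.
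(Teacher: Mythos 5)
The paper states this as a classical theorem of Tutte and Berge and does not reprove it, so there is no in-paper argument to compare against. Your proof is the standard derivation of the Tutte--Berge formula from Tutte's 1-factor theorem, and it is correct. The easy direction is handled properly: picking one vertex per odd component that is unmatched within its component, noting that the ones that are matched at all must go to distinct vertices of $S$, and hence at least $\odd(H-S)-|S|$ are exposed. For the reverse direction the parity observation $\odd(H-S)\equiv n-|S|\pmod 2$ is exactly what makes $|V(H')|=n+d$ even, and the two-case verification of Tutte's condition for $H'$ is sound: if a new (dominating) vertex lies outside $T$ then $H'-T$ is connected (with $T=\emptyset$ handled by evenness of $|V(H')|$), while if all $d$ new vertices lie in $T$ then $H'-T=H-S$ with $|T|=|S|+d$ and the definition of $d$ gives $\odd(H-S)\le|S|+d$. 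Restricting the resulting perfect matching of $H'$ to $V(H)$ leaves at least $n-d$ vertices of $H$ matched among themselves, yielding $n-2\nu(H)\le d$. If you want to be fully airtight you could make explicit that your chosen within-component-unmatched vertices are distinct across components, and that $H'-T$ being a single vertex is still ``connected'' in your Case 1, but neither of these is a genuine gap.
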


Within the proof of our matching lemma, which follows, and indeed for our later work as well, we use $E_G(S,T)$ denote the set of edges of a graph $G$ with one endpoint in some $S\subseteq V(G)$ and one endpoint in $T\subseteq V(G)$. We let $e_G(S,T) = |E_G(S,T)|$. For convenience we may use this same notation when $S$ or $T$ is a subgraph of $G$, instead of writing $V(S), V(T)$.

\begin{lemma}\label{lem: large-matching}
Let $ 0<\xi  \ll 1$ and $n_0\in \mathbb{N}$ such that $\frac{1}{n_0} \ll \xi$.   Let $G$ be a graph on  $n \ge n_0$ vertices  with $\delta(G)\geq\frac{n}{2}$, $\Delta(G) < \frac{3n}{4}$ and  $|U_\xi(G)| < \xi n$.  Then there exists a matching $M$ in $\overline{G}$ with $|M|\geq \frac{n}{2}-1.5\xi n$.
\end{lemma}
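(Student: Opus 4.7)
The plan is to apply the Tutte--Berge formula and argue by contradiction. Suppose there exists $S\subseteq V(\overline G)$ with $s=|S|$ and $c:=\odd(\overline G-S)>s+3\xi n$. Since trivially $c+s\le n$, this forces $s<\tfrac{n}{2}-\tfrac{3}{2}\xi n$. Set $r:=n-1-\Delta(G)$; by hypothesis $r>\tfrac{n}{4}$, every $v\in V\setminus U_\xi(G)$ satisfies $r\le d_{\overline G}(v)<r+\xi n$, while $|U_\xi(G)|<\xi n$ and $d_{\overline G}(v)\le\tfrac{n}{2}-1$ for all $v$.

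Classify each odd component $K_i$ of $\overline G-S$ as \emph{small} if $|K_i|\le r$ and \emph{large} otherwise, and let $P,L$ denote the respective counts, so that $c=P+L$. Each large component has at least $r+1>\tfrac{n}{4}$ vertices, and these vertex sets are disjoint, so $L(r+1)\le n-s$ yields $L\le 3$. The heart of the argument is a double edge-count bounding $P$. For any small component $K_i$ of odd size $k\in\{1\}\cup[3,r]$, the bound $d_{\overline G}(v)\ge r$ for every $v\in K_i$ together with $e_{\overline G}(K_i)\le\binom{k}{2}$ gives
\[
  e_{\overline G}(K_i,S)\;\ge\; k(r-k+1)\;\ge\; r
\]
(the last inequality uses $r\ge 3$, which holds since $n$ is large); moreover, if $K_i\cap U_\xi(G)\neq\emptyset$ then one vertex contributes an extra $\xi n$ to the degree sum, improving the bound to $r+\xi n$.

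On the $S$-side, splitting $S$ into $S\setminus U_\xi(G)$ (whose vertices have $d_{\overline G}(v)<r+\xi n$) and $S\cap U_\xi(G)$ (of size less than $\xi n$, with $d_{\overline G}(v)\le\tfrac{n}{2}-1$), and noting $\tfrac{n}{2}-1-(r+\xi n)\le\tfrac{n}{4}$, one obtains
\[
  e_{\overline G}(S,V\setminus S)\;\le\;\sum_{v\in S}d_{\overline G}(v)\;\le\; s(r+\xi n)+\xi n\cdot\tfrac{n}{4}.
\]
Summing the component lower bounds (the Type~A improvement only helps here) gives $rP\le s(r+\xi n)+\tfrac14\xi n^2$. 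Dividing by $r$ and substituting $r>\tfrac{n}{4}$ along with $s<\tfrac{n}{2}-\tfrac{3}{2}\xi n$ yields
\[
  P\;\le\; s+\frac{s\xi n}{r}+\frac{\xi n^{2}}{4r}
  \;\le\; s+(2\xi n-6\xi^{2}n)+\xi n
  \;=\; s+3\xi n-6\xi^{2}n.
\]
Combined with $L\le 3$ this gives $c=P+L\le s+3\xi n-6\xi^{2}n+3$. Since $\tfrac{1}{n_0}\ll\xi$ one may assume $n_0\ge\tfrac{1}{2\xi^{2}}$, so $6\xi^{2}n\ge 3$ and $c\le s+3\xi n$, contradicting the hypothesis $c>s+3\xi n$.

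The main obstacle is the careful accounting of additive errors: the stray $+3$ from (at most three) large components must be absorbed by the quadratic gain $6\xi^{2}n$ arising from the Tutte--Berge constraint $s<\tfrac{n}{2}-\tfrac{3}{2}\xi n$. This is exactly what the convention $\tfrac{1}{n_0}\ll\xi$ permits, allowing one to choose $n_0\gg\xi^{-2}$ rather than merely $\gg\xi^{-1}$; without this stronger choice the casework near $\Delta(G)$ close to $\tfrac{3n}{4}$ (where $r$ is close to $\tfrac{n}{4}$ and the ratio $\xi n/r$ is maximal) would not close.
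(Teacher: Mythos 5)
Your proposal is correct and follows essentially the same route as the paper: the Tutte--Berge formula applied to $\overline{G}$, a split of the odd components of $\overline{G}-S$ into small and large ones, and a double count of $e_{\overline{G}}(S,V\setminus S)$ exploiting the near-regularity of $\overline{G}$ forced by $|U_\xi(G)|<\xi n$; the only differences are cosmetic (size threshold $r\approx n/4$ rather than $\xi n$, hence at most $3$ large components instead of fewer than $1/\xi$, and a contradiction framing with the slack absorbed via $6\xi^2 n$ rather than the paper's $\xi^2n^2/\delta(\overline{G})$ term). One tiny repair: $\Delta(G)<\tfrac{3n}{4}$ only gives $r=n-1-\Delta(G)>\tfrac{n}{4}-1$, not $r>\tfrac{n}{4}$, but replacing $r>\tfrac{n}{4}$ by $r\ge\tfrac{n}{4}-1$ changes your estimates by an additive $O(\xi)$, which the $6\xi^{2}n\ge 3$ slack easily absorbs.
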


\begin{proof}
Let $H= \overline{G}$. Then  $\Delta(H) <  \frac{n}{2}$ and $\delta(H)>\frac{n}{4}-1$.
%$$|\{v\in V(H)| d_H(v)-\delta(H) \geq \xi n\}|< \xi n.$$
Let  $S\subseteq V(H)$. We'll show that $\odd(H-S)-|S|< 3\xi n$, from which the Tutte-Berge Formula guarantees a matching in $H$ of size at least $\frac{n}{2}-1.5\xi n$. 

Let $\odd(H-S)=p+q$, with $p$ counting those odd components of order at most $\xi n$, and $q$ counting those odd components of order larger than $\xi n$. Note that $q<\frac{1}{\xi}$. To get an upper bound on $p$, we consider an odd component $D$ with $ 1\le |V(D)| \le \xi n$. We know that $\xi n <\delta(H)$ since $\xi\ll\frac{1}{4}-1$. By counting the total number of neighbors of each vertex in $D$, we have 
\begin{eqnarray*}
    e_H(D,S) &\ge &|V(D)| (\delta(H)-|V(D)|+1)\\
    &\geq &  \delta(H), 
\end{eqnarray*}
 as the function $|V(D)| (\delta(H)-|V(D)|+1)$ is concave down in $|V(D)|$
 and so takes its minimum at the two endpoints of $|V(D)|$. So counting these edges for every such $D$, we get
 $$
  p \cdot \delta(H) \leq e_H(S, H-S).$$
We therefore get that
\begin{equation}\label{eqn:pq}
\odd(H-S)=p+q < \frac{e_H(S, H-S)}{\delta(H)}+\frac{1}{\xi}.
\end{equation}
For any $v\notin U_\xi(G)$, $d_G(v)> \Delta(G)-\xi n$ and hence $d_H(v)< \delta(H)+\xi n$. So we get
\begin{eqnarray*}
    e_H(S,H-S) &  \le    & |S\cap U_\xi(G)|\cdot \Delta(H)+|S\setminus U_\xi(G)|\cdot(\delta(H)+\xi n)  \\
    &=& |S\cap U_\xi(G)|\cdot \Delta(H) +|S|(\delta(H)+\xi n) -|S\cap U_\xi(G)|(\delta(H)+\xi n) \\
    &<& |S\cap U_\xi(G)|\cdot\tfrac{n}{2}+|S|(\delta(H)+\xi n) -|S\cap U_\xi(G)|(\delta(H)+\xi n) \\
    &<& (\xi n)\tfrac{n}{2} + (|S|-\xi n)(\delta(H)+\xi n), 
\end{eqnarray*}
where we get the last inequality above as $|S\cap U_\xi(G)|\cdot\tfrac{n}{2} -|S\cap U_\xi(G)|(\delta(H)+\xi n)$ is a function increasing in $|S\cap U_\xi(G)|$
and $|S\cap U_\xi(G)| < \xi n$. 
Substituting this into (\ref{eqn:pq}) gives
$$ \odd(H-S) \le  \frac{\xi n^2}{2\delta(H)}+|S|-\xi n + \frac{(|S|-\xi n) \xi n}{ \delta(H)}+\frac{1}{\xi}.$$
Using $\delta(H)> \tfrac{n}{4}-1$ and $|S| <\frac{n}{2}-\xi n <2\delta(H)$, we get
\begin{equation}\label{eqn:xin} \odd(H-S) < 2 \xi n +|S|-\xi n + 2\xi n - \frac{\xi^2 n^2}{\delta(H)} +\frac{1}{\xi} =3\xi n +|S| - \frac{\xi^2 n^2}{\delta(H)} +\frac{1}{\xi}.\end{equation}
Note that $n\geq n_0\gg \tfrac{1}{\xi} >\tfrac{1}{\xi^3}$, so from (\ref{eqn:xin}) we get
$$\odd(H-S) < 3\xi n +|S| - \frac{1}{\xi}\left(\frac{n}{\delta(H)}\right) +\frac{1}{\xi}.$$
Since $\delta(H)<n$ this last line tells us that $\odd(G-S)-|S|< 3\xi n$, as desired.
\end{proof}

We can now prove the main result of this section.\\

\begin{theorem} If Theorem \ref{thm:reduced} is true, then Theorem \ref{main} is true.
\end{theorem}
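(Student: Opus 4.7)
The plan is to use Theorem~\ref{thm:reduced} as a black box: given $G$ satisfying the hypotheses of Theorem~\ref{main}, I choose a matching $M\subseteq\overline{G}$, possibly after enlarging $G$ to a supergraph $G'$, so that the resulting pair fits one of the two cases of condition~(c) of Theorem~\ref{thm:reduced}. Fix $\varepsilon>0$, let $\xi$ be the constant provided by Theorem~\ref{thm:reduced} (with $\xi\ll\varepsilon$), and take $n_0$ large enough for every lemma invoked below. If $\Delta(G)\geq \tfrac{3}{4}n$, the classical Hilton--Hind theorem~\cite{MR1226136} already gives $\chi_T(G)\leq \Delta(G)+2$, so from here on I assume $\Delta(G)<\tfrac{3}{4}n$, securing condition~(a) of Theorem~\ref{thm:reduced}.

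Next I split on $|U_\xi(G)|$. In the case $|U_\xi(G)|\geq \xi n$, the aim is to find a matching in $\overline{G}$ whose size is either $\lfloor n/2-0.1\varepsilon n\rfloor$ or $n-1-\Delta(G)$; since $\delta(\overline{G})\geq n-1-\Delta(G)\geq n/4$ and $U_\xi(G)$ supplies at least $\xi n$ vertices of even higher degree in $\overline{G}$, a short greedy argument produces such a matching, and Theorem~\ref{thm:reduced}(c)(2) finishes this case. In the other case $|U_\xi(G)|<\xi n$, Lemma~\ref{lem: large-matching} supplies a matching in $\overline{G}$ of size at least $n/2-1.5\xi n$, which (since $\xi\ll\varepsilon$) I trim down to $M$ with $|M|=\lfloor n/2-0.1\varepsilon n\rfloor$. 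If $G$ is regular, or $G$ already has no middle-degree vertex, $|V_\delta(G)|<\xi n$, and $n-|V_\delta(G)|$ odd, then Theorem~\ref{thm:reduced}(c)(1) applies directly to $(G,M)$.

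Otherwise I replace $G$ by a supergraph $G'$ on the same vertex set with $\Delta(G')=\Delta(G)$, obtained by adding edges of $\overline{G}$ among the fewer than $\xi n$ non-$\Delta$ vertices so that $G'$ attains the alternative structural form of Theorem~\ref{thm:reduced}(c)(1); a good edge-coloring of $(G')^M$ then yields a total coloring of $G'$ with at most $\Delta(G)+2$ colors, whose restriction colors $G$. The principal obstacle is this last construction: one must simultaneously arrange the correct multiset of degrees in $G'$, keep $\Delta(G')=\Delta(G)$, force the correct parity of $n-|V_\delta(G')|$, and maintain $M\subseteq\overline{G'}$. My plan is to write down the target deficit degree sequence on the tiny non-$\Delta$ set, use Hakimi's theorem (Theorem~\ref{thm:degree-sequence-multigraph}) to confirm realizability (which is routine because the host complement is nearly complete on so small a set), and employ parity slack by shifting one vertex's target degree up or down by $1$ to fix the parity of $n-|V_\delta(G')|$. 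The matching $M$ is then accommodated either by choosing the added edges disjoint from $V(M)$, or by re-invoking Lemma~\ref{lem: large-matching} on $G'$, which still satisfies the hypotheses of that lemma. This bookkeeping is the delicate step, but since it takes place inside a vertex set of size less than $\xi n$ in an almost-complete host, no new structural difficulty arises.
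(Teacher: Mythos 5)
Your first step (Hilton--Hind to force $\Delta(G)<\tfrac{3}{4}n$) and your first split on $|U_\xi(G)|$ agree with the paper, and the case $|U_\xi(G)|\geq\xi n$ and the ``clean'' sub-case of $|U_\xi(G)|<\xi n$ are handled in essentially the paper's way (the paper uses Hajnal--Szemer\'edi to exhibit a matching of size $n-1-\Delta$ in $\overline{G}$ rather than a greedy argument, but either should suffice there). The genuine gap is in your ``otherwise'' step, and it is a conceptual one, not merely a bookkeeping one.

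First, your count is wrong: you assert the set of non-$\Delta$ vertices has size less than $\xi n$, but the hypothesis $|U_\xi(G)|<\xi n$ only bounds the set of vertices with $d_G(v)\le\Delta-\xi n$. There can be $\Theta(n)$ vertices with degree in $(\Delta-\xi n,\Delta)$ --- indeed the paper's Case~2a ($|V_\delta(G)|\ge\xi n$) forces at least $\xi n$ non-$\Delta$ vertices. So the ``almost-complete host on a tiny set'' picture does not apply.

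Second, and more fundamentally, the additive strategy cannot work because you cannot raise a vertex's degree to $\Delta$ without raising its partner's degree as well, and you have capped $\Delta(G')=\Delta(G)$. Consider $G$ with $n-2$ vertices of degree $\Delta$ and two adjacent vertices $u,v$ of degree $\Delta-1$, with $n-2$ even. Then $|V_\delta(G)|=2<\xi n$, there are no middle-degree vertices, but $n-|V_\delta(G)|$ is even, so condition (1) fails on parity. Any new edge at $u$ must go to a degree-$\Delta$ vertex (the only sub-$\Delta$ vertex other than $u$ is $v$, and $uv\in E(G)$), which pushes $\Delta(G')$ above $\Delta(G)$. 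There is no supergraph fix here; Hakimi's theorem on the deficit sequence is irrelevant because the issue is not realizability of a degree sequence in the abstract but feasibility of the needed edges inside the actual $\overline{G}$ while keeping $\Delta$ fixed. The paper resolves exactly this kind of situation by going \emph{down} rather than \emph{up}: in its Case~2b it deletes a (near-)perfect matching on $G-V_\delta(G)$, uniformly lowering the $\Delta$-vertices by one, and iterates; in its Case~2a it builds an auxiliary multigraph $L$ on the deficiencies $\Delta'-d_{G'}(v_i)$, splits $E(L)$ into small matchings via Vizing--Gupta and McDiarmid, and peels off a spanning linear forest (Plantholt--Shan) for each, exploiting that internal vertices of the forest lose degree $2$ while leaves lose only $1$, so the deficiency sequence is cancelled exactly and the remainder is regular. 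That subtractive mechanism has no additive analogue under the constraint $\Delta(G')=\Delta(G)$, so the gap in your argument is not patchable without switching to the paper's deletion strategy (or something equivalent to it).
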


\begin{proof} We have $\varepsilon>0$, and we choose  
$n_0,\xi$  so that 
$n_0\in \mathbb{N}$ is at least the lower bound of $n$ in Theorem~\ref{thm:reduced}, 
and that 
\begin{equation}\label{eqn:parameters}
	0< \frac{1}{n_0 }   \ll  \xi  \ll \ve.    
\end{equation}
Let $G$ be a graph on $n\ge n_0$ vertices with $\delta(G) \ge \frac{1}{2}(1+\varepsilon)n$. By the result of Hilton and Hind~\cite{MR1226136} mentioned in the introduction,  we may assume that $\Delta(G)< \frac{3}{4} n$. Since we are assuming that Theorem \ref{thm:reduced} is true, in order to prove Theorem 1 it suffices to find a matching $M$ in $\overline{G}$ so that $G, M$ satisfy
either (1) or (2) of Theorem \ref{thm:reduced}; we rewrite these conditions here for convenience:
\begin{enumerate}[(1)]
        \item $|M|=\lfloor \frac{n}{2} - 0.1 \ve n \rfloor$ and either $G$ is regular or $|V_\delta(G)|<\xi n$, the quantity $n-|V_\delta(G)|$ is odd, and $G$ has no middle-degree vertices;
  \item $|M|\in\{\lfloor \frac{n}{2} - 0.1 \ve n \rfloor, n-1-\Delta(G) \}$ and $|U_\xi(G)| \ge \xi n$.
    \end{enumerate}
We divide our proof into different cases according to the sizes of $U_\xi(G)$ and $V_\delta(G)$.\\

\noindent \textbf{Case 1: $|U_\xi(G)| \geq \xi n$}.  

Theorem~\ref{them:Hajnal–Szemeredi} says that $G$ has an equitable-coloring using $\Delta(G)+1$ colors. As $\Delta(G)\geq \delta(G)>\frac{n}{2}$, each color class has size 2 or 1. Let $p$ be the total number of color classes of size 2, and $q$ be that of size 1. Then we have $p+q=\Delta(G)+1$ and $2p+q =n$. This implies that $p=n-1-\Delta(G)$. So, there exists a matching $M_0$ in $G$ of size $n-1-\Delta(G)$.  

Since $|U_\xi(G)| \ge \xi n$, then we choose in $\overline{G}$ a matching $M$ of size $\lfloor \frac{n}{2} - 0.1 \ve n \rfloor$ if it exists; otherwise, we take $M=M_0$. Either way, we meet the conditions of (2).\\

\noindent \textbf{Case 2: $|U_\xi(G)| < \xi n$}. 

By the assumption of this case we may apply Lemma \ref{lem: large-matching} to get a matching $M$ in $\overline{G}$ of size  $\lfloor \frac{n}{2} - 0.1 \ve n \rfloor$. If $G$ is regular, then $G, M$ meet the conditions of (1), so we may assume not. Moreover, we may assume that at least one of the following three conditions is \emph{not} met: $|V_\delta(G)|<\xi n$; the quantity $n-|V_\delta(G)|$ is odd, and; $G$ has no middle-degree vertices. We divide our argument into two subcases according to whether the first of these conditions is met or not.\\

\noindent \textbf{Case 2a: $|V_\delta(G)| \geq \xi n$}. 

Let $G'=G$ unless $\Delta(G)$ is odd and $n$ is odd. In this latter case, note that $G$ has a Hamilton cycle by Theorem \ref{thm:dirac} and by taking every second edge in this cycle we can pick a near-perfect matching $F$ of $G$ where the only unsaturated vertex is some $v_1$ with $d_G(v_1)=\delta(G)$. Then let $G'=G-F$, and note that $\Delta':=\Delta'(G)=\Delta(G)-1$ in this case.

Suppose $V(G')=\{v_1, \ldots, v_n\}$ with $d_{G'}(v_1) \le d_{G'}(v_2) \le \ldots \le d_{G'}(v_n)$. Let $d_i=\Delta'-d_{G'}(v_i)$. 
Then we have 
$$\sum_{i=1}^n d_i = \sum_{i=1}^n (\Delta'-d_{G'}(v_i))=n\Delta'-2e(G'),$$
which is an even number since $n\Delta'$ is always even (by our definition of $G'$). Moreover, $|V_\delta(G)| \ge \xi n\gg \tfrac{n}{n_0}\geq 1$, so in particular $d_1=d_2$ and hence $d_1\leq \sum_{i=2}^n d_i$. We can therefore apply Theorem~\ref{thm:degree-sequence-multigraph} to construct a multigraph  $L$ on $\{v_1, \ldots, v_n\}$
with degree sequence $(d_1, \ldots, d_n)$.  

Since $|V_\delta(G)| \geq \xi n$ we know that $|V_\delta(G)|> |U_{\xi}(G)|$, which means that $\Delta(G)-\delta(G) < \xi n$. By Theorem~\ref{thm:chromatic-index}, we have 
 $$\chi'(L)\leq \Delta(L)+\mu(L)\leq 2\Delta(L)\leq 2(\Delta(G)-\delta(G))< 2\xi n.$$
 Setting $k_0= \lceil 2\xi n \rceil$ and applying Theorem \ref{lem:equa-edge-coloring}, we can partition $E(L)$ into a set of $k_0$ matchings each with size at most $$\left\lceil\frac{e(L)}{k_0}\right\rceil\leq \left\lceil\frac{n (\Delta(G)-\delta(G))/2}{k_0}\right\rceil \leq \left\lceil\frac{n(\xi n)/2}{2\xi n}\right\rceil \leq  \left\lceil\frac{n}{4}\right\rceil.$$ 
Let us now partition each of these $k_0$ matchings into matchings of size at most $\tfrac{1}{2}\xi^{1/2}n$. Let $M_1, \ldots, M_k$ be the resulting set of matchings in $L$, with 
$$k\leq  2\xi n \left\lceil \frac{\lceil n/4\rceil}{ \tfrac{1}{2}\xi^{1/2}n} \right\rceil \leq 4\xi n \left( \frac{ n/4}{ \tfrac{1}{2}\xi^{1/2}n} \right) = 2\xi^{1/2} n. $$

Our aim now will be to  define spanning subgraphs $G'_0,G'_1,\ldots, G'_k$ of $G'$ such that
 \begin{enumerate}[(a)]
	 	\item $G'_0:=G'$ and $G'_i:=G'_{i-1}- E(F_i)$ for $i\geq 1$.
	 	\item $F_i$ is a spanning linear forest in $G'_{i-1}$ whose leaves are precisely the vertices in $V(M_i)$.
	 \end{enumerate}
Let $G'_0=G'$ and suppose that for some $i\in [1,k]$, we have already defined $G'_0,G'_1,\ldots,G'_{i-1}$ and $F_1,\ldots,F_{i-1}$. Since $$\Delta(F_1\cup\ldots \cup F_{i-1})\leq 2(i-1)\leq 2(k-1)\leq 4\xi^{1/2} n,$$
it follows that 
\begin{equation}\label{eqn:minGprime}
\delta(G'_{i-1})\geq (1+\ve)n/2-4\xi^{1/2} n \geq (1+0.9\ve)n/2.
\end{equation}
Since $M_i$ has size at most $\xi^{1/2} n/2$,  we can apply Lemma~\ref{lem:matching} to $G'_{i-1}$ and $M_i$ to obtain a spanning linear forest $F_i$ in $G'_{i-1}$ whose leaves are precisely the vertices in $V(M_i)$. Set $G'_i=G'_{i-1}-E(F_i)$.

We claim that $G'_k$  is regular. Consider any vertex $u\in V(G'_k)$. For every $i\in [1,k]$, $d_{F_i}(u)=1$ if $u$ is an endvertex of some edge of $M_i$ and $d_{F_i}(u)=2$ otherwise. Since $M_1,M_2,\ldots,M_k$ partition $E(L)$, $u$ is the endpoint of exactly $d_L(u)$ of these matching edges, so
$$\sum\limits_{i=1}^{k}d_{F_i}(u)=2k-d_{L}(u)=2k-(\Delta'-d_{G'}(u)).$$ 
Thus $$d_{G'_{k}}(u)=d_{G'}(u)-\sum\limits_{i=1}^{k}d_{F_i}(u)=d_{G'}(u)-2k+(\Delta'-d_{G'}(u)) =\Delta'-2k.$$
Since the above holds for any $u\in V(G'_k)$, we see that $G'_k$ is indeed regular.

By (\ref{eqn:minGprime}) and by our initial assumptions,
$$(1+0.9\ve)n/2\leq \Delta(G'_k) \leq \Delta(G)< \tfrac{3}{4}{n}.$$
The matching $M$ we found earlier is in $\overline{G}\subseteq \overline{G'_k}$, so since $G'_k$ is regular, $G'_k, M$ satisfy all the conditions of 
Theorem~\ref{thm:reduced}(1), so $(G'_k)^M$ has a good edge-coloring, which uses $\Delta(G_k')+2$ colors.  Since $\Delta(G_k')=\Delta'-2k$, and each of the $k$ linear forests is edge 2-colorable, this translates to a good edge-coloring of $(G')^M$ which uses $\Delta'+2$ colors. If $G=G'$ this is exactly our desired result. Otherwise, $G'=G-F$ with $F$ a matching and $\Delta'=\Delta(G)-1$. So again we get our desired result by using one additional color on the edges of $F$.\\

\noindent \textbf{Case 2b: $|V_\delta(G)| < \xi n$.}

Consider the graph $G'=G-V_\delta(G)$ (which is nontrivial since $G$ is not regular). Then 
$$\delta(G')> \delta(G)- |V_\delta(G)| > (1+\varepsilon) \tfrac{n}{2} -\xi n \geq \tfrac{n}{2} + n(\tfrac{\varepsilon}{2}-\xi)>\tfrac{n}{2},$$
with the last inequality following from $\xi \ll \varepsilon$.
So $G'$ has a Hamilton cycle by Theorem \ref{thm:dirac}. If $G'$ has even order then taking every second edge in this Hamilton cycle gives a perfect matching $M'$ of $G'$; otherwise we get a near-perfect matching $M'$ of $G'$, where $M'$ unsaturates one vertex that has  degree less than $\Delta(G)$ in $G$ if such a vertex exists.
Let $G_1$ be the graph obtained from $G$ by deleting $M'$.

Note that $G_1$ is an $n$-vertex graph with $\delta(G_1)\ge \frac{1}{2}(1+\varepsilon)n$. We may assume that $\Delta(G_1)=\Delta(G)-1$, unless $G$ has no middle-degree vertices and $|V(G')|$ is odd. Of course, in the latter case $G, M$ meet all the assumptions of Theorem \ref{thm:reduced} (1) and hence we are done. So we may indeed assume that $\Delta(G_1)=\Delta(G)-1$. 

If $|V_\delta(G_1)| < \xi n$, then we can repeat the above process and indeed continue this process of deleting matchings until we either get some $n$-vertex graph $G_t$ where $|V_\delta(G_t)|\geq \xi n$, or we get some $n$-vertex graph $G_i$ where $|V_\delta(G_i)| < \xi n$, $G_i$ has no middle-degree vertices, and $|V(G_i)|$ is odd. Note that in both scenarios, $\delta(G_i), \delta(G_t)\geq \frac{1}{2}(1+\varepsilon)n$, since we only delete matchings from $G'$.

Suppose first that we get the graph $G_i$ described in the above paragraph. Then $G_i, M$ meet all the conditions of Theorem \ref{thm:reduced}(1) (note $M\subseteq \overline{G}\subseteq \overline{G_i}$), so $G_i^M$ has a good coloring, which uses $\Delta(G_i)+2$ colors. But $\Delta(G_i)=\Delta(G)-i$, and so by using $i$ additional colors on the $i$ deleted matchings we get a good coloring of $G^M$. 

We may now assume that we get the graph $G_t$ described in the above paragraph, where $|V_\delta(G_t)|\geq \xi n$. We can get a good coloring of $G_t^M$: this is via Case 1 if $|U_\xi(G_t)|\geq \xi n$, and via Case 2a if not. The good coloring of $G_t^M$ uses $\Delta(G_t)+2$ colors.  Since $\Delta(G_t)=\Delta(G)-t$, we can use $i$ additional colors on the $i$ deleted matchings to a good coloring of $G^M$. 
\end{proof}

\section{An edge-coloring lemma}

Given a graph $G$ and an integer $k\ge 0$, a partial $k$-edge-coloring of $G$ is an edge-coloring of some subgraph $G'\subseteq G$. For any partial $k$-edge-coloring, $\varphi$ of $G$, and any vertex $v\in V(G)$, denote by $\pbar(v)$ the set of colors \emph{missing} at $v$, that is, the set of all colors in $\{1, 2, \ldots, k\}$ that are not used on an edge incident to $v$ under $\varphi$. For a vertex set $X\subseteq V(G)$,  define  $\pbar(X)=\bigcup _{v\in X} \pbar(v)$ to be the set of missing colors of $X$. Consider a pair of distinct colors $\alpha, \gamma \in\{1, \ldots, k\}$, and consider some maximal $(\alpha, \gamma)$-alternating path or cycle $P$ in $G$. Note that by switching the colors $\alpha, \gamma$ on $P$ we get a new partial $k$-edge-coloring $\varphi'$ of $P$ where exactly the same set of edges in $G$ have been colored; we denote this $\varphi'$ by $\varphi/P$.

Let $G$ be a graph and let $\varphi$ be a partial $k$-edge-coloring of $G$.  A \emph{multifan} centered at some vertex $r$ with respect to $\varphi$ is a sequence $F_\varphi(r,s_0: s_p)=(r, e_0, s_0, e_1, s_1, \ldots, e_p, s_p)$ with $p\geq 0$ consisting of  distinct edges $e_0, \ldots, e_p$  with $e_i=rs_i$ for all $i$,  where $e_0$ is left uncolored under $\varphi$,   
and \emph{for every edge $e_i$ with $i\in \{1, \ldots, p\}$,  there exists  $j\in \{0, \ldots, i-1\}$ such that 
		$\varphi(e_i)\in \pbar(s_j)$.} 
A subsequence $(s_0=s_{\ell_0}, s_{\ell_1},s_{\ell_2}, \ldots, s_{\ell_t})$ with the property that $\varphi(e_{\ell_i})= \alpha\in \pbar(s_{\ell_{i-1}})$ for each $i\in \{1,\ldots, t\}$, is called a \emph{linear sequence} of $F_\varphi(r,s_0: s_{p})$. 
Given such a linear sequence, and given $h \in \{1, \ldots, t\}$, we \emph{shift from $s_{\ell_h}$ to $s_0$} by recoloring edge $e_{\ell_{i-1}}$ with $\varphi(e_{\ell_{i}})$ for all $i\in\{1, \ldots, h\}$. 
Note that the resulting coloring  is a partial $k$-edge-coloring where $e_0$ is colored and $e_{\ell_h}$ is not. Note that these linear subsequences and shifts have been implicitly used in many papers (see e.g.~\cite{MR4694336}); the definition of multifan first appeared in the book of Stiebitz et al.~\cite{StiebSTF-Book} as a generalization of classic work of Vizing~\cite{Vizing-2-classes}.

The proof of the following lemma is very similar to that of Lemmas 3 and 4  in~\cite{DMS}. Here we use our notation of $E_G(S, T)$ as before, except when $S=\{u\}$ and $T = \{v\}$, we write $E_G(u,v)$. Note that in this lemma we are dealing with multigraphs so $e_G(u,v)$ may be larger than $1$. We define the \emph{multiplicity of $v$} as the quantity, $\mu_G(v) = \max\{e_G(u,v): u\in V(G)\}$ and with the \emph{multiplicity of $G$} being the quantity $\mu(G) = \max\{\mu_G(v): v\in V(G)\}$.

\begin{lemma}\label{lemma:matching-extension}
Let $G$ be a multigraph with $\mu(G) \le 2$,  and let $J_0\subseteq E(G)$ be composed of a matching and the edges of a star centered at a vertex $x$, with the star comprising of all of $x$'s incident edges in $G$ (none of which are from the matching). Moreover, suppose that if $u\neq x$, then $e_G(u, v)=2$ for at most one vertex $v\in V(G)$. Let $J\subseteq E(G)$  with $J_0\subseteq J$ be such that $G-J$ is simple, and suppose that   $G[J]$ has a $k_0$-edge-coloring $\varphi_0$ for some $ \Delta(G)\ge k_0 \ge |J_0|$ for which each edge in $J_0$ receives a different color. Then the following statements hold:
    \begin{enumerate}[(a)]
        \item For any $k\ge \Delta(G)+4$, the edge-coloring   $\varphi_0$ can be  modified  to a $k$-edge-coloring of $G$ such that each edge of $J_0$ receives a different color.  
        \item Suppose that   $G-x$ is bipartite and simple, and $(X,Y)$ is  a bipartition of $G-x$.  If  $\Delta(G[J]-x) \le 1$ and all the vertices of $X$ have degree less than $k$ in $G$, for some $k\geq \Delta(G)$,  then $\varphi_0$ can be  modified  to a $k$-edge-coloring of $G$ such that each edge of $J_0$ receives a different color. 
    \end{enumerate}
\end{lemma}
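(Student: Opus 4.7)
Plan: The proof extends $\varphi_0$ to a full $k$-edge-coloring of $G$ by coloring the edges of $E(G)\setminus J$ one at a time, maintaining at every stage the invariant that the edges of $J_0$ carry pairwise distinct colors. The tool is the classical Vizing-style multifan together with Kempe-chain swaps, deployed in the same way as Lemmas 3 and 4 of~\cite{DMS}. A useful preliminary remark is that because $J_0$ already contains every edge incident to $x$, no uncolored edge touches $x$; hence every multifan we build has its center $r\neq x$, and such an $r$ is incident to at most one $J_0$-edge (a matching edge). This bounds how badly a local recoloring can disturb the invariant.

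For part~(a), let $\varphi$ be a partial extension of $\varphi_0$ meeting the invariant and let $e_0=uv$ be an uncolored edge. Since $k\ge\Delta(G)+4$, both $u$ and $v$ have at least four missing colors; if $\bar\varphi(u)\cap\bar\varphi(v)\neq\emptyset$, color $e_0$ with a common missing color. Otherwise build a multifan $F_\varphi(u,v:s_p)$ at $u$ and look for a linear subsequence ending at some $s_j$ with $\bar\varphi(s_j)\cap\bar\varphi(u)\neq\emptyset$; if one exists, shift from $s_j$ to $v$ and color $e_0$. If no such subsequence exists, pick $\alpha\in\bar\varphi(u)$ and $\gamma\in\bar\varphi(s_p)$, flip the maximal $(\alpha,\gamma)$-Kempe chain at $s_p$, and re-enter the fan argument. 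Because $\mu(G)\le 2$ and $k\ge\Delta(G)+\mu(G)+2$, termination is guaranteed exactly as in Vizing's proof that $\chi'(G)\le\Delta(G)+\mu(G)$.

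For part~(b) the degree slack is gone ($k$ may equal $\Delta(G)$), so the work is done by the bipartite structure of $G-x$. Every uncolored edge lies in $G-x$, and $\Delta(G[J]-x)\le 1$ means each vertex of $V(G)\setminus\{x\}$ is incident to at most one colored $J$-edge inside $G-x$ (plus possibly its edge to $x$). The condition $d_G(v)<k$ for $v\in X$ ensures that every $X$-vertex has at least one missing color at every stage of the extension. I would again extend edge by edge via multifans, but always root the fan at the endpoint lying in $X$ (which is possible since every uncolored edge has exactly one endpoint in $X$). Because $G-x$ is bipartite, any $(\alpha,\gamma)$-alternating path from an $X$-vertex alternates strictly between $X$ and $Y$, so swapping it creates no conflict at $x$ and leaves the $X$-side invariant; combined with the guaranteed missing color at $X$-vertices, this makes the multifan argument go through.

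The main obstacle in both parts is preserving the distinctness of the $J_0$-colors through the recoloring steps. Since a shift in a multifan or a Kempe-chain swap at $r\neq x$ can change the color of at most one $J_0$-edge incident to $r$, the delicate point is to show that when that one edge is recolored, its new color does not coincide with any other $J_0$-color in use. In part~(a) the four-color slack lets us pre-select a starting color $\alpha\in\bar\varphi(r)\setminus C_0$ (where $C_0$ is the current set of $J_0$-colors), so the fan can be built and shifted entirely through ``safe'' colors. In part~(b) the same effect must be obtained without slack, by routing alternating paths through $Y$-vertices and using the bipartite König-type reasoning together with the assumption $\Delta(G[J]-x)\le 1$ to ensure that the at-most-one colored $J$-edge at each non-$x$ vertex never interferes with the shift. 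I expect this bookkeeping on the $J_0$-invariant---rather than the fan or Kempe step itself---to be where the bulk of the technical work lives.
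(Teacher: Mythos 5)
Your high-level framework (extend the coloring via Vizing multifans and Kempe swaps, maintain the $J_0$-distinctness invariant) is the right one, and your preliminary observation that every uncolored edge avoids $x$ — hence every fan is centered at a vertex incident to at most one $J_0$-edge — matches the paper's setup. But the mechanism you propose for \emph{preserving} the $J_0$-invariant through shifts and swaps does not work, and this is precisely where the real content of the lemma lives.

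Your ``safe starting color'' idea — pick $\alpha\in\pbar(r)\setminus C_0$ so that ``the fan can be built and shifted entirely through safe colors'' — fails on two counts. First, $\pbar(r)\setminus C_0$ can be empty: $|\pbar(r)|\ge 4$ but $|C_0|=|J_0|$ can be as large as $\Delta(G)$, and up to $|J_0|-1$ colors of $C_0$ may be missing at $r$. Second and more fundamentally, a fan shift recolors the actual fan edges $e_{\ell_0},\ldots,e_{\ell_{h-1}}$ incident to $r$ with the (fixed) colors $\varphi(e_{\ell_1}),\ldots,\varphi(e_{\ell_h})$; choosing the starting color $\alpha$ has no bearing on whether one of those fan \emph{edges} is in $J_0$, and recoloring (or uncoloring) a $J_0$-edge breaks the invariant. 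The paper's fix is different: it restricts attention to the sub-multifan $F_1\subseteq F$ induced by all linear sequences of $F$ that contain no $J$-edge, so shifts within $F_1$ never touch $J_0$. You do not have this idea. Similarly, for the Kempe swaps you need to argue that among the vertex-disjoint maximal $(\alpha,\gamma)$-paths leaving $\{u,y_1,\ldots\}$ there is one containing no $J_0$-edge — this follows because at most two $J_0$-edges carry either of the two colors, while (in case (a)) there are at least three such paths. You gesture at bookkeeping but never make this count, and without it the swap can violate the invariant. In part (b) the paper additionally exploits that an $(\alpha,\gamma)$-path joining two fan vertices would close an odd cycle in bipartite $G-x$ unless it passes through $x$, forcing it to use two $J_0$-edges — again enabling the ``at least one safe path'' count. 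Your claim that bipartiteness makes a swap ``create no conflict at $x$'' is the opposite of what happens; the point is that paths threatening the invariant are exactly the ones through $x$, and one shows a safe alternative exists. Finally, both parts conclude with a counting contradiction on $|\pbar(V(F_1))|$ (comparing the lower bound from Claims 2 and 3 with the upper bound $k-(d_{G'}(u)-t-2)$ or $k-(d_{G'}(u)-t-1)$), which is absent from your sketch; neither part is a straightforward ``Vizing termination'' argument.
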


\proof 
Let $\Delta=\Delta(G)$ and $\varphi$ be a modification of $\varphi_0$ using $k$ colors. Note, by modification we never uncolor edges in $J$, but possibly changing its colors. Moreover, we choose $\varphi$ with the following properties:
\begin{enumerate}
\item[(P1)] every edge in $J_0$ receives a different color, and;
\item[(P2)] subject to (P1), $\varphi$ assigns colors to as many edges as possible.
\end{enumerate}
If every edge in $G$ is colored by $\varphi$ then we are done, so we may assume there is some edge $e_0$ that is uncolored by $\varphi$. Since $e_0$ is uncolored, $e_0\notin J$, so by assumption $e_0$ has endpoints $u,v$ for some $u, v\in V(G)\setminus\{x\}$. In Case (b) we may further assume that $v\in X$. Let $F$ be a maximal multifan centered at $u$. Note that since $u\neq x$, there is at most one vertex $w$ with $e_G(u,w)=2$, and $u$ is incident with at most one edge from $J_0$.  In case (a) we know $k\geq \Delta+4$, so $|\pbar(v)|\geq 5$; in case (b) we know that $k\geq \Delta$ and $v\in X$ so $|\pbar(v)|\geq 2$. In either case, 
there exist  linear sequences of $F$ containing no edge of $J$. Let $F_1\subseteq F$ be a multifan induced by all the linear sequences of $F$ that don't contain edges of $J$; say  $F_1=(u, uw_0, w_0,  uw_1, w_1, \ldots,  uw_t, w_t)$,  where $w_0=v$.

\begin{claim}\label{uwi} Suppose $F_2\subseteq F_1$ corresponds to the union of any number of linear sequences of $F$. Then $\pbar(u)\cap \pbar(w_i) =\emptyset$ for any $w_i\in V(F_2)$.
\end{claim}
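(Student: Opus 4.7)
The plan is a standard Vizing-style shift-and-color argument, using property (P2) as the source of contradiction. Assume for contradiction that some color $\alpha \in \pbar(u) \cap \pbar(w_i)$ for some $w_i \in V(F_2)$. Since $F_2$ is a union of linear sequences of $F$, there is a linear sequence $(w_0 = w_{\ell_0}, w_{\ell_1}, \ldots, w_{\ell_h} = w_i)$ inside $F_2$ that ends at $w_i$, with $\varphi(uw_{\ell_j}) \in \pbar(w_{\ell_{j-1}})$ for each $j$.

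I would apply the shift from $w_i$ to $w_0$ along this linear sequence, exactly as defined in the paragraph introducing multifans. By the general property of such shifts, the result is a valid partial $k$-edge-coloring $\varphi'$ in which $e_0 = uw_0$ has acquired a color and $uw_i$ is now uncolored. A short verification shows that the shift preserves $\alpha$ as a missing color at both $u$ and $w_i$: at $u$ the colors merely rotate among the edges $uw_{\ell_0}, \ldots, uw_{\ell_h}$, so the set of colors present at $u$ is unchanged; at $w_i$ only the edge $uw_i$ loses its color. We may therefore color $uw_i$ with $\alpha$, producing a $k$-edge-coloring that colors strictly more edges than $\varphi$.

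To turn this into a contradiction with (P2), I must verify that property (P1) still holds after the shift and the new assignment. This is the one point requiring care, and it is exactly the place where the definition of $F_1$ (and hence $F_2$) matters: $F_1$ is built from those linear sequences of $F$ that contain no edge of $J$. So every edge $uw_{\ell_j}$ touched by the shift, as well as the edge $uw_i$ that is finally colored $\alpha$, lies outside $J$, and in particular outside $J_0$. The colors of $J_0$ are therefore untouched and remain pairwise distinct, giving (P1) and completing the contradiction.

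The main obstacle is precisely this last verification. If the linear sequence reaching $w_i$ were permitted to contain an edge of $J$, then the shift could recolor an edge of $J_0$ and destroy the distinctness of colors on $J_0$; the restriction from $F$ to $F_1 \supseteq F_2$ was introduced for exactly this reason, and invoking it is the key bookkeeping step of the argument.
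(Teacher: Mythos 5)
Your proposal is correct and takes essentially the same approach as the paper: shift along the linear sequence reaching $w_i$, then color $uw_i$ with the common missing color, and invoke the fact that $F_1$ (hence $F_2$) was defined to exclude edges of $J$ so that (P1) survives, giving the contradiction with (P2). Your additional observation that $\alpha\in\pbar(u)$ guarantees $\alpha$ is never one of the colors rotated by the shift (so it remains missing at $u$ and at $w_i$ even if $w_i$ appears more than once in the sequence, as is possible with $\mu(G)\le 2$) is a careful verification the paper leaves implicit.
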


\proof[Proof of claim] Suppose for a contradiction that the claim is false. If $i=0$, then $w_i=v$ and by coloring $uv$ any common missing color, we immediately contradict (P2). Thus, we may assume $i\geq 1$. To this end, suppose that $\pbar(u)\cap \pbar(w_i) \ne \emptyset$ for some $i$. Then there exists a linear sequence $(u, w_0, w_{i_1}, \ldots, w_{i_s})$ of $F_2$ for which $w_{i_s}=w_i$. Shift $(u, w_0, w_{i_1}, \ldots, w_{i_s})$ 
from $w_{i_s}$ to $w_0$ and then color the edge $uw_i$ with any color in $\pbar(u)\cap \pbar(w_i)$. The resulting partial $k$-edge-coloring  $\varphi'$ of $G$ still maintains the property that every edge in $J_0$  colored differently, since we excluded the edges of $J_0$ from $F_1$ (and $F_2$). Hence $\varphi'$ contradicts (P2).
\qed

In statement (b), as $G-x$ is bipartite,  for any  distinct $\alpha, \beta  \in [1,k]$ with $\alpha \in \pbar(u), \beta \in \pbar(w_i)$
for some $i\in [0,t]$,  if there is an  $(\alpha,\beta)$-alternating path joining $u, w_i$, then it creates an odd cycle with $uw_i$, and hence we know  that the path must contain $x$ (as an internal vertex). Moreover, this means that the path contains two edges from $J_0$.
Similarly, if $\beta \in \pbar(w_i)\cap \pbar(w_j)$ for 
distinct $i,j\in [1,t]$, and there is an $(\alpha,\beta)$-alternating path joining $w_i$ and $w_j$, then this creates an odd cycle with $uw_i, uw_j$, and hence we know that the path must contain $x$ (as an internal vertex). Again, we also get that such a path contains two edges from $J_0$.

\begin{claim}\label{gamma} For any color $\gamma \in [1,k]$,  we have the following statements hold. 
\begin{enumerate}[(1)]
    \item In statement (a),  there are at most  $3$  vertices from $\{w_0, \ldots, w_t\}$ that are all missing $\gamma$ under $\varphi$.
    \item In statement (b),  there is at  most  $1$  vertex from $\{w_0, \ldots, w_t\}$ that is  missing $\gamma$ under $\varphi$.
\end{enumerate}
\end{claim}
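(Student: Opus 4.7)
The plan is to prove both parts by contradiction, using Kempe chain swaps to produce configurations ruled out by the shift-and-color argument from Claim 1. Fix $\gamma \in [1,k]$ and suppose $m$ vertices $w_{i_1}, \ldots, w_{i_m}$ of $\{w_0, \ldots, w_t\}$ all miss $\gamma$. Since $e_0 = uv$ is uncolored, $|\pbar(u)| \geq 1$; pick any $\alpha \in \pbar(u)$. By Claim 1 we have $\gamma \notin \pbar(u)$, so $\alpha \neq \gamma$, and $\alpha \notin \pbar(w_{i_j})$ for each $j$. Consider the $(\alpha,\gamma)$-Kempe components of $\varphi$: each $w_{i_j}$ is an endpoint of an $(\alpha,\gamma)$-alternating Kempe path, and $u$ is an endpoint of its own such path $C_u$.

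The key mechanism is the following. If some $w_{i_j}$'s $(\alpha,\gamma)$-path $P$ satisfies $P \neq C_u$ and $x \notin V(P)$, then swapping colors on $P$ yields a coloring $\varphi'$ with $\alpha \in \overline{\varphi'}(u) \cap \overline{\varphi'}(w_{i_j})$. Property (P1) is preserved, since any two $J_0$-edges on $P$ simply exchange their distinct colors $\alpha$ and $\gamma$; edges incident to $u$ are untouched since $u \notin V(P)$; and the linear sequence from $w_0$ to $w_{i_j}$ in $F_1$ remains a valid linear sequence under $\varphi'$, because its intermediate vertices either lie off $P$ or are internal to $P$ (where both $\alpha$ and $\gamma$ remain in use, so the missing-color sets are unchanged). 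Then the Claim 1 shift-and-color argument---shifting the linear sequence from $w_{i_j}$ to $w_0$ and then coloring the freed edge $u w_{i_j}$ with $\alpha$---produces a coloring with one more colored edge while still satisfying (P1), contradicting (P2).

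It then remains to count the ``obstructed'' $w_{i_j}$'s whose $(\alpha, \gamma)$-path coincides with $C_u$ or contains $x$. Internal vertices of an $(\alpha,\gamma)$-path cannot miss $\gamma$, so each $w_{i_j}$ appears as an endpoint. The path $C_u$ has $u$ as one endpoint and so contributes at most one $w_{i_j}$, while since $x$ lies on at most one $(\alpha,\gamma)$-component, that single component contributes at most two $w_{i_j}$ endpoints; this yields $m \leq 1 + 2 = 3$, proving part (1). For part (2) the bipartite structure of $G-x$ tightens the bound: as shown just before the claim, any $(\alpha,\gamma)$-path joining $u$ to some $w_i$ missing $\gamma$, or joining two such $w_i$'s to each other, must contain $x$ internally (otherwise an odd cycle arises in the bipartite graph $G-x$). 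Hence all obstructions collapse into the unique $(\alpha,\gamma)$-component through $x$, and a case analysis on whether this component pairs $u$ with a single $w_{i_j}$ or two $w_{i_j}$'s together shows $m \leq 1$; in the two-$w_{i_j}$ case one swaps the $x$-path (which preserves (P1) via the $J_0$-edge exchange at $x$) and then shifts to whichever $w_{i_j}$ evades the technical issue described below.

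The main obstacle is verifying that the linear sequence from $w_0$ to $w_{i_j}$ remains a valid linear sequence under $\varphi'$: the condition $\varphi'(u s_{\ell_{r+1}}) \in \overline{\varphi'}(s_{\ell_r})$ can fail if some intermediate $s_{\ell_r}$ happens to be an endpoint of $P$ whose missing-color set has just changed. Since $\alpha \in \pbar(u)$ excludes $\varphi(u s_{\ell_{r+1}}) = \alpha$, the only problematic color is $\gamma$, and $u$ has at most one $\gamma$-colored edge, which lies in $C_u$ and not in the swapped $P$ whenever $P \neq C_u$; such conflicts are therefore highly restricted and can be resolved either by re-selecting the shift's terminus among the $w_{i_j}$'s using a shorter linear sub-sequence, or by choosing a different $\alpha \in \pbar(u)$. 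This technical balancing parallels the treatment in \cite{DMS} Lemmas 3 and 4.
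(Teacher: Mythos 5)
Your counting of ``obstructed'' vertices uses the wrong criterion, and this is where the argument breaks. You exclude from consideration only those $w_{i_j}$'s whose $(\alpha,\gamma)$-path coincides with $C_u$ or contains $x$, and for the remaining paths $P$ you claim (P1) is preserved because $J_0$-edges on $P$ ``simply exchange their distinct colors $\alpha$ and $\gamma$.'' But $J_0$ is a matching \emph{together with} the star at $x$, and the matching edges of $J_0$ need not be anywhere near $x$. A path $P$ avoiding $x$ can still contain a single matching $J_0$-edge colored $\alpha$; if the $J_0$-edge colored $\gamma$ (should one exist) lies off $P$, then after the swap two $J_0$-edges are colored $\gamma$ and (P1) is violated. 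So your $1+2$ count does not bound the set of paths on which the swap goes wrong, and the ``key mechanism'' can fail precisely on a path you classify as unobstructed. The paper avoids this by a different selection rule: with five vertices $u, y_1, \ldots, y_4$ as endpoints of maximal $(\alpha,\gamma)$-paths there are at least three such paths, and since at most one carries the $\alpha$-colored $J_0$-edge and at most one the $\gamma$-colored one, some path contains \emph{no} $J_0$-edge at all. That pigeonhole step is what guarantees (P1) survives the swap; your argument has no substitute for it.

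The second issue is the one you yourself flag at the end: preservation of the multifan (or linear sequence) under $\varphi'$. You concede the condition ``$\varphi'(u s_{\ell_{r+1}}) \in \overline{\varphi'}(s_{\ell_r})$ can fail if some intermediate $s_{\ell_r}$ is an endpoint of $P$'' and propose resolving it ``by re-selecting the shift's terminus'' or ``choosing a different $\alpha$,'' but you never show these options are available. The paper resolves this concretely by insisting $y_1$ be the \emph{first} vertex in the order $w_0, w_1, \ldots, w_t$ with $\gamma$ missing; then $F_1(u, w_0 : y_1)$ contains no edge colored $\gamma$ (and none colored $\alpha$, since $\alpha\in\pbar(u)$), so no vertex of that prefix other than possibly $y_1$ itself can be an endpoint of $P$, and the multifan structure is stable under the swap. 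Without this (or an equivalent) pinning-down, your proof is incomplete in exactly the spot you identify as the main obstacle.
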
 
	
\begin{proofc}  
(1) Suppose for a contradiction that such a color $\gamma$ exists.  Let  $y_1, y_2, y_3,  y_{4}$ be $4$  distinct vertices of $\{w_0, \ldots, w_t\}$ all of which are missing $\gamma$ under $\varphi$. Suppose, without loss of generality, that $y_1$ is the first vertex 
in the order $w_0, w_1, \ldots, w_t$ for which $\gamma\in \pbar(y_1)$.  In particular, this guarantees that $F_1(u, w_0:y_1)$ does not contain any edge that is colored with $\gamma$. 
Let $\alpha\in \pbar(u)$; we know that $\alpha\ne \gamma$ by Claim \ref{uwi}.

Consider the set $\mathcal{P}$ of maximal $(\alpha,\gamma)$-alternating
path containing at least one vertex from $\{u,y_1,...,y_4\}$. Since $\alpha\in \pbar(u)$ and $\gamma\in \cap_{i=1}^4\pbar(y_i)$, each component of $\mathcal{P}$ is a path where every vertex of $\{u,y_1,...,y_4\}$ is an endpoint of a path. Thus, there are at least $3$ paths in $\mathcal{P}$. By (P1), there is at most one edge of $J_0$ colored $\alpha$ and at most one edge of $J_0$ colored $\gamma$. Hence there must exist a path $P\in \mathcal{P}$ which does not contain any edges of $J_0$. We let $\varphi' = \varphi/P$ and consider two different cases depending on whether $u$ is an endpoint of $P$.

\noindent{\bf Case 1.1}:  \emph{The vertex $u$ is an endpoint of $P$.}

Assume first that $y_1$ is the other endpoint of $P$. Then in $\varphi'$, $u$ is missing $\gamma$, so in particular no $\gamma$-edges appear in $F_1$ (if there were any before, they must have been recolored to $\alpha$). However since these edges all occur after $y_1$ in the order of  $F_1$, and $y_1$ is missing $\alpha$ in $\varphi'$, $F_1$ is still a multifan in $G$ under $\varphi'$. However now Claim~\ref{uwi} is not satisfied for $\varphi'$, contradicting  our choice of $\varphi$.

We may now assume that $y_1$ is not the other endpoint of $P$. Here, $F_1(u, w_0:y_1)$ is a multifan with respect to $\varphi'$ containing no edges of $J_0$. But now the color $\gamma$ is missing at both $u$ and $y_1$ under $\varphi'$, so Claim~\ref{uwi} is not satisfied for $\varphi'$, contradicting our choice of $\varphi$.

\noindent{\bf Case 1.2}:  \emph{The vertex $u$ is not an endpoint of $P$.}   

In this case we know that $P$ does not contain any edges of $F_1$. 

First assume that $y_1$ is not an endpoint of $P$. Then $F_1$ is still a multifan with respect to $\varphi'$ containing no edges of $J_0$. However the color  $\alpha$ is missing at both $u$ and at least one other vertex from $\{y_2, y_3, y_4\}$ under $\varphi'$. This means that Claim~\ref{uwi} is not satisfied for $\varphi'$, contradicting our choice of $\varphi$.

We may now assume that $y_1$ is an endpoint of $P$. Now, $F_1(u, w_0:y_1)$ is a multifan with respect to $\varphi'$ containing no edges of $J_0$. But then $\alpha$ is missing at both $u$ and $y_1$ under $\varphi'$, so Claim~\ref{uwi} is not satisfied for $\varphi'$, contradicting our choice of $\varphi$.  This completes our proof of (1).

\noindent (2) Suppose for a contradiction that $y_1$ and $y_2$ are two distinct vertices of $\{w_0, \ldots, w_t\}$, both missing $\gamma$ under $\varphi$. Suppose, without loss of generality, that $y_1$ is the first vertex 
in the order $w_0, w_1, \ldots, w_t$ for which $\gamma\in \pbar(y_1)$. In particular, this implies that $F_1(u, w_0:y_1)$ does not contain any edge that is colored with $\gamma$.  

Let $\alpha\in \pbar(u)$; we know that $\alpha\ne \gamma$ by Claim~\ref{uwi}. Consider the set of  maximal alternating $(\alpha, \gamma)$-paths staring at $u, y_1$ or $y_2$. If these three paths are all disjoint, 
then one of them, say $P$,  does not contain any edge of $J_0$ (since there are at most two edges of $J_0$ with the colors $\alpha, \gamma$). On the other hand, suppose that two of the three paths are in fact the same path $P^*$. By our discussion prior to this claim, $P^*$ must contain $x$ as an internal vertex, and hence there are two $J_0$-edges colored $\alpha, \gamma$ on the path $P^*$. But then again, among the three paths, we can find one, say $P$, that does not contain any edges of $J_0$.

We let $\varphi'=\varphi/P$ and consider two different cases according to whether or not $u$ is an endpoint of $P$. Note that in either case, we have ensured that $P$ doesn't contain any edge of $J_0$, does not contain $x$, and only one of its endpoints is from $F_1$.

\noindent{\bf Case 2.1}:  \emph{The vertex $u$ is an endpoint of $P$.}

Since the other endpoint of $P$ is outside $F_1$, $F_1(u, w_0:y_1)$ is a multifan with respect to $\varphi'$ containing no edges of $J_0$. But now the color $\gamma$ is missing at both $u$ and $y_1$ under $\varphi'$, so Claim~\ref{uwi} is not satisfied for $\varphi'$, leading to a contradiction in our choice of $\varphi$.

\noindent{\bf Case 2.2}:  \emph{The vertex $u$ is not an endpoint of $P$.}   
In this case we know that $P$ does not contain any edges of $F_1$, and our proof is identical to that in Case 1.2 above. 
\qed

\vspace*{.2in}

We can now complete the proof of our lemma. For statement (a), Claims~\ref{uwi} and~\ref{gamma} tell us that
\begin{equation}\label{VF1a}
	|\pbar(V(F_1))|  \ge  |\pbar(u)|+ \tfrac{1}{3}\sum_{i=0}^t|\pbar(w_i)|.
\end{equation}
If we let $G'$ be the graph induced by all the colored edges in $G$, then $|\pbar(z)|\geq k-d_{G'}(z)$ for all $z\in V(G)$. Since $d_{G'}(z)\leq \Delta$, we know that $|\pbar(z)|\geq 4$ for any $z$. Since the edge $uw_0$ is uncolored, we further know that $|\pbar(w_0)|\geq 5$ (and so $t\ge 3$). So from (\ref{VF1a}), we get
\begin{equation}\label{pbarLbound}
|\pbar(V(F_1))|  \ge  (k-d_{G'}(u))+ \tfrac{1}{3}(5)+ \tfrac{1}{3}(4t)>k+2+t-d_{G'}(u).
\end{equation}
On the other hand, we know that there are exactly $t$ colored edges in $F_1$,  so there are exactly $d_{G'}(u)-t$ colored edges incident to $u$ that are not included in $F_1$. At most two of these excluded colored edges could be in $J$ (since $u\neq x$) and by the assumption of the lemma, while all the rest,     under $\varphi$,  must be colored by a color not in $\pbar(\{w_0, \ldots, w_t\})$, by (P2) and by definition of $F_1$. Of course, by virtue of being incident to $u$, none of these excluded colors are in $\pbar(u)$ either. Hence we have found a set of at least $d_{G'}(u)-t-2$ colors from $[1,k]$ that are not in $\pbar(V(F_1)$. So
$$|\pbar(V(F_1))| \leq k-(d_{G'}(u)-t-2)=k+2+t-d_{G'}(u),$$
contradicting (\ref{pbarLbound}).

For statement (b),
Claims~\ref{uwi} and~\ref{gamma} tell us that
\begin{equation}\label{VF1}
	|\pbar(V(F_1))|  \ge  |\pbar(u)|+ \sum_{i=0}^t|\pbar(w_i)|. 
\end{equation}
If we let $G'$ be the graph induced by all the colored edges in $G$, then $\pbar(z)\geq k-d_{G'}(z)$ for all $z\in V(G)$. 
We further argued that $|\pbar(w_0)|\geq 2$ earlier in this proof. Since $w_i\in X$ by the assumption that $u\in Y$, we know that $d_{G'}(w_i) \le d_G(w_i) \le k-1$. Thus $|\pbar(w_i)|\geq 1$ for all other $i$. So from (\ref{VF1}), we get
\begin{equation}\label{pbarLbound1}
	|\pbar(V(F_1))|  \ge k-d_{G'}(u)+ 2+ t.
\end{equation}

On the other hand, we know that there are exactly $t$ colored edges in $F_1$,  so there are exactly $d_{G'}(u)-t$ colored edges incident to $u$ that are not included in $F_1$. At most one of these excluded colored edges could be in $J_0$ (since $u\neq x$ and $\Delta(G[J]-x) \le 1$), while all the rest must have colors that are not in $\pbar(\{w_0, \ldots, w_t\})$, by (P2) and by definition of $F_1$. Of course, by virtue of being incident to $u$, none of these excluded colors are in $\pbar(u)$ either. Hence we have found a set of at least $d_{G'}(u)-t-1$ colors from $[1,k]$ that are not in $\pbar(V(F_1))$. So we get 
$$|\pbar(V(F_1))| \leq k-(d_{G'}(u)-t-1)< k-d_{G'}(u)+ 2+ t,$$
contradicting~\eqref{pbarLbound1}. 
\end{proofc}

\section{Proof of Theorem 2}\label{sec:Theorem1}

In this section we finally prove Theorem \ref{thm:reduced}. In our proof use the following two theorems about bipartite graphs.

\begin{lemma}[Plantholt and Shan \cite{PS2023}]\label{lem:matching-in-bipartite}
	Let $G[X,Y]$ be bipartite graph with $|X|=|Y|=n$. Suppose $\delta(G)=t$ 
for some $t\in [1,n]$, 
	and except at most $t$ vertices all other vertices of $G$ have degree 
	at least $n/2$ in $G$. Then 
	$G$ has a perfect matching. 
\end{lemma}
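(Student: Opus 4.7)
The plan is to invoke Hall's marriage theorem: since $|X|=|Y|=n$, a perfect matching exists if and only if $|N(S)| \geq |S|$ for every $S \subseteq X$. I would argue by contradiction, supposing some $S \subseteq X$ with $T := N(S)$ violates Hall's condition, so $|T| < |S|$. Let $L$ denote the set of vertices in $G$ of degree less than $n/2$; the hypothesis tells us $|L| \leq t$, and every vertex outside $L$ has degree at least $n/2$. The strategy is to combine this threshold with the minimum-degree bound $\delta(G) \geq t$, splitting into two cases according to whether $S$ contains a vertex of large degree.

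If every vertex of $S$ lies in $L$, then $|S| \leq |L| \leq t$, and picking any $v \in S$ yields $|T| \geq d(v) \geq t \geq |S|$, contradicting $|T| < |S|$. Otherwise some $v \in S$ has $d(v) \geq n/2$, so $|T| \geq n/2$ and hence $|S| > n/2$, giving $|X \setminus S| < n/2$. Each $w \in Y \setminus T$ has $N(w) \subseteq X \setminus S$, so $d(w) \leq |X \setminus S| < n/2$, forcing $w \in L$. Thus $|Y \setminus T| \leq |L| \leq t$, which yields $|T| \geq n - t$ and hence $|S| \geq n - t + 1$ and $|X \setminus S| \leq t-1$. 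Since $|T| < |S| \leq n$, the set $Y \setminus T$ is nonempty; any $w$ in it satisfies $d(w) \leq |X \setminus S| \leq t - 1$, contradicting $\delta(G) \geq t$.

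The main obstacle, to my mind, is finding the right way to marry the two degree hypotheses. The $n/2$-threshold (valid for all but at most $t$ vertices) is what bounds $|Y \setminus T|$ in the ``large-$S$'' regime and lets us squeeze $|X \setminus S| \leq t-1$; only after that can the minimum-degree bound $t$ be cashed in for the final contradiction. A one-shot Hall-style argument using only one of the two bounds would handle only the ``small-$S$'' or only the ``large-$S$'' case, but not both simultaneously.
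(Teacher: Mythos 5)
Your proof is correct. Since this paper simply imports Lemma~\ref{lem:matching-in-bipartite} from Plantholt and Shan~\cite{PS2023} without reproving it, there is no in-paper argument to compare against; your Hall-theorem approach, splitting on whether a violating set $S$ meets the large-degree vertices and using the $n/2$-threshold to trap $Y\setminus N(S)$ inside the small exceptional set before invoking $\delta(G)=t$ for the final contradiction, is the natural and standard route for this kind of defect-Hall statement.
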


\begin{theorem}[K\"{o}nig \cite{MR1511872}]\label{thm:konig}
	Every bipartite multigraph $G$ satisfies $\chi'(G)=\Delta(G)$. 
\end{theorem}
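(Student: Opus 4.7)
My plan is to prove K\"onig's theorem by induction on the number of edges, using a Kempe-chain argument that exploits bipartiteness. The lower bound $\chi'(G) \ge \Delta(G)$ is immediate, so the content is the upper bound $\chi'(G) \le \Delta(G)$.

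For the induction, remove an arbitrary edge $e = uv$; by the inductive hypothesis, $G - e$ has a proper $\Delta(G)$-edge-coloring $\varphi$. At each of $u$ and $v$ at most $\Delta(G) - 1$ colors are used, so each is missing at least one color. If some color $\gamma$ is missing at both $u$ and $v$, color $e$ with $\gamma$ and we are done. Otherwise pick a color $\alpha$ missing at $u$ (hence present at $v$) and a color $\beta$ missing at $v$ (so $\alpha \neq \beta$). Consider the maximal $(\alpha, \beta)$-alternating path $P$ in $G - e$ starting at $v$ via its $\alpha$-edge; this is indeed a path rather than a closed walk, since a returning cycle would require a $\beta$-edge at $v$, but $\beta$ is missing at $v$.

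The key claim is that $u \notin P$. Write $u \in X$ and $v \in Y$ for the bipartition $(X, Y)$ of $G$. If $u$ were the other endpoint of $P$, then $P$ has odd length (since $u$ and $v$ lie in opposite parts), and because $P$ alternates starting with an $\alpha$-edge at $v$, the final edge of $P$ at $u$ would be an $\alpha$-edge, contradicting that $\alpha$ is missing at $u$. And $u$ cannot be an internal vertex of $P$ either, since an internal vertex of an $(\alpha, \beta)$-alternating path is incident to both an $\alpha$- and a $\beta$-edge of $P$, again contradicting $u$'s missing color $\alpha$. Swapping $\alpha$ and $\beta$ along $P$ yields a proper $\Delta(G)$-edge-coloring $\varphi'$ of $G - e$ in which $v$ now misses $\alpha$ (its original $\alpha$-edge along $P$ is now colored $\beta$), while the colors at $u$ are unchanged. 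We then color $e$ with $\alpha$ to finish the inductive step.

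The crucial use of bipartiteness is in forcing the Kempe chain $P$ to avoid $u$: this fails in the presence of odd cycles, and is precisely why Vizing's theorem gives only $\chi'(G) \le \Delta(G) + 1$ in the general non-bipartite setting. The argument extends verbatim to multigraphs, since we only use the bipartite structure and never any simplicity of edges.
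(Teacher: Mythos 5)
The paper does not prove this statement—it cites K\"onig's 1916 result and uses it as a black box—so there is no in-paper proof to compare against. Your proof is correct: it is the standard Kempe-chain argument, and you have handled the two small points that sometimes get glossed over. First, you correctly observe that the $(\alpha,\beta)$-component containing $v$ is a path (not a cycle) because $\beta$ is missing at $v$, so $v$ has degree one in the $(\alpha,\beta)$-subgraph. Second, the bipartite parity argument ruling out $u$ as the far endpoint is right: an alternating path from $v\in Y$ to $u\in X$ has an odd number of edges, and since the first edge is $\alpha$-colored the last one would be too, contradicting that $\alpha$ is missing at $u$; and $u$ cannot be internal since that would force an $\alpha$-edge at $u$. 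Your remark that the argument carries over verbatim to multigraphs is also correct and worth making, since the paper invokes the theorem for bipartite \emph{multigraphs}: a proper edge-coloring still forces the $(\alpha,\beta)$-subgraph to have maximum degree $2$, and the only simplicity assumption one might worry about—parallel edges between $u$ and $v$—causes no trouble because $\alpha$ is missing at $u$, so no parallel copy of $uv$ carries color $\alpha$.
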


We'll additionally use the following two theorems about edge-colorings in multigraphs. The first of these is a simple folklore result (see eg.~\cite[Lemma 2.1]{MR2028248}), and the second is from \cite{DMS}.

\begin{lemma}[Parity Lemma]\label{lem:parity}
	Let $G$ be a multigraph and $\varphi $  be an edge-$k$-coloring  of $G$ for some integer $k\ge \Delta(G)$. 
	Then 
	$|\pbar^{-1}(i)| \equiv |V(G)| \pmod{2}$ for every color $i\in [1,k]$. 
\end{lemma}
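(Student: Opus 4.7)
The plan is to observe that for any fixed color $i \in [1,k]$, the set of edges colored $i$ under $\varphi$ forms a matching in $G$. This is immediate from the definition of edge-coloring: any two edges sharing a vertex receive different colors, so no two edges of color $i$ share a vertex. Here it is important that $G$ may be a multigraph but $\varphi$ is still a proper edge-coloring, so parallel edges between the same two vertices must also receive distinct colors.

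Next I would do a simple double count. Let $E_i = \varphi^{-1}(i) \subseteq E(G)$ denote the edges colored $i$, and let $V_i$ denote the set of vertices incident to some edge of $E_i$. Since $E_i$ is a matching, each of its edges contributes exactly two distinct vertices to $V_i$, so $|V_i| = 2|E_i|$. By definition, $\pbar^{-1}(i) = V(G) \setminus V_i$, and hence
\begin{equation*}
|\pbar^{-1}(i)| = |V(G)| - 2|E_i|.
\end{equation*}
Reducing modulo $2$ gives $|\pbar^{-1}(i)| \equiv |V(G)| \pmod{2}$, as required.

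There is really no obstacle here; the content of the lemma is just the parity observation that a matching covers an even number of vertices. The hypothesis $k \ge \Delta(G)$ is not used in the argument itself, and only ensures the statement is nontrivially quantified over all $k$ colors (in particular, if $k > \Delta(G)$ then for each unused color $i$ we have $E_i = \emptyset$ and $|\pbar^{-1}(i)| = |V(G)|$, trivially of the same parity).
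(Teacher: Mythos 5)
Your proof is correct and is precisely the standard folklore argument; the paper itself does not prove this lemma but cites it as folklore (referencing a lemma of Stiebitz et al.), and that cited argument is the same parity double-count you give. Your observation about parallel edges is the right thing to flag for multigraphs, and your aside about the role of $k \ge \Delta(G)$ is accurate.
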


\begin{lemma}[Dalal, McDonald, Shan \cite{DMS}]\label{lem:equitable-coloring-precolored-edges}
	Let $G$ be   a multigraph  and $F\subseteq E(G)$. 
	Suppose that for some integer $k\ge \Delta(G)$, 
	$G$ has a $k$-edge-coloring such that all the edges in $F$ receive distinct colors. Then $G$ has 
	a $k$-edge-coloring $\varphi$ such that for any  distinct $i, j\in [1,k]$ it holds that $$\left ||\pbar^{-1}(i)| -|\pbar^{-1}(j)|\right | \le 5,$$ and all edges in $F$ receive distinct colors.
\end{lemma}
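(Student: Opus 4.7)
The plan is to run the standard Vizing multifan/Kempe-chain argument, carefully tracked against the $J_0$ constraint, by contradiction. I would choose a $k$-edge-coloring $\varphi$ of a subgraph of $G$ extending $\varphi_0$, leaving every edge of $J$ colored, and satisfying (P1) all edges of $J_0$ get distinct colors, and (P2) subject to (P1), the total number of colored edges is maximized. If some edge $e_0$ is uncolored, then $e_0\notin J$, so $e_0=uv$ with $u,v\ne x$; in case (b) I pick the labeling so that $v\in X$.

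Next I would build a maximal multifan $F$ centered at $u$ and restrict to the sub-multifan $F_1$ obtained by keeping only those linear sequences whose spokes avoid $J$; the estimates $|\pbar(v)|\ge 5$ in (a) and $|\pbar(v)|\ge 2$ in (b) (using $v\in X$) guarantee $F_1$ is non-trivial. The first key claim is $\pbar(u)\cap \pbar(w_i)=\emptyset$ for every spoke-end $w_i\in V(F_1)$: otherwise a Vizing shift along a linear sequence of $F_1$ from $w_i$ down to $w_0$, followed by coloring $uw_i$ with the shared missing color, never disturbs $J_0$ (by construction of $F_1$) yet colors one more edge, contradicting (P2).

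The more delicate claim bounds, for each color $\gamma\in[1,k]$, the number of $w_i\in V(F_1)$ that miss $\gamma$: by $3$ in (a), by $1$ in (b). This is where Kempe chains enter. For (a) I take $\alpha\in\pbar(u)$ (necessarily $\alpha\ne\gamma$ by the previous claim) and consider the four maximal $(\alpha,\gamma)$-alternating paths rooted at $u$ and at any four alleged fan-ends $y_1,\ldots,y_4$ missing $\gamma$; since $J_0$ carries at most one $\alpha$-edge and at most one $\gamma$-edge, at least one of these four components avoids $J_0$, and swapping along it either preserves $F_1$ outright or preserves the truncated multifan $F_1(u,w_0:y_1)$ while creating a violation of the first claim. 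In (b), the bipartiteness of $G-x$ forces any $(\alpha,\gamma)$-alternating path joining two fan vertices that would close an odd cycle with spokes of $F_1$ to pass through $x$, and hence to contain two edges of $J_0$; this collapses at most two of the three paths rooted at $u,y_1,y_2$ into one ``$J_0$-heavy'' path and always leaves a third that is $J_0$-free, so the same swap-and-contradict strategy works.

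Finally I would count. In (a), the two claims give $|\pbar(V(F_1))|\ge |\pbar(u)|+\tfrac13\sum_{i=0}^t|\pbar(w_i)|>k+2+t-d_{G'}(u)$, where $G'$ is the colored subgraph (using $|\pbar(z)|\ge k-\Delta\ge 4$ and $|\pbar(w_0)|\ge 5$). Against this, among the $d_{G'}(u)-t$ colored edges at $u$ outside $F_1$, at most two lie in $J$ (since $u\ne x$) and the remainder must be colored with colors outside $\pbar(V(F_1))$ by (P2) and the definition of $F_1$; this yields $|\pbar(V(F_1))|\le k+2+t-d_{G'}(u)$, a contradiction. In (b) the stronger repetition bound of $1$ gives the cleaner estimate $|\pbar(V(F_1))|\ge k-d_{G'}(u)+2+t$, while the hypothesis $\Delta(G[J]-x)\le 1$ cuts the waste at $u$ from two to one, yielding the matching contradiction. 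The main obstacle is case (b): with only $k\ge \Delta(G)$ available there is no slack to waste missing colors on $J_0$-edges during swaps, so the bipartite structure of $G-x$ must be used to force every ``bad'' alternating path to pass through $x$ and consume two $J_0$-colors at once, which is precisely what keeps one of the three candidate Kempe chains $J_0$-free and makes the final count tight.
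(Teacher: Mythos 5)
Your proposal does not prove the statement at hand. The lemma you were asked to prove (Lemma~\ref{lem:equitable-coloring-precolored-edges}) is a \emph{balancing} statement: given any $k$-edge-coloring of a multigraph $G$ with $k\ge\Delta(G)$ in which the edges of $F$ are rainbow, one must produce a new $k$-edge-coloring, still rainbow on $F$, in which the sets of vertices missing each color have nearly equal sizes, $\left||\pbar^{-1}(i)|-|\pbar^{-1}(j)|\right|\le 5$ for all $i,j$. Your argument never engages with the quantities $|\pbar^{-1}(i)|$, never compares two color classes, and never performs any equalization. Instead, you have reconstructed the multifan/Kempe-chain extension argument of Lemma~\ref{lemma:matching-extension}: the objects you work with ($J_0$ a matching plus a star at $x$, $J\supseteq J_0$ with $G-J$ simple, the cases $k\ge\Delta(G)+4$ and bipartite $G-x$ with all of $X$ of degree less than $k$) are hypotheses of that other lemma and do not appear in the statement you were assigned, whose only data are $G$, $F$, $k\ge\Delta(G)$, and an initial rainbow-on-$F$ coloring. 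So the conclusion you reach (every edge of $G$ can be colored while keeping $J_0$ rainbow) is simply a different theorem; the required conclusion about near-equal sizes of the missing-color sets is nowhere established.

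A correct proof needs a genuinely different mechanism: one takes a $k$-edge-coloring that is rainbow on $F$ and optimizes a balance measure over the sizes $|\pbar^{-1}(1)|,\dots,|\pbar^{-1}(k)|$; if two colors $i,j$ are out of balance by more than the allowed slack, the subgraph of edges colored $i$ or $j$ is a disjoint union of alternating paths and cycles, and swapping the two colors along a suitable path component strictly improves the balance while changing nothing else. The constraint that $F$ stay rainbow is handled by avoiding the (at most two) components containing the unique $F$-edge colored $i$ and the unique $F$-edge colored $j$, and this avoidance is exactly why the achievable bound is $5$ rather than the $2$ one would get without $F$. This is the argument of Dalal--McDonald--Shan that the paper cites (and observes carries over verbatim to multigraphs); none of it is present in your proposal, so as written there is a complete gap between what you prove and what the lemma asserts.
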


In fact, Theorem \ref{lem:equitable-coloring-precolored-edges} was stated only for simple graphs in~\cite{DMS}, but the same proof given there works for multigraphs. 

We can now prove Theorem 2, which we restate here for convenience.

\setcounter{theorem}{1}
\begin{theorem}
    For all $\varepsilon>0$, there exists $\xi $ with  $0<\xi  \ll \ve $ and $ n_0\in\mathbb{N}$ such $G^M$ has a good edge-colouring, provided $G, M$ satisfy all of the following:
    \begin{itemize}
    \item[(a)]  $G$ is a graph on $n\geq n_0$ vertices with $\delta(G)\geq\tfrac{1}{2}(1 + \varepsilon)n$ and $\Delta(G)<\frac{3}{4}n$;
    \item[(b)] $M$ is a matching in $\overline{G}$, and;
    \item[(c)] $G, M$ lies in one of the following two cases:
     \begin{enumerate}[(1)]
        \item $|M|=\lfloor \frac{n}{2} - 0.1 \ve n \rfloor$ and either $G$ is regular or $|V_\delta(G)|<\xi n$, the quantity $n-|V_\delta(G)|$ is odd, and $G$ has no middle-degree vertices;
  \item $|M|\in\{\lfloor \frac{n}{2} - 0.1 \ve n \rfloor, n-1-\Delta(G) \}$ and $|U_\xi(G)| \ge \xi n$.
    \end{enumerate}
    \end{itemize}
\end{theorem}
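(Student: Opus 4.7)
My plan is to apply Lemma~\ref{lemma:matching-extension}(b) to a carefully constructed bipartite multigraph $H$ derived from $G^M$. Using $G^M$ itself does not work: statement (a) of Lemma~\ref{lemma:matching-extension} requires $k \ge \Delta+4$, whereas our budget is $k = \Delta(G)+2 = \Delta(G^M)+1$; and statement (b) requires $G-x$ bipartite, whereas $G^M - x = G \cup M$ is not bipartite in general. The bulk of the work therefore lies in producing a bipartite multigraph $H$ with $\mu(H) \le 2$ such that a good $(\Delta(G)+2)$-edge-coloring of $H$ pulls back to one of $G^M$, with $J_0 = M \cup E(x)$ (a matching plus all of $x$'s incident edges) rainbow-colored throughout.

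I would first apply Lemma~\ref{lem:partition} with $N = V(M)$ to obtain a balanced bipartition $V(G)=A\cup B$ such that each edge of $M$ spans $(A,B)$ and $|d_A(v)-d_B(v)|\le n^{2/3}$ for every $v$, handling parity in $n$ separately. This forces the ``non-bipartite'' subgraph $G[A]\cup G[B]$ to have maximum degree at most $\Delta(G)/2 + O(n^{2/3})$, roughly half our color budget. I would then decompose $G[A]\cup G[B]$ into matchings $N_1,\ldots,N_s$ via Vizing (Theorem~\ref{thm:chromatic-index}) and ``reroute'' each non-bipartite edge along a companion $M^*$-edge, where $M^*\supseteq M$ is an extension of $M$ to a (near-)perfect matching in $\overline{G}[A,B]$ guaranteed by Lemma~\ref{lem:matching-in-bipartite}. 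Concretely, for each $uv\in N_i$ with $u,v\in A$, pick a partner $M^*$-edge $a_jb_j$ and replace $uv$ in $H$ by two bipartite edges $ub_j,\, va_j$, creating a controlled double edge at $b_j$ (symmetrically for $G[B]$). The resulting $H$ is bipartite with parts $A$ and $B$ plus the extra vertex $x$, satisfies $\mu(H)\le 2$, and preserves $J_0$ as a matching-plus-star at $x$.

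With $H$ in hand, an initial partial coloring $\varphi_0$ of $H$ on an edge set $J\supseteq J_0$ (containing in particular all the doubled rerouted edges) is obtained by coloring $J_0$ with distinct colors---possible because $|J_0|\le \Delta(G^M)$---and extending via K\"onig (Theorem~\ref{thm:konig}) on the bipartite part; Lemma~\ref{lem:equitable-coloring-precolored-edges} then balances the color classes while preserving the rainbow status of $J_0$, and Lemma~\ref{lem:parity} controls the resulting missing-color counts, which together let me enforce the hypothesis $\Delta(G[J]-x)\le 1$ of Lemma~\ref{lemma:matching-extension}(b). Applying Lemma~\ref{lemma:matching-extension}(b) with $k=\Delta(G)+2 \ge \Delta(H)$ and bipartition $X=A$, $Y=B$ (the degree condition $d_H(v)<k$ for $v\in X$ follows from $d_B(v)\le \Delta(G)/2+O(n^{2/3})$) extends $\varphi_0$ to a full $k$-edge-coloring of $H$, which I pull back to a good edge-coloring of $G^M$ by identifying the color of each paired-double with the corresponding non-bipartite edge in $G$.

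The main obstacle is the rerouting construction and the verification that every hypothesis of Lemma~\ref{lemma:matching-extension} is met: $\mu(H)\le 2$, the requirement that for each non-$x$ vertex $u$ there is at most one $v$ with $e_H(u,v)=2$, bipartiteness of $H-x$, and the degree bound on $X$. The two sub-cases of condition (c) likely require separate treatment because they supply very different resources. Case (1) furnishes a matching $M$ of size close to $n/2$, giving plenty of companion $M^*$-edges through which to funnel the $N_i$'s, and the regularity (or near-regularity with the specified parity on $|V_\delta(G)|$) guarantees the parity/count bookkeeping in Lemmas~\ref{lem:parity} and~\ref{lem:equitable-coloring-precolored-edges} works out. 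In Case (2) with $|M|=n-1-\Delta(G)$, the matching can be much smaller, so the rerouting must instead route many edges through the large star $E(x)$, with the slack coming from the hypothesis $|U_\xi(G)|\ge \xi n$, which provides many vertices whose degree deficiency accommodates the extra edges appearing at their $H$-images.
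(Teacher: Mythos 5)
Your high-level reading of Lemma~\ref{lemma:matching-extension} is correct: part (a) needs $k\ge\Delta+4$ which we cannot afford on $G^M$ itself, and part (b) needs $G-x$ bipartite. The paper resolves this tension not by making everything bipartite at once, but with a two-phase coloring: it first colors $Q_{AB}$ (which contains $Q[A]\cup Q[B]$ together with $k$ pre-designated edges of $M\cup E(x)$) with $k\approx\Delta/2$ colors via Lemma~\ref{lemma:matching-extension}(a) --- this is legal because $\Delta(Q_{AB})\approx\Delta/2$, so $k\ge\Delta(Q_{AB})+4$ --- then it extends each color class to a (near-)perfect matching of $Q$ by alternating-path switches, and only the leftover, now genuinely bipartite, graph is fed to Lemma~\ref{lemma:matching-extension}(b) with the remaining $\Delta+2-k-\ell$ colors. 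Your proposal is a genuinely different route: you try to collapse this into a single application of part (b) by building one bipartite multigraph $H$.

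The ``rerouting'' step, which is the crux of your approach, has a gap that I do not see how to repair. First, a bookkeeping issue: if $a_j\in A,b_j\in B$, then for $u,v\in A$ one of $ub_j,va_j$ is still an $A$--$A$ edge, so $H$ is not bipartite; if instead $a_j,b_j\in B$ both new edges are bipartite, but then the pullback fails: a proper edge-coloring of $H$ is free to give $ub_j$ and $va_j$ two different colors, and there is no color you can consistently assign to $uv$ in $G$ that is missing at both $u$ and $v$. Nothing in a proper edge-coloring of a bipartite multigraph constrains two vertex-disjoint edges to agree. Second, and independently, the degree accounting cannot work: $e(G[A])+e(G[B])=\Theta(n\Delta)=\Theta(n^2)$ edges must be rerouted, but a (near-)perfect matching $M^*$ in $\overline{G}[A,B]$ supplies only $\Theta(n)$ companion edges, so a typical companion vertex $a_j$ or $b_j$ would receive $\Theta(n)$ new incident edges in $H$, wildly exceeding the budget $k=\Delta+2$ and the hypothesis $d_H(v)<k$ for $v\in X$ needed by part (b). In other words, you cannot funnel the entire ``non-bipartite'' part of $G$ through a single matching and a single star; the volume is off by a factor of $n$. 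Finally, a smaller point: the paper's use of Lemma~\ref{lem:partition} is with a carefully chosen pairing $N$ (prioritizing $U_\xi$, $V_\delta$, then $M$) precisely because the later parity and degree bookkeeping (Claim~\ref{claim:Case1-n-odd-e(A)>e(B)}, Claim~\ref{claim:G_AB-maximum-min-degree}) depends on it; taking $N=V(M)$ alone would not give the control on $e(Q[A])-e(Q[B])$ and on $d_{Q_{AB}}(v_1)$ that the argument needs.
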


\proof Let   
$n_0 \in \mathbb{N}$ and $\xi$  be chosen such that 
$n_0$ is  at least the lower bound of $n$ in Lemma~\ref{lem:partition}
and such that 
\begin{equation}\label{eqn:parameters1}
	0< \frac{1}{n_0 }   \ll  \xi  \ll \ve.    
\end{equation} We have different assumptions for $G, M$ according to (1) and (2) above, and we refer to these two different situations as \emph{case 1} and \emph{case 2}, respectively.

Throughout the proof, we let $\Delta=\Delta(G)$, $\delta =\delta(G)$, $U= U_\xi(G)$, and we let $V_\Delta$ and $V_\delta$ the set of 
maximum degree and minimum degree vertices of $G$, respectively. Note that in Case 1, we have $V(G)=V_\Delta \cup V_\delta$.
  
Consider $G^M$, with $x\in V(G^M) \setminus V(G)$. We first define a subgraph $G_1$ of $G^M$ on $m$ vertices as follows:
\begin{numcases}{}
    G_1=G^M- x, \quad m=n & if $n$ is even;  \nonumber \\
    G_1 = G^M, \quad m=n+1 & if $n$ is odd. \nonumber
\end{numcases}
Let $V(G_1)=\{v_1, \ldots, v_m\}$ and assume that $d_{G_1}(v_1) \le \ldots \le d_{G_1}(v_m)$. Note that if $n$ is odd then $v_1=x$ (since $|E(x)|<n/2$). If $n$ is even then we have  $v_1\in V_{\delta}$. 

The remainder of our proof proceeds in two phases: the construction phase and the edge-coloring phase. The construction phase builds a multigraph $Q_{AB}$ in three steps: (1) partitioning $V(G_1)$ into sets $A, B$ using Lemma \ref{lem:partition}; (2)  adding edges to one side of this bipartition in $G_1$ to make a multigraph $Q$ that is somehow easier to deal with and; (3) defining a multigraph $Q_{AB}$ (in most cases a subgraph of $Q$), whose edges we want to color first. The coloring phase proceeds in four steps: (1) edge-coloring $Q_{AB}$ using Lemma \ref{lemma:matching-extension}(a); (2) modifying these color classes so that each is a perfect matching of $Q$ (in case 1) or saturates $V(Q)\setminus U$ and a small subset of vertices of $U$ (in case 2); (3) using new colors on the edges that got uncolored in the previous step, and extending these new color classes in $Q$, and; (4) using Lemma \ref{lemma:matching-extension}(b) to color the remaining edges of $Q$ and get our desired good coloring of $G^M$.  

We proceed now with this work.

\noindent \underline{{\sc Construction Phase.}}

\noindent \textbf{ Step 1: Partition $V(G_1)$ into sets $A$ and $B$.}

We have ensured that $G_1$ is a graph with an even number $m$ of vertices, and we will now pair them into \emph{partners} and label them as $x_i, y_i$ for each $i\in [1,m/2]$.  
To accomplish this, we do a preliminary round of pairing, then we pair up the two endpoints of any edge in $M$ whose endpoints are both still unpaired, and then we pair up the remaining vertices of $G_1$ arbitrarily. If we are in case 2 the preliminary round consists of pairing up $\lfloor \xi n/2\rfloor$ pairs of vertices in $U$. If we are in case 1 and $G$ is regular then we do nothing for the preliminary round. If we are in case 1 and $G$ is not regular, the preliminary round consists of pairing up all the vertices in $V_\delta\setminus \{v_1\}$ (there are an even number of them, since $n-|V_\delta|$ is odd and $v_1\in V_{\delta}$ iff $n$ is even).  Note that in this last situation, since there are no middle-degree vertices, we know that $v_1$ is paired with a vertex from $V_{\Delta}$. In all situations, there are at most $\xi n$ edges of $M$ whose two endpoints are not partnered with each other.

We now apply Lemma~\ref{lem:partition} to $G_1$  and $N=\{x_1,y_1, \ldots, x_{m/2}, y_{m/2}\}$; note this works because $m$ is even. The lemma provides a partition $\{A,B\}$ of $V(G_1)$ satisfying the following properties. Note for a set of vertices $X\subseteq V(G_1)$ we use $N_{G_1}^X(v)$ to denote the vertices in $N_{G_1}(v)\cap X$.
\begin{enumerate}[(P1)]
	\item  $|A|=|B|$;
	\item $|A\cap \{x_i,y_i\}|=1$ for each $i\in [1, \frac{m}{2}]$;
	\item $\big| |N_{G_1}^A(w)|-|N_{G_1}^B(w)|\big| \le (\tfrac{m}{2})^{2/3}$  for each $w\in V(G_1)$. 
\end{enumerate}
By renaming the partition $\{A,B\}$ if necessary, we may assume that $v_1\in B$.

Before moving on to the next step of our construction phase, we pause to prove one claim that will be useful later on.

\begin{claim}\label{claim:Case1-n-odd-e(A)>e(B)}
    Suppose we are in case 1. If $G$ is regular and $n$ is even, then  $e(G_1[A])=e(G_1[B])$. Otherwise,
    \begin{equation}
       \tfrac{1}{2}(\Delta -d_{G_1}(v_1)-2\xi n)  \le e(G_1[A]) -e(G_1[B]) \le \tfrac{1}{2}(\Delta+1+2\xi n -d_{G_1}(v_1)). 
    \end{equation}
\end{claim}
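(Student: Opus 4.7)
The plan is to compute $e(G_1[A]) - e(G_1[B])$ via the identity
\[
    e(G_1[A]) - e(G_1[B]) \;=\; \tfrac{1}{2}\Big( \textstyle\sum_{v\in A} d_{G_1}(v) - \sum_{v\in B} d_{G_1}(v) \Big),
\]
which follows from $\sum_{v\in X} d_{G_1}(v) = 2e(G_1[X]) + e_{G_1}(A,B)$ for each $X\in\{A,B\}$. The analysis then reduces to tracking the two degree sums, where the key leverage is property (P2): every paired couple $\{x_i,y_i\}$ contributes one vertex to $A$ and one to $B$. I will split by the parity of $n$, since this determines whether $G_1 = G^M - x$ or $G_1 = G^M$ and where $v_1$ sits.

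When $n$ is even, $G_1 = G^M - x$ and $v_1\in V_\delta\cap B$. Here $d_{G_1}(v)-d_G(v)$ is $1$ or $0$ according as $v\in V(M)$, so writing $D_X := \sum_{v\in X} d_G(v)$ and $e_M(X)$ for the number of $M$-edges fully inside $X$, I obtain $\sum_A d_{G_1} - \sum_B d_{G_1} = (D_A - D_B) + 2(e_M(A) - e_M(B))$. If $G$ is regular then $|A|=|B|$ gives $D_A = D_B$; moreover there is no preliminary round, so every $M$-edge is paired together in Step~1 and by (P2) contributes one endpoint to each side, forcing $e_M(A)=e_M(B)=0$ and hence $e(G_1[A])=e(G_1[B])$. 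If $G$ is not regular then $V(G)=V_\delta\cup V_\Delta$; the pre-round pairs $V_\delta\setminus\{v_1\}$ and so (P2) yields $|V_\delta\cap A|=(|V_\delta|-1)/2$ and $|V_\delta\cap B|=(|V_\delta|+1)/2$, from which $D_A - D_B = (\Delta-\delta)(|V_\delta\cap B| - |V_\delta\cap A|) = \Delta-\delta$. Any $M$-edge avoiding $V_\delta\setminus\{v_1\}$ is paired in Step~1 and contributes $0$ to $e_M(A)-e_M(B)$, while the remaining $M$-edges meet $V_\delta$ and so number at most $|V_\delta|<\xi n$, giving $|e_M(A) - e_M(B)|<\xi n$. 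Combined with $d_{G_1}(v_1)\in\{\delta,\delta+1\}$, this places $e(G_1[A])-e(G_1[B])$ inside the claim's interval.

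When $n$ is odd, $G_1 = G^M$, $v_1 = x\in B$, and crucially $d_{G_1}(v)=d_G(v)+1$ \emph{uniformly} for every $v\in V(G)$ (whether the extra edge is from $M$ or from $E(x)$). This erases any split-matching correction: $\sum_A d_{G_1} - \sum_B d_{G_1} = (D_A-D_B)+1-d_{G_1}(v_1)$. Since $v_1 = x\notin V_\delta$, the pre-round (if any) pairs all of $V_\delta$ and so $|V_\delta\cap A|=|V_\delta\cap B|$, from which a direct computation gives $D_A - D_B = \Delta$. Hence $e(G_1[A])-e(G_1[B]) = \tfrac{1}{2}(\Delta+1-d_{G_1}(v_1))$, which is comfortably inside the claim's range; the regular sub-case (where $V_\delta = V(G)$) is subsumed by the same calculation.

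The main obstacle is bookkeeping: carefully chaining the three pairing phases of Step~1 to pin down $|V_\delta\cap B|-|V_\delta\cap A|$, and then controlling the split-matching discrepancy $|e_M(A)-e_M(B)|$ in the $n$ even, non-regular case. That latter bound is exactly what the $\xi n$ slack in the claim's interval is absorbing.
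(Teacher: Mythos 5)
Your proposal is correct and follows essentially the same route as the paper: the degree-sum identity $e(G_1[A])-e(G_1[B])=\tfrac12\bigl(\sum_A d_{G_1}-\sum_B d_{G_1}\bigr)$, a split on the parity of $n$, and tracking of $V_\delta$ and the $M$-edges via property (P2). The only cosmetic difference is that you compute $D_A-D_B$ exactly in each subcase (getting $\Delta-\delta$ when $n$ is even and non-regular) and then verify it sits inside the claimed $\pm\xi n$-window via $d_{G_1}(v_1)\in\{\delta,\delta+1\}$, whereas the paper bounds the degree-sum difference directly in terms of $\Delta+1-d_{G_1}(v_1)\pm 2(|V_\delta|-1)$; both organizations give the same bound.
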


\begin{proofc} If $G$ is regular and $n$ is even then $G_1$ is $G$ with a matching added, so $e(G_1[A])=e(G_1[B])$. So suppose we are not in this case.

Suppose first that $n$ is odd. Then $v_1=x$, and all the vertices of $V_{\delta}$ are distributed evenly between $A$ and $B$. By assumption of the claim, there are no middle-degree vertices in $G$. We know that $v_1\in B$, and $v_1$ is paired with a vertex in $V_\Delta$, which has degree $\Delta+1$ in $G_1$. So we get
 $$
 e(G_1[A]) -e(G_1[B]) =\tfrac{1}{2}\left(\sum_{v\in A}d_{G_1}(v)- \sum_{v\in B}d_{G_1}(v)\right)=\tfrac{1}{2}(\Delta+1-d_{G_1}(v_1)).
 $$

Now suppose that $n$ is even. Then $v_1 \in V_{\delta}\cap B$ is paired with a vertex in $V_{\Delta}\cap A$, all the vertices of $V_{\delta}\setminus \{v_1\}$ are paired up and distributed evenly between $A$ and $B$, and there are no middle-degree vertices in $G$ (since $G$ is not regular). By the construction of $G_1$, we know that $d_{G_1}(v)=d_G(v)+1$ if $v\in V(M)$ and $d_{G_1}(v)=d_G(v)$ if $v\in V(G)\setminus V(M)$.  However, at most 
$2(|V_{\delta}|-1)$ vertices in $V(M)$ are not partnered with the other end of their $M$-edge (the vertices from $V_\delta\setminus \{v_1\}$, and their partners). 

So we get
$$
    \sum_{v\in A} d_{G_1}(v) -\sum_{v\in B} d_{G_1}(v) \le \Delta+1-d_{G_1}(v_1)+ 2(|V_{\delta}|-1),  
$$
 and 
 $$
    \sum_{v\in A} d_{G_1}(v) -\sum_{v\in B} d_{G_1}(v) \ge  \Delta-d_{G_1}(v_1)-2(|V_{\delta}|-1)).  
$$
Since $|V_\delta|< \xi n$ in case 1, these give the desired bounds on $e(G_1[A]) -e(G_1[B])$. 
\end{proofc}

 \vspace*{.1in}

\noindent \textbf{Step 2: Construction of $Q$.}

Our goal in this step is to add edges to one side of the bipartition $(A, B)$ in $G_1$ so that the resulting multigraph $Q$ is somehow easier to deal with. In case 1 this means that we'll ensure $e(Q[A]) = e(Q[B])$ (which will be important in coloring step 3). In case 2 this means we'll ensure that $d_Q(x)$ is not too small when $n$ is odd, i.e. when $x\in V(G_1)$, which will be important in coloring step 2.  In case 2 when $n$ is even, our $G_1$ is already fine as it is, and we set $Q=G_1$.

Suppose first that we are in case 2 with $n$ odd. Then $v_1=x$. We let $B_0\subseteq B$
with $|B_0| = \lceil \frac{1}{2}(\Delta -d_{G_1}(v_1))  \rceil$
and let $Q$ 
be obtained from $G_1$ by adding all edges in $E(v_1):=\{v_1v: v\in B_0\}$.  Note that since $|B|=\tfrac{n+1}{2}$ and $\tfrac{1}{2}\Delta<\tfrac{3n}{8}$ this selection of $B_0$ is possible. 

Suppose now that we are in case 1. We will define $Q$ so that $e(Q[A])=e(Q[B])$. If $n$ is even and $G$ is regular, then by Claim \ref{claim:Case1-n-odd-e(A)>e(B)}, $e(G_1[A])=e(G_1[B])$ so we set $G_1=Q$. Suppose now that either $n$ is odd or $n$ is even but $G$ is not regular. By Claim \ref{claim:Case1-n-odd-e(A)>e(B)},
\begin{equation}\label{eqn:AminusB} \tfrac{1}{2}(\Delta -d_{G_1}(v_1)-2\xi n)  \le e(G_1[A]) -e(G_1[B]) \le \tfrac{1}{2}(\Delta+1+2\xi n -d_{G_1}(v_1)). \end{equation}

If $n$ is odd, we have $v_1=x$ and $d_{G_1}(x)=n-2|M|\geq 2\xi n$, so by (11), we get  $e(G_1[A]) -e(G_1[B])<\tfrac{1}{2}(\Delta+1)<\tfrac{1}{2}(\tfrac{3n}{4}+1)<\tfrac{n}{2}$. Hence we can choose
$B_0\subseteq B$ with $|B_0| = e(G_1[A]) -e(G_1[B]) $ and let $Q$ be obtained from $G_1$ by adding all edges of  $E(v_1):=\{v_1v: v\in B_0\}$.

Suppose now that $n$ is even and $G$ is not regular. If $d_{G_1}(v_1) <\Delta +1  -2\xi n$ then (\ref{eqn:AminusB}) tells us that $e(G_1[A]) -e(G_1[B])>0$. In this case, we let $B_0\subseteq B$ with $|B_0| = e(G_1[A]) -e(G_1[B]) $
and let $Q$ be obtained from $G_1$ by adding all edges of  $E(v_1):=\{v_1v: v\in B_0\}$. On the other hand, if $d_{G_1}(v_1)  \ge \Delta +1 -2\xi n$, then (\ref{eqn:AminusB}) says that $e(G_1[A]) -e(G_1[B]) \le \frac{1}{2}(2\xi n+2\xi n)=2\xi n<\tfrac{n}{4}$. In fact it may be the case that $e(G_1[A])>e(G_1[B])$ here (depending on how many edges of $M$ ended up on one side of the bipartition); whichever of $e(G_1[A]),e(G_1[B])$ is smaller we choose
$|e(G_1[A]) -e(G_1[B])|$ pairs of distinct vertices on that side, and add an edge between each pair of vertices.
We still denote by $B_0$ the set of these chosen vertices. 
As we will see, finding a good coloring in this case is independent of which part contains $v_1$. Thus, we may assume that $e(G_1[B]) \leq e(G_1[A])$; that is, we add the matching to $B$.

In the following step, it will be important to keep track of the degree of $v_1$ during the construction of $Q$, we let $q_1$ be the number of edges added in the construction of $Q$ that are incident to $v_1$, in $B$. The following table gives the values for $q_1$.\\

\begin{table}[h!]
    \renewcommand{\arraystretch}{1.3} % Increase row height
    \centering
    \begin{tabular}{|l|l|}
    \hline
    Case 1, $n$ even, $G$ regular & $q_1 = 0$ \\
     \hline
        Case 1, $n$ even, $G$ not regular, $d_{G_1}(v_1)<\Delta+1-2\xi n$ & $q_1 = e(G_1[A])-e(G_1[B])$ \\
        \hline
        Case 1, $n$ even, $G$ not regular, $d_{G_1}(v_1)\geq \Delta+1-2\xi n$ & $q_1 \leq 1$ \\
        \hline
        Case 1, $n$ odd & $q_1 = e(G_1[A])-e(G_1[B])$\\
        \hline
        Case 2, $n$ even & $q_1 = 0$\\
        \hline
        Case $2$, $n$ odd & $q_1 = \lceil \frac{1}{2}(\Delta-d_{G_1}(v_1))\rceil$ \\
        \hline
    \end{tabular}
    \caption{The number of edges, $q_1$, added in the construction of $Q$ which are incident to $v_1$ in $B$. Note when $q_1 = e(G_1[A])-e(G_1[B])$, (\ref{eqn:AminusB}) gives bounds on $q_1$.}
    \label{q_1 values}
\end{table}

\medskip 

\noindent \textbf{Step 3: Constructing the multigraph $Q_{AB}$ from $Q$.}

Let 
\begin{numcases}{k=}
    \left\lceil \tfrac{1}{2}\Delta+1.1 \xi n  \right\rceil +4 & in Case 1, \nonumber  \\
     & \label{eqn:definition-k} \\ 
    \left\lceil \tfrac{1}{2}\Delta+ n^{2/3}  \right\rceil+4 & in Case 2.   \nonumber 
\end{numcases}
We will choose a special set $M_1$ of $k$ edges from $M\cup E(x)$, and then let $Q_{AB}$ be the graph induced by these edges as well as all edges of $Q[A], Q[B]$. Note that some of our $k$ edges may be chosen from $Q[A] \cup Q[B]$. 

First suppose we are in case 1. Then $|M|=\lfloor \frac{n}{2} - 0.1 \ve n \rfloor>k$. If $n$ is even, we let $M_1$ be a set of any $k$ edges from $M$. If $n$ is odd, then $x\in V(Q)$ with $|E(x)|= n-2|M|$; we choose for $M_1$ all edges in $E(x)\cap E(Q[B])$ and then $k-|E(x)\cap E(Q[B])|$ edges from $M$.  

Now suppose that we are in case 2. We know that $|M|\in\{\lfloor \frac{n}{2} - 0.1 \ve n \rfloor, n-1-\Delta(G) \}$. In the former case, we have $|M|>k$ and we choose the edges of $M_1$ as in case 1. Assume now that $|M|=n-1-\Delta$. Then
$$|E(x)|+|M|=n-2|M|+|M|=n-|M|=\Delta+1>k.$$

Suppose first that $n$ is odd. We know that  $x\in V(Q)$ and by property (P3) of the bipartition,
\begin{eqnarray*}|E(x)\cap E(Q[B]))| &\leq& \tfrac{1}{2}\left(|E(x)|+ n^{2/3}\right)=\tfrac{1}{2}(n-2|M|+n^{2/3})\\
&= & \tfrac{1}{2}(2\Delta+2-n+n^{2/3}) < \tfrac{1}{2}(2\Delta+2-(\tfrac{4\Delta}{3})+n^{2/3})<k.
\end{eqnarray*}
There are at most $\xi n$ edges of $M$ whose endpoints are not partnered and thus may lie in the same side of the bipartition $(A, B)$.  So in fact, 
$$     |M\cap (E(Q[A]\cup Q[B]))|+|E(x)\cap E(Q[B])|  <k. $$
For the set $M_1$, we can therefore take all the edges of $M\cap (E(Q[A]\cup Q[B]))$ and all the edges of $E(x)\cap E(Q[B])$; we take the rest of the $k$ edges from $M\cap E(Q[A, B])$ and then from $E(x)\cap E(Q[A, B])$, as necessary

Suppose now that $n$ is even. If $k\leq |M|$, then we choose for $M_1$ all edges in $M\cap (E(Q[A])\cup E(Q[B]))$ (at most $\xi n$ of them) and then take the rest of the $k$ edges from $M\cap E(Q[A, B])$. If $k>|M|$ we obtain $M_1$ by taking all edges of $M$ and $k-|M|$ edges from $E(x)$.

Note that we have defined $Q_{AB}$ so that it is a subgraph of $Q$, except in one exceptional situation: when $n$ is even (so $x\not\in V(Q)$), and $k>|M|$. In this special situation we let $S_0=\{x\}$, and otherwise we let $S_0=\emptyset$. In general we have $V(Q_{AB})=V(Q)\cup S_0$.\\

\noindent \underline{{\sc Coloring Phase.}}\\

\noindent \textbf{Step 1: Edge color $Q_{AB}$.}

By property (P3) and by how we constructed $Q$, we know that for any  $w\in V(Q)\setminus \{x\}$ we have
\begin{eqnarray}\label{eqn:partition-neighbor-difference}
    \left||N^A_Q(w)|-|N^B_Q(w)|\right| \le  (\tfrac{m}{2})^{2/3} +1; 
\end{eqnarray}
when $x\in V(Q)$ (i.e. when $n$ is odd) this difference may increase by more than one when we construct $Q$ from $G_1$. The following claim tells us about the degrees of vertices in $Q_{AB}$.\\

\begin{claim}\label{claim:G_AB-maximum-min-degree} The following statements hold. 
\begin{enumerate}[(1)]
    \item  For the vertex $v_1$, we have \begin{numcases}{}
\Delta/2 -1.1\xi n      \le d_{Q_{AB}}(v_1) \le \Delta/2 +1.1\xi n & in Case 1; \nonumber  \\
\Delta/2 -n^{2/3}      \le d_{Q_{AB}}(v_1) \le \Delta/2 +n^{2/3} & in Case 2 when $n$ odd.  \nonumber 
 \end{numcases} 
 Furthermore, we have $d_Q(v_1) \le \Delta+1$. 
 \item For any vertex $v\in V(Q_{AB}) \setminus U$, we have 
 $$
\Delta/2 -1.1 \xi n  \le d_{Q_{AB}}(v) \le \Delta/2 +n^{2/3}.  
 $$
 \item  For any vertex $v\in U$, where $v\ne v_1$ in Case 1 or Case 2 with $n$  odd, we have 
  $$
\delta/2 -n^{2/3}  \le d_{Q_{AB}}(v) \le \Delta/2 +n^{2/3}.  
 $$ 
\end{enumerate}
\end{claim}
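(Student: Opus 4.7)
The plan is to compute $d_{Q_{AB}}(v)$ directly from its definition. For any $v\in V(Q)$, the edges of $E(Q[A])\cup E(Q[B])$ incident to $v$ are precisely those joining $v$ to vertices on its own side of the bipartition, contributing $|N_Q^A(v)|$ if $v\in A$ and $|N_Q^B(v)|$ if $v\in B$ (counted with multiplicity). Since $M_1\subseteq M\cup E(x)$, at most two edges of $M_1$ are incident to any single vertex, so the additional $M_1$-contribution to $d_{Q_{AB}}(v)$ is at most $2$.

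The next step is to bound $|N_Q^A(v)|$ and $|N_Q^B(v)|$. From $|N_Q^A(v)|+|N_Q^B(v)|=d_Q(v)$ together with~(\ref{eqn:partition-neighbor-difference}), one gets, for $v\in V(Q)\setminus\{x,v_1\}$, that each of $|N_Q^A(v)|,|N_Q^B(v)|$ lies within $\tfrac12((m/2)^{2/3}+1)$ of $d_Q(v)/2$. For $v=v_1\ne x$, use $d_Q(v_1)=d_{G_1}(v_1)+q_1$ and the fact that all $q_1$ edges added in Step~2 go from $v_1$ into $B$, so $|N_Q^B(v_1)|=|N_{G_1}^B(v_1)|+q_1$ and $|N_Q^A(v_1)|=|N_{G_1}^A(v_1)|$; property~(P3) applied in $G_1$ then controls the latter two quantities. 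An analogous treatment handles $v_1=x$ by applying~(P3) to $x$ in $G_1$ and tracking that the $q_1$ added edges all land in $B$.

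Given this set-up, parts~(2) and~(3) follow by plugging in the degree bounds $d_{G_1}(v)=d_Q(v)\in(\Delta-\xi n,\Delta+1]$ (for $v\notin U$) or $d_{G_1}(v)\ge\delta$ (for $v\in U$, $v\ne v_1$), and absorbing the cumulative error of size at most $2+\tfrac12((m/2)^{2/3}+1)$ into the stated $1.1\xi n$ and $n^{2/3}$ margins, legitimate because $n_0\gg 1/\xi^{3}$. For part~(1), we work row by row through Table~1, using either the explicit bound $q_1\le 1$ or the interval for $q_1=e(G_1[A])-e(G_1[B])$ from~(\ref{eqn:AminusB}); in each row, $d_{G_1}(v_1)/2+q_1$ lies within $\xi n$ of $\Delta/2$ (in case~1) or within $O(1)$ of $\Delta/2$ (in case~2 with $n$ odd), which yields the stated range after incorporating the $(m/2)^{2/3}$ partition error.

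The bound $d_Q(v_1)\le\Delta+1$ is a row-by-row check using $d_{G_1}(v_1)\le\Delta+1$ and the tabulated values of $q_1$. The main obstacle is bookkeeping: six rows in Table~1 interact with the three sub-statements, and one must track additive correction terms of size $2$ and $\tfrac12((m/2)^{2/3}+1)$ carefully so that they fit within the stated tolerances. The subtlest row is case~1 with $n$ even, $G$ non-regular, and $d_{G_1}(v_1)\ge\Delta+1-2\xi n$, where $q_1\le 1$; here one exploits the case~1 structural assumption (no middle-degree vertices and $v_1\in V_\delta$) to conclude $d_{G_1}(v_1)\le\delta+1\le\Delta$, so $d_Q(v_1)\le\Delta+1$ as required.
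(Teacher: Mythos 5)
Your overall decomposition, $d_{Q_{AB}}(v)=(\text{same-side degree in } Q)+(\text{contribution from } M_1\text{-edges crossing } (A,B))$, is the same as the paper's, and your row-by-row treatment of Table~1 for part~(1) together with your handling of parts~(2) and~(3) via~\eqref{eqn:partition-neighbor-difference} is essentially correct.

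However, there is a genuine gap at $v_1=x$, i.e.\ when $n$ is odd. Your blanket statement ``at most two edges of $M_1$ are incident to any single vertex'' is false for $x$: $x$ is the center of the star $E(x)$, and a substantial fraction of the $k\approx\Delta/2$ edges of $M_1$ may come from $E(x)$. The quantity you need to control for $v_1=x$ is $e_{Q_{AB}}(v_1,A)=|M_1\cap E(x)\cap E(Q[A,B])|$, the number of $M_1$-edges from $x$ landing in $A$; a priori this could be of order $n$, which would destroy the upper bound $\Delta/2+n^{2/3}$ required in Case~2 with $n$ odd. Your ``analogous treatment'' only accounts for (P3) and the $q_1$ added edges, not for these crossing $M_1$-edges. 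The paper fills this gap with a dedicated computation: using the priority ordering in the construction of $M_1$ (all of $E(x)\cap E(Q[B])$ and then $M\cap E(Q[A,B])$ are chosen before any of $E(x)\cap E(Q[A,B])$), together with a lower bound on $|E(x)\cap E(Q[B])|$ from (P3) and the identity $|M|=n-1-\Delta$, it shows $|M|+|E(x)\cap E(Q[B])|>k$ and hence $e_{Q_{AB}}(v_1,A)=0$. You need this (or an equivalent argument) to make the $v_1=x$ rows of part~(1) go through; without it the stated upper bounds are not established. The remaining pieces of your proposal (the use of $q_1\le 1$ plus $d_{G_1}(v_1)\le\delta+1\le\Delta$ in the non-regular even subcase, the interval for $q_1$ from~\eqref{eqn:AminusB}, and absorbing $(m/2)^{2/3}$-sized errors into the margins) track the paper closely and are fine.
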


\begin{proofc} First we prove (1). 
 
Note that 
\begin{eqnarray*}
    d_{Q_{AB}} (v_1) &=&e_{Q_{AB}}(v_1,A) +e_{Q_{AB}}(v_1,B),  \\
    e_{Q_{AB}}(v_1,B) &=&e_{G_1[B]}(v_1,B)+q_1,  \quad \text{and} \\
   \frac{1}{2}(d_{G_1}(v_1)-\tfrac{m}{2})^{2/3}) &\le& e_{G_1[B]}(v_1,B) \le \frac{1}{2}(d_{G_1}(v_1)+\tfrac{m}{2})^{2/3}). 
\end{eqnarray*}

We first prove that $e_{Q_{AB}}(v_1,A) \le 1$. 
If  $|M| =\lfloor \frac{n}{2} -0.1\ve n \rfloor$, then as $|M|>k$, by our construction of $Q_{AB}$, we have 
$e_{Q_{AB}}(v_1,A) \le 1$.  
If $|M| = n-1-\Delta$, then $G$ is in Case 2 and $n$ is odd. By our construction of $M_1$, we have 
$$
e_{Q_{AB}}(v_1,A)  =  \max\{k -|E(x) \cap E(Q[B])| - |M|, 0 \}. 
$$
 Since  $|E(x) \cap E(Q[B])| =|N^B_{G_1}(x)| \ge \frac{1}{2}(d_{G_1}(x)-(\tfrac{m}{2})^{2/3}) = \frac{1}{2} (2\Delta+2- n -(\tfrac{m}{2})^{2/3})$, and $|M| =n-1-\Delta$, it follows that  $|M|+|E(x) \cap E(Q[B])| 
 \ge  \frac{n}{2}- \frac{1}{2}(\tfrac{m}{2})^{2/3}>k$. 
 Thus $e_{Q_{AB}}(v_1,A)  =0$. 

Since $e_{Q_{AB}}(v_1,A) \le 1$, we have 
$$
 \frac{1}{2}(d_{G_1}(v_1)-(\tfrac{m}{2})^{2/3})+q_1 \le d_{Q_{AB}} (v_1) \le \frac{1}{2}(d_{G_1}(v_1)+(\tfrac{m}{2})^{2/3}) +q_1 +1.
 $$
When $q_1 \le 1$ or $q_1 = \lceil \frac{1}{2}(\Delta - d_{G_1}(v_1)) \rceil$, 
Claim~\ref{claim:G_AB-maximum-min-degree}(1) follows 
easily. So we assume that  $q_1=e(G_1[A])-e(G_1[B])$. 
This implies that $G$ is in Case 1. By Claim~\ref{claim:Case1-n-odd-e(A)>e(B)} on the lower and upper bound on $q_1$, we again get the desired bounds. 

Next we show that $d_Q(v_1) \le \Delta+1$. We have that $d_Q(v_1) =d_{G_1}(v_1)+q_1$ and $d_{G}(v_1) =\delta$. When $q_1=0$,  the assertion holds trivially. 
Thus we assume that $q_1\ge 1$. If $q_1=1$, then by our definition of $q_1$, we know that $G$ is not regular. 
Thus $d_Q(v_1)  \le d_{G}(v_1)+1+q_1 \le \Delta+1$. 
Therefore we assume that $q_1=e(G_1[A]) -e(G_1[B])$ 
or $q_1= \lceil \frac{1}{2}(\Delta -d_{G_1}(v_1)) \rceil$.

If $q_1=e(G_1[A]) -e(G_1[B])$, by 
 Claim~\ref{claim:Case1-n-odd-e(A)>e(B)} on the lower and upper bound on $q_1$, we get 
 $d_Q(v_1)  \le d_{G_1}(v_1) + \frac{1}{2}(\Delta+1+2\xi n -d_{G_1}(v_1))  =\frac{1}{2}(\Delta+1+2\xi n+d_{G_1}(v_1))$.   Under the case that $q_1=e(G_1[A]) -e(G_1[B])$,
 we have $d_{G_1}(v_1) < \Delta+1-2\xi n$ or $d_{G_1}(v_1) =n-2|M| \le 0.2 \ve n+1$. In both cases, we get $d_Q(v_1)  \le  \Delta+1$. 
 
If $q_1= \lceil \frac{1}{2}(\Delta -d_{G_1}(v_1)) \rceil$, then 
   $G$ is in Case 2 and $n$ is odd.   Thus we have $d_{G_1}(v_1) =2\Delta+2-n$, and so 
 we get $d_Q(v_1) =d_{G_1}(v_1)+\lceil \frac{1}{2}(\Delta -d_{G_1}(v_1)) \rceil   \le  \frac{1}{2}(\Delta+d_{G_1}(v_1))+1  =\frac{1}{2}(3\Delta+2-n)+1 \le \Delta+1$ as
 $n\ge \Delta+2$. 
  
For (2) with $v\not\in U$, we  have $d_{G_1}(v) \ge \Delta-\xi n$. Thus, 
  by~\eqref{eqn:partition-neighbor-difference} and the construction of $Q$, we have 
$$
 \frac{1}{2}\left (d_{G_1}(v)- ((m/2)^{2/3}+1) \right) \le d_{Q_{AB}}(v) \le \frac{1}{2}\left(d_{G_1}(v)+ ((m/2)^{2/3}+1)\right),  
$$
and so 
$$
\frac{1}{2} \Delta -1.1 \xi n  \le d_{Q_{AB}}(v) \le \frac{1}{2} \Delta +n^{2/3}. 
$$

For (3) with $v\ne v_1$ in Case 1 and $v\ne v_1$ in Case 2 when $n$ is odd, we have $d_{G_1}(v) \ge \delta$, 
and so  the desired lower and upper bounds on $d_{Q_{AB}}(v)$ are calculated the same way as in (2).  
\end{proofc}

We can now get the coloring of $Q_{AB}$ that we are looking for.

\begin{claim} \label{claim:existence-k-coloring}
    The multigraph $Q_{AB}$ has a $k$-edge-coloring such that all the edges of $M_1$ are colored differently. 
\end{claim}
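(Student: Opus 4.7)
My plan is to apply Lemma~\ref{lemma:matching-extension}(a) directly to the multigraph $G = Q_{AB}$, with the center of the star being the vertex $x$ coming from the construction of $G^M$ (if $x \notin V(Q_{AB})$ I adjoin it as an isolated vertex, making its star empty). The special edge set will be $J_0 := (M_1 \setminus E(x)) \cup E_{Q_{AB}}(x)$, a disjoint union of a matching (a subset of $M \subseteq E(\overline{G})$) and the full $x$-star of $Q_{AB}$. By the selection of $M_1$ in Step 3 of the construction phase, every edge of $E(x) \cap E(Q[B])$ is pulled into $M_1$ before other $E(x)$-edges whenever $x \in V(Q)$, so $M_1 \subseteq J_0$ and distinctness of $J_0$ forces distinctness of $M_1$.

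I then verify the structural hypotheses of Lemma~\ref{lemma:matching-extension}(a). By Step 2 of the construction, any multi-edges of $Q_{AB}$ arise only from added edges that form either a matching of size at most $2\xi n$ inside one of $A, B$ or a star $E(v_1)$ centered at $v_1$; in the star-subcases where $v_1 \neq x$ the set $B_0$ is small enough to be chosen disjointly from $N_{G_1}(v_1)$ (avoiding parallel pairs), and in the remaining star-subcases one has $v_1 = x$, so every vertex $u \neq x$ is incident to at most one added edge. This yields $\mu(Q_{AB}) \le 2$ together with the ``at most one $v$ with $e_{Q_{AB}}(u,v)=2$'' condition. The bound $k \ge \Delta(Q_{AB}) + 4$ follows from Claim~\ref{claim:G_AB-maximum-min-degree} and the definition~\eqref{eqn:definition-k} of $k$. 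For the initial coloring, I take $J := J_0$ and color each edge of $J_0$ with its own color from $\{1,\ldots,|J_0|\}$; this is a valid edge-coloring of $G[J_0]$ in which $J_0$ is distinct, and $Q_{AB} - J_0$ is simple because every parallel edge of $Q_{AB}$ is absorbed by either the star part (for edges at $x$) or the matching part of $J_0$.

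Applying Lemma~\ref{lemma:matching-extension}(a) then extends $\varphi_0$ to a $k$-edge-coloring of $Q_{AB}$ in which $J_0 \supseteq M_1$ remains distinctly colored, establishing the claim. The main obstacle I anticipate is the case-by-case bookkeeping to confirm that $M_1$ absorbs all of $E(x) \cap E(Q[B])$ so that $M_1 \subseteq J_0$ and that $B_0$ can always be chosen to control parallel edges; the trickiest subcase is Case~2 with $n$ odd, where $E_{Q_{AB}}(x)$ is inflated by the added star $E(v_1)$ and where the size of $J_0$ and the structure of parallel edges at $x$ must be tracked most carefully to respect the constraints of Lemma~\ref{lemma:matching-extension}(a).
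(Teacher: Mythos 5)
Your proposal has a genuine gap in how it sets up the application of Lemma~\ref{lemma:matching-extension}(a), and the obstruction is not mere bookkeeping.

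You take $J = J_0 = (M_1\setminus E(x)) \cup E_{Q_{AB}}(x)$, which forces $J_0$ to contain \emph{all} of $x$'s incident edges in $Q_{AB}$. Since the $M$-part $M_1\setminus E(x)$ is disjoint from $E_{Q_{AB}}(x)$, you get $|J_0| = |M_1\setminus E(x)| + |E_{Q_{AB}}(x)| = |M_1| + |E_{Q_{AB}}(x)\setminus M_1| = k + |E_{Q_{AB}}(x)\setminus M_1|$. But the lemma needs a $k_0$-edge-coloring of $G[J]$ with $k_0 \ge |J_0|$ that can be viewed as a partial coloring in the final $k$-palette, so effectively $|J_0|\le k$ is required (the conclusion is a $k$-edge-coloring in which all of $J_0$ is distinctly colored, which is literally impossible if $|J_0|>k$). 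Hence you need $E_{Q_{AB}}(x)\subseteq M_1$, and this fails whenever $v_1=x$ and Step~2 added the star $E(v_1)=\{v_1v : v\in B_0\}$: those added edges are incident to $x$, lie inside $Q[B]\subseteq Q_{AB}$, and are \emph{not} original $M\cup E(x)$-edges, so they are never put into $M_1$. In Case~1 with $n$ odd, $|B_0| = e(G_1[A])-e(G_1[B])$ can be of order $\Delta$, so $|J_0|$ overshoots $k$ by a large amount; in Case~2 with $n$ odd the overshoot is $\lceil\tfrac12(\Delta-d_{G_1}(v_1))\rceil$, also typically of order $\Delta$. This is fatal, not a tricky subcase.

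The paper sidesteps this precisely by decoupling $J$ from $J_0$. It takes $J_0 = M_1$ (so $|J_0|=k$ exactly) and then a strictly larger $J$: either $E_{Q_{AB}}(v_1, B\setminus\{v_1\})\cup M_1$ (which absorbs all of $v_1$'s $Q[B]$-edges, including the added parallel copies) or $(E(Q_{AB})\setminus E(Q^s_{AB}))\cup M_1$ (which absorbs the added matching and parallel pairs). This makes $Q_{AB}-J$ simple and puts all of $v_1$'s incident edges in $J$ without inflating $J_0$; the distinct-coloring constraint is imposed only on $J_0$, so the size of $J$ is irrelevant. A related secondary issue: when $v_1\ne x$ (Case~1, $n$ even, $G$ not regular) you try to avoid parallel edges by \emph{altering the construction} so that $B_0$ is disjoint from $N_{G_1}(v_1)$. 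The claim, however, is about $Q_{AB}$ as the paper constructs it; changing the construction is not a proof of the claim. Again, the paper's choice of $J$ absorbs those parallel pairs without any modification.
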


\begin{proofc} Note that $k= |M_1| \ge \Delta(Q_{AB})+4$. 
 We let $Q^s_{AB}$ be the underlying simple graph of 
$Q_{AB}$.
In Case 1 when $d_{G_1}(v_1)< \Delta-2\xi n$ or in Case 2 with $n$ odd,  we let $J=E_{Q_{AB}}(v_1, B\setminus \{v_1\}) \cup M_1$ and $J_0 =M_1$.  
In Case 1 when $d_{G_1}(v_1) \ge  \Delta-2\xi n$, we let $J=(E(Q_{AB}) \setminus E(Q^s_{AB}) )\cup M_1$ and let $J_0 = M_1$. Note that these are the edges added between pairs of vertices in $B_0$ in Step 2 along with $M_1$. In Case 2 with even $n$, we let $J=J_0 =M_1$.  Note that possibly other than $v_1$, all  other vertices of $J$ have degree at most 2 in $Q_{AB}[J]$. 
In order to apply Lemma~\ref{lemma:matching-extension}(a), it suffices to show that 
$Q_{AB}[J]$ has a $k$-edge-coloring such that each edge of $J_0$ is colored differently.

In Case 1 when $d_{G_1}(v_1)< \Delta-2\xi n$ or in Case 2 with odd $n$, 
we first pre-color all edges  of $M_1$ with all different colors. 
Now each edge from $E_{Q_{AB}}(v_1, B\setminus \{v_1\})$ is adjacent in $Q_{AB}[J]$ 
with at most $|M_1\cap E(x)| +1$ edges. 
As $|E_{Q_{AB}}(v_1, B\setminus \{v_1\}) \setminus E(x)| \le |E(v_1)|\le \Delta(Q_{AB}) -|M_1\cap E(x)|$, we can extend the pre-coloring on edges of $M_1$ to all edges of $J$. 
In Case 1 when $d_{G_1}(v_1) \ge  \Delta-2\xi n$ or in Case 2 with even $n$, as $ \Delta(Q_{AB}[J]) \le 2$, we can first pre-edge color edges of $J_0$ such that each of them is colored with a   different color. Then we can extend the edge coloring to all edges of $J$. 
\end{proofc}

Let $\varphi_0$ be the $k$-edge-coloring of $Q_{AB}$ we get from Claim~\ref{claim:existence-k-coloring}. We can make some additional assumptions about $\varphi_0$, as follows.

Suppose first that we are in case 1. By Lemma~\ref{lem:equitable-coloring-precolored-edges}, we may choose our $k$-edge-coloring $\varphi_0$  such that
\begin{equation}\label{leq4}
\left||\pbar_0^{-1}(i)| -|\pbar_0^{-1}(j)| \right| \le 5
\end{equation}
for any  $i,j\in [1,k]$.  For any vertex $v\in V(Q_{AB})$,  $|\pbar_0(v)| =k-d_{Q_{AB}}(v)$. 
If $v\not\in U$, 
by Claim~\ref{claim:G_AB-maximum-min-degree}, we get that
\begin{equation}\label{pbar_0v}
|\pbar_0(v)|\leq k-\left(\frac{1}{2}\Delta -1.1 \xi n\right) \le  2.3\xi n. 
\end{equation}
 When $v \in U$, by  Claim~\ref{claim:G_AB-maximum-min-degree}, we have 
\begin{eqnarray}
  |\pbar_0(v)|\leq k-\left(\frac{1}{2}\delta-n^{2/3}\right) <\frac{1}{4} n.  \label{eqn:miss-color-W-vertex}
\end{eqnarray}
Overall, we have 
\begin{equation}\label{eqn:total-number-missing-colors-Cases1-2}
    4n\le \sum_{v\in V(Q_{AB})} |\pbar_0(v)| \le (m-|U|)(2.3\xi n)+|U|(n/4) <3 \xi n^2. 
\end{equation}
Since $Q_{AB}$ has exactly $m$ vertices, this means that the average number of vertices not seeing any particular color is at most
$$
\frac{3 \xi n^2}{k}  < 12\xi n-5. 
$$
By~(\ref{leq4}), we get  
\begin{equation}\label{eqn1}
	|\pbar_0^{-1}(i)|< 12\xi n \quad \text{for each $i\in [1,k]$}.
\end{equation}

\medskip 

Now suppose that we are in case 2. We  modify $\varphi_0$ as follows. Let 
$$U^*=\{u\in V(Q): \Delta -d_{Q}(u) \ge  3.5n^{2/3}\}.$$
If there exist distinct $u,v\in U^*\cap A$  or distinct $u,v \in U^*\cap B$ such that $\pbar_0(u)\cap \pbar_0(v)\ne \emptyset$, we add an edge joining $u$ and $v$ and color the new edge by a color in $\pbar_0(u)\cap \pbar_0(v)$.

   For convenience, when the edge coloring $\varphi_0$ is updated, we still call it $\varphi_0$. 
	We iterate this process of adding and coloring edges and call the  multigraphs  resulting from $Q[A]$ and $Q[B]$, respectively, $Q^*_A$ and $Q^*_B$, and call 
	$Q^{**}$ the union of $Q^*_A$, $Q^*_B$ and $Q[A,B]$, and $Q^*_{AB}$ the resulting multigraph of $Q_{AB}$.  

    For the current edge coloring $\varphi_0$ of $Q^*_{AB}$,  
the following statement holds:  $\pbar_0(u)\cap \pbar_0(v)=\emptyset$
for any two distinct $u,v\in U^*\cap A$ or any two distinct $u,v\in U^*\cap B$.  
Therefore, 
\begin{eqnarray}
    \sum_{u\in U^*\cap A}|\pbar_0(u)|&=&\sum_{u\in U^*\cap A} (k-d_{Q^*_A}(u)) \le k,  \nonumber \\
    && \label{eqn:total-number-missing-colors-Case3}\\
    \sum_{u\in U^*\cap B}|\pbar_0(u)|&=&\sum_{u\in U^*\cap B} (k-d_{Q^*_B}(u)) \le k.   \nonumber
\end{eqnarray}

\begin{claim}\label{claim:vertex-degree-in-Q^*}
We have
\begin{numcases}{d_{Q^{**}}(u) \le}
    \Delta-0.2n^{2/3} & if $u\in U^*$; \label{eqn:degree-of-U-vertex-in Q^**} \\ 
    \Delta-0.4\xi n & if $u\in U$.  \nonumber
\end{numcases}
\end{claim}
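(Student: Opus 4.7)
The plan is to bound, for each vertex $u$, the number of edges added at $u$ during the iterative construction of $Q^{**}$ from $Q$, and then combine this with the hypotheses defining $U^*$ and $U$. The key observation is that every added edge at $u$ is colored with some $\gamma\in\pbar_0(u)\cap\pbar_0(v)$, after which $\gamma$ is no longer missing at $u$. Therefore the total number of edges added incident to $u$ is at most $|\pbar_0(u)|$ in $Q_{AB}$, which equals $k-d_{Q_{AB}}(u)$. This yields the master inequality
\[
d_{Q^{**}}(u) \;\le\; d_Q(u) + k - d_{Q_{AB}}(u).
\]

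For $u\ne v_1$, since $Q$ differs from $G_1$ only at edges incident to $v_1$, property~(P3) gives $d_{Q_{AB}}(u)\ge d_{Q[A]}(u)\ge \tfrac{1}{2}(d_Q(u)-(m/2)^{2/3}-1)$ (and similarly if $u\in B$, noting that a possible membership $u\in B_0$ contributes at most a further $+1$ error). Plugging this in, together with $k\le \tfrac{\Delta}{2}+n^{2/3}+5$ from the Case~2 definition of $k$, gives
\[
d_{Q^{**}}(u) \;\le\; \tfrac{1}{2}d_Q(u) + \tfrac{\Delta}{2} + 1.5\,n^{2/3} + O(1).
\]
Substituting $d_Q(u)\le \Delta-3.5\,n^{2/3}$ (for $u\in U^*$) yields the first bound, and substituting $d_Q(u)\le \Delta-\xi n+2$ (for $u\in U\setminus\{v_1\}$, which follows from $d_G(u)\le \Delta-\xi n$ after accounting for at most one $M$-edge and possibly one edge to $v_1$ if $u\in B_0$) yields the second bound, using $\xi n\gg n^{2/3}$.

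The one remaining case is $u=v_1$. In Case~2 with $n$ odd we have $v_1=x$, and here I would bypass~(P3) and instead apply Claim~\ref{claim:G_AB-maximum-min-degree}(1) directly, which gives $d_{Q_{AB}}(v_1)\ge \tfrac{\Delta}{2}-n^{2/3}$, so that at most $2n^{2/3}+5$ edges are added at $v_1$; combined with the fact that $d_Q(v_1)\le \Delta-3.5\,n^{2/3}$ (verifiable from $\Delta<\tfrac{3n}{4}$ and the explicit formula $d_Q(v_1)=d_{G_1}(v_1)+\lceil\tfrac{1}{2}(\Delta-d_{G_1}(v_1))\rceil$), this yields the $U^*$ bound. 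If $u=v_1\in U$, which can only happen in Case~2 with $n$ even (where $q_1=0$ by Table~\ref{q_1 values}), then $d_Q(v_1)=d_{G_1}(v_1)$ and the generic analysis above applies. The main obstacle is the careful bookkeeping of the $O(n^{2/3})$ corrections arising from (P3), from the Step~2 edges at $v_1$, and from $M$-edges whose endpoints lie on the same side of the bipartition; all of these are absorbed by the slack in the bounds $0.2\,n^{2/3}$ and $0.4\,\xi n$ for sufficiently large $n$.
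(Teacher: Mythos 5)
Your proposal is correct and is essentially the same argument as the paper's. You decompose as $d_{Q^{**}}(u)\le d_Q(u)+k-d_{Q_{AB}}(u)$, observing that each edge added at $u$ consumes a color from $\pbar_0(u)$; the paper instead writes $d_{Q^{**}}(u)\le k+e_Q(u,B)$ (for $u\in A$), noting directly that all same-side edges at $u$ in $Q^*_A$ carry distinct colors. These two inequalities are equivalent up to the $M_1$-edges of $Q[A,B]$ at $u$, and both are combined with property~(P3), the value of $k$, and the degree bound from $U^*$ (resp.~$U$) exactly as you do. Your separate treatment of $v_1$ via Claim~\ref{claim:G_AB-maximum-min-degree}(1) is a valid alternative to the paper's implicit handling (which applies (P3) to $e_{G_1}(x,A)$ rather than to $d_Q(x)$), and the rest of your bookkeeping is sound.
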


\proof 
For any $u\in U^*\cap A$, we have 
$$
d_{Q^{**}}(u) \le 
k +e_Q(u, B)  \le  \frac{1}{2}\Delta+n^{2/3} +5+\frac{1}{2}(\Delta-3.5n^{2/3}+n^{2/3}) \le \Delta-0.2n^{2/3}.
$$
Similarly, we have $d_{Q^{**}}(u) \le \Delta-0.2n^{2/3} $ for any $u\in U^*\cap B$.  

When $u\in U\cap A$, then we have 
$$
d_{Q^{**}}(u) \le 
k +e_Q(u, B)  \le  \frac{1}{2}\Delta+n^{2/3} +5+\frac{1}{2}(\Delta-\xi n+n^{2/3}) \le \Delta-0.4\xi n.
$$
Then case when $u\in U\cap B$ is proved similarly. 
 \qed

Again by  Lemma~\ref{lem:equitable-coloring-precolored-edges}, we may choose our $k$-edge-coloring $\varphi_0$  under the modification above 
that satisfies~\eqref{leq4}.  Under this assumption on  $\varphi_0$, 
 it is possible that $\pbar_0(u)\cap \pbar_0(v) \ne \emptyset$
for some distinct  $ u,v\in U^*\cap A$ or distinct $u,v\in U^*\cap B $. 
However  the inequalities in~\eqref{eqn:total-number-missing-colors-Case3} still hold.  
We now examine the number of missing colors at each vertex of $Q^*_{AB}$. 

For any vertex $v\in V(Q_{AB})\setminus  U^*$, we have $d_Q(u) > \Delta -3.5n^{2/3}$. 
Thus $d_{Q_{AB}}(u) \ge \frac{1}{2}(d_Q(u)-n^{2/3}) \ge \frac{1}{2} \Delta-2.3 n^{2/3}$, and so 
\begin{eqnarray}
    |\pbar_0(u)| &=& k-d_{Q_{AB}}(u)  \nonumber \\
    & \le & \frac{1}{2}\Delta+n^{2/3}+5 -(\frac{1}{2} \Delta-2.3 n^{2/3}) <3.4n^{2/3}. \label{eqn:number-missing-color-Case2}
\end{eqnarray}
Therefore, 
\begin{equation}\label{eqn:total-number-missing-colors-Case3-1}
   4n \le  \sum_{v\in V(Q_{AB}-S_0)} |\pbar_0(v)| \le (n-|U^*|)(3.4 n^{2/3})+2k<3.5 n^{5/3}. 
\end{equation}
Since $Q_{AB}-S_0$ (recall that $S_0$ is defined in Step 3) has exactly $m \le n+1$ vertices, this means that the average number of vertices of $Q_{AB}-S_0$ not seeing any particular color is at most
$$
\frac{3.5 n^{5/3} }{k} < 14 n^{2/3}-5. 
$$
By~(\ref{leq4}), we get  
\begin{equation}\label{eqn:case3-average-number-of-vertices-missing-i}
	|\pbar_0^{-1}(i)|< 14 n^{2/3} \quad \text{for each $i\in [1,k]$}.
\end{equation}

For notational uniformity, we will still use $Q$ for $Q^{**}$, and use $Q[A,B]$ for $Q^*[A,B]$ (we will not distinguish between them from now onwards).

We denote by $\varphi$ the current partial edge coloring of $Q$ (whether we are in case 1 or case 2). In this way, we can refer back the number of colors missing at vertices in Step 1 by using the notation $\varphi_0$.  \\

\noindent \textbf{Step 2: Extending the $k$ color classes.}

We start this step by defining
 \begin{numcases}{W=}
     U & in Case 1;  \nonumber \\ 
     \{w\in U: |\pbar_0(w)|  \ge  3.4n^{2/3} \} & in Case 2. \nonumber 
 \end{numcases}
 In Case 1, we have $|W| =|U|<\xi n$. 
 In Case 2, by~\eqref{eqn:number-missing-color-Case2}, we know 
 that $W\subseteq U^*$. Then by~\eqref{eqn:total-number-missing-colors-Case3},
 we get $|W| \le \frac{2k}{3.4n^{2/3}} < n^{1/3}$. 
The vertices of $W$ will receive particular attention in this step. 

During this step, by swapping colors along alternating paths, we will increase the size of the $k$ color classes obtained  in the 
previous step until each color class is a perfect matching  of $Q$ in Case 1, and saturates $V(Q)\setminus (U\setminus W)$ in Case 2. During this process, we will maintain
the colors on the edges of $M_1$. We will however, uncolor some of the edges of $Q_A$ and $Q_B$. 

Let $R_A, R_B$ be the the subgraphs induced by all uncolored edges in $Q_A, Q_B$, respectively. These multigraphs will both initially be empty. 
As we progress through this step however,  
one or two edges will be added to each of $R_A$ and $R_B$ 
each time we swap colors on an alternating path.  
As the structure of $Q$ in Case 1 is quite different with that in Case 2, 
and our approach in dealing with these two   cases are   different too, 
we define the following parameters according to different cases:
\begin{numcases}{s=}
3  \xi n^2    &  \text{in Case 1}; \nonumber  \\
&\\ 
3.5 n^{5/3}    &  \text{in Case 2}.  \nonumber 
    \end{numcases}

    \begin{numcases}{r=}
 \lceil \xi^{1/2} n  \rceil   &  \text{in Case 1};  \nonumber  \\
  &\\ 
 \lceil  n^{5/6}  \rceil   &  \text{in Case 2}.  \nonumber 
    \end{numcases}
We will tightly control these uncolored edges, and will ensure that the following conditions are satisfied after the completion of this step:
\begin{enumerate}
	\item[(C1)] $|E(R_A)|, |E(R_B)|  \le s$. Additionally,  $|E(R_A)|= |E(R_B)|$ for Case 1 ;

		\item[(C2)] $\Delta(R_A),\Delta(R_B)  < r$;

	\item[(C3)] Each vertex  $v$ of $V(Q)\setminus W$ is incident with at most  $|\pbar_0(v)|+r$ colored edges of $Q[A, B]-E(Q_{AB})$; 
    and each vertex $v$ of $W$ is incident with at most  $|\pbar_0(v)|$ colored edges of $Q[A, B]-E(Q_{AB})$; 
\end{enumerate}

At any point during our process, we say that an edge  $e=uv$ is \emph{good} if $e\not \in E(R_A) \cup E(R_B)$ and the degree of $u$
and $v$ in both $R_A$ and  $R_B$ is less than $r-1$.  In particular, a good edge can be added to $R_A$ or $R_B$ without violating condition (C2).

By Lemma~\ref{lem:parity}, we know that $|\pbar^{-1}(i)| \equiv |V(Q_{AB})|\pmod{2}$   for every color $i\in [1,k]$. When $V(Q_{AB})=V(Q)$ we get that $|V(Q_{AB})|=m$ even, so since $(A, B)$ is a partition of $V(Q)$, the quantity $|\pbar^{-1}_A(i)|-|\pbar^{-1}_B(i)|$ is also even. We can pair up as many vertices as possible from $\pbar^{-1}_A(i),\pbar^{-1}_B(i)$, and then pair up 
the remaining unpaired vertices from  $\pbar^{-1}_A(i)$ or $\pbar^{-1}_B(i)$.  
When $V(Q_{AB})\neq V(Q)$ we get that $|V(Q_{AB})|$ is odd, and we can pair up as many vertices as possible from $\pbar^{-1}_A(i),\pbar^{-1}_B(i)$, and then pair up 
the remaining unpaired vertices from  $\pbar^{-1}_A(i)$ or $\pbar^{-1}_B(i)$ 
but with exactly one vertex leftover on one side. 
We call each of these pairs of vertices a \emph{missing-common-color pair} or \emph{MCC-pair} with respect to the color $i$. In fact, given any MCC-pair $(a,b)$ with respect to some color $i$, we will show that the vertices $a, b$ are joined by some path $P$ (with at most 7 edges), where both ends of the path have an uncolored edge from $Q[A, B]$, and then the path alternates between good edges (colored $i$) and uncolored edges. Given such a path $P$, we will swap along $P$ by giving all the uncolored edges $i$, and uncoloring all the colored edges. After this, both $a$ and $b$ will be incident with edges colored $i$, and at most three good edges will be added to $R_A\cup R_B$. Given the alternating nature of the path $P$, at most two good edges will be added to any one of $R_A, R_B$.
The main difficulty in this Step 2 is to show that such path $P$ exists. Before we do this, let us show that conditions (C1)-(C3) can be guaranteed at the end of this swapping process, that is, at the end of this step. In fact, 
we will only ever add good edges to and $R_A$ and $R_B$, so condition (C2) will  hold automatically.  It remains then only to check (C1) and (C3).

Let us first consider condition (C1).  By~\eqref{eqn:total-number-missing-colors-Cases1-2} and~\eqref{eqn:total-number-missing-colors-Case3-1}, 
the sum of
missing colors  under $\varphi_0$ taken over all vertices of $Q$ is at most  $s$. 
Thus there are at most   $s/2$ MCC-pairs. As we add at most two edges to each of $R_A$ and $R_B$ for each MCC-pair,   we added at most $s$ edges to each of $R_A$
and $R_B$.  Thus, condition (C1) is satisfied. 
As we said above, when we are in case 1, we will extend each of the $k$ color classes of $\varphi_0$ to a perfect matching of $Q$ in this step. Thus, in case 1,
each of $Q[A]$ and $Q[B]$ will contain the same number of edges from each color class by the end of this step. Since $e(Q[A])=e(Q[B])$ in case 1, by our construction of $Q$, we will have $|E(R_A)|= |E(R_B)|$ at this end of this step.

We  now show that Condition (C3)  will also be satisfied at the end of Step 3. In the process of Step 3,   the number of newly colored edges of $Q[A, B]$ that are incident with a vertex   $u\in V(Q)$, where $u \not\in W$,   will equal  the number of our chosen alternating paths containing $u$. The number of such alternating paths of which $u$ is the first vertex will equal the number of colors missing from $u$ at the end of Step 2. When $u\in W$, we will make sure that $u$ is not used in any alternating path where it is not an endpoint, so the number of newly colored edges of $Q[A, B]$ that are incident with $u$
will be $|\pbar_0(u)|$.  If $u\not\in W$,  the number of alternating paths in which $u$ is not the first  vertex will  equal  the degree of $u$ in $R_A\cup R_B$, and so will be less than $r$. Thus Condition (C3)   will be satisfied. 

Let $i\in[1,k]$. We will describe how to find the alternating paths for the MCC-pairs with respect to $i$. There is one $i$-colored edge in $M_1$, say $e_i$ with endpoints $V(e_i)$, and we will be sure to avoid it in all of the alternating paths joining our MCC-pairs. For each vertex in an MCC-pair, we will see how to construct a short (two-edge) alternating path from it that avoids $e_i$. We will see that there are so many choices for such paths that we can connect them to link each MCC-pair. 

Let $v\in V(Q)$. Suppose that $v\in S_v\in\{A, B\}$, and let $T_v$ be the other one of $A, B$. Define $N_1(v)$ to be the set of all vertices in $T_v\setminus (V(e_i)\cup  W)$ that are joined to $v$ by an uncolored edge, and are incident to a good edge colored $i$.  Let $N_2(v)$ be the set of vertices in $T_v\setminus (V(e_i)\cup W)$ that are joined to a vertex of $N_1(v)$ by a good edge of color $i$. Note that we have $|N_1(v)|=|N_2(v)|$, but some vertices may be in $N_1(v)\cap N_2(v)$. 

There are fewer than $s$ edges in $R_B$ (by (C1)), so there are fewer than $2s/(r-1)$ vertices of degree at least $r-1$ in $R_B$. 
Each non-good edge is incident with one or two 
vertices of $R_B$ through the color $i$. Thus  the number of vertices in $B$ incident with a non-good edge colored by $i$
is less than  $2\times(2s/(r-1))=4s/(r-1)$. 
In addition, by~\eqref{eqn1} and~\eqref{eqn:case3-average-number-of-vertices-missing-i} there are fewer than $4s/n$
vertices in $B$ that are missed by the color $i$. So the number of vertices in $B$ which are either incident to a non-good edge colored $i$, missing the color $i$, in $V(e_i)\cup W$, or incident to a good edge colored $i$ whose other endpoint is in $W$ is less than
\begin{equation}\label{eqn3}
	4s/(r-1)+4s/n+2|W|+2<5s/r. 
\end{equation} 
Let  \begin{numcases}{t=}
 1.1 \xi  n   &  \text{in Case 1};  \nonumber  \\
  &\\ 
   n^{2/3}  &  \text{in Case 2}.  \nonumber 
    \end{numcases}
By symmetry, the number of vertices that
are either incident to a non-good edge colored $i$, missing the color $i$, in $V(e_i)\cup W$, or incident to a good edge colored $i$ whose other endpoint is in $W$ is less than
$5s/r$. Therefore for any $v\in V(Q) \setminus W$, we have
\begin{eqnarray}
|N_1(v)|=|N_2(v)|  & \ge& d_{Q[A, B]}(v) -(|\pbar_0(v)|+r) -5s/r \nonumber  \\
 &\ge & \tfrac{1}{4} (1+\ve)n-n^{2/3}- (|\pbar_0(v)|+r) -5s/r \nonumber \\
 &>&\tfrac{1}{4}(1+0.5\ve) n, \label{eqn:N-M-sizea}.
\end{eqnarray}
where  in the inequalities to get~\eqref{eqn:N-M-sizea},    the number $|\pbar_0(v)|+r$ is the maximum number of 
colored edges of $Q[A, B]$ that are incident with $v$ at the end of Step 3 (as per (C3)). When $v\in W$ with $v\ne v_1$ when $n$ is odd, we have  $|\pbar_0(v)| \le \frac{1}{4} n$ by~\eqref{eqn:miss-color-W-vertex}. 
Thus,
\begin{eqnarray} 
|N_1(v)|=|N_2(v)| &\ge  &\tfrac{1}{4} (1+\ve)n-n^{2/3}- 5s/r-\tfrac{1}{4}n  \ge  0.5\ve n.  \label{eqn:size-of-N_1(v_0)} 
\end{eqnarray} For the vertex $v_1$ when $n$ is odd, by our choice of the matching $M$ and Claim~\ref{claim:G_AB-maximum-min-degree}(1) that $|\pbar_0(v)|\le 2.1 t$, we have 
\begin{eqnarray} 
|N_1(v_1)|=|N_2(v_1)| &\ge  &0.2\ve n-n^{2/3}-5s/r-2.1t  \ge  0.1\ve n.  \label{eqn:size-of-N_1(v_0)0} 
\end{eqnarray}

Before we show how to build an alternating path between each MCC-pair, we will first do some preliminary alternating path switches so as to refine our selection of the MCC-pairs. 

If $n$ is odd, we don't want $v_1=x$ to be in any MCC-pair, so we will first change this.  Recall that $v_1 \in B$ by our assumption 
and $|\pbar_0(v_1)|\le 2.1 t$ by Claim~\ref{claim:G_AB-maximum-min-degree}(1). 
For each $i\in \pbar_0(v_1)$, we choose $w_1\in N_1(v_1)$ and $w_2\in N_2(v_2)$. 
Then we color $v_1w_1$ by $i$ and uncolor $w_1w_2$.  Since all the $k$ edges in $M_1$ are colored by $k$ distinct colors, 
$i\in \pbar_0(v_1)$ implies that $i$ is used on an edge $e$ of $M\cap M_1$.  We uncolor $e$, and update $M_1$ by deleting $e$
and adding $v_1w_1$ (the uncolored edge $e$ is placed back as one of the uncolored edges of $M$). Note that the new edge set $M_1$ still has exactly $k$ edges and each of them is colored by a different color. 
In this process, we make sure that the edge $w_1w_2$ that was  uncolored  (and hence added to 
$R_A$) is a good edge. This is possible by~\eqref{eqn:size-of-N_1(v_0)0}.  As $|\pbar_0(v_1)|\le 2.1 t$,  $R_A\cup R_B$
contains at most $2.1t$ edges of $M$. 

Suppose now that we are in  Case 2. Then for each color $i$, if $i$ is missing at an even number of 
vertices from $V(Q)\setminus (U\setminus W)$, we only pair up those vertices as MCC-pairs. If $i$ is missing at an odd  number of 
vertices from $V(Q)\setminus (U\setminus W)$ and is also missing at a vertex of $u\in (U\setminus W)$, then  
we pair up vertices of $(V(Q)\setminus (U\setminus W)) \cup \{u\}$ that are all missing $i$ as MCC-pairs. If $i$ is missing at an odd  number of 
vertices from $V(Q)\setminus (U\setminus W)$ but is not missing at any vertex of $(U\setminus W)$, then we choose $u\in (U\setminus W)$ and $j\in \pbar_0(u)$ and do a Kempe-change at a maximal $(i,j)$-alternating path $P$ starting at $u$. After this, we can now either pair up vertices of $V(Q)\setminus (U\setminus W)$
that are all missing $i$ (this happens if the other endvertex of $P$ is contained in $V(Q)\setminus (U\setminus W)$) or pair up vertices of $(V(Q)\setminus (U\setminus W)) \cup \{u\}$ that are all missing $i$ as MCC-pairs with respect to $i$.  

Suppose now that we have an MCC-pair $(a,b)$ with $\{a,b\}\cap W = \emptyset$. We will replace the pair $(a,b)$ by $(a^*,b^*)$ such that $\{a^*,b^*\}\cap W=\emptyset$ (in order to help streamline the discussion below).  Precisely, we will implement the following 
 operations to vertices in $W$ when $n$ is even and to vertices of $W\setminus \{v_1\} $ when $n$ is odd. 
 For any vertex $a\in W\cap A$, and for each color $i\in \pbar_0(a)$, we take an edge $b_1b_2$ with $b_1\in N_1(a)$ and $b_2\in N_2(a)$ such that $b_1b_2$ is colored by $i$, where the edge $b_1b_2$ exists by~\eqref{eqn:N-M-sizea} and~\eqref{eqn:size-of-N_1(v_0)} and the fact that $|N_2(a)|=|N_1(a)|$.   Then we exchange the path $ab_1b_2$ by coloring $ab_1$
 with $i$ and uncoloring the edge $b_1b_2$ (See Figure~\ref{f1}(a)).
  After this, the edge $ab_1$  of $Q[A,B]$ is now colored by $i$, and the uncolored edge $b_1b_2$ is added to $R_B$.
 We then update the original MCC-pair that contains $a$ with respect to the color $i$ by replacing the vertex $a$ with $b_2$. 
 We do this at the vertex $a$ for every color $i\in \pbar_0(a)$ and then repeat the same process for every vertex in $W\cap A$. 
 Similarly, 
 for any vertex $b\in W\cap B$ when $n$ is even and $b\in W\cap B$ with $b\ne v_1$ when $n$ is odd, and for each color $i\in \pbar_0(b)$, we take an edge $a_1a_2$ with $a_1\in N_1(b)$ and $a_2\in N_2(b)$ such that $a_1a_2$ is colored by $i$, where the edge $a_1a_2$ exists by~\eqref{eqn:N-M-sizea} and~\eqref{eqn:size-of-N_1(v_0)}  and the fact that $|N_1(b)|=|N_2(b)|$.   Then we exchange the path $ba_1a_2$ by coloring $ba_1$
 with $i$ and uncoloring the edge $a_1a_2$. The same, we update the original MCC-pair that contains $b$ with respect to the color $i$ by replacing the vertex $b$ with $a_2$. 

We are now finally able to describe, for each MCC-pair $(u, v)$, how to build 2-edge-paths from each of $u, v$ so that these paths connect as desired. This path construction will depend on whether $u,v$ are on the opposites sides of the bipartition $(A, B)$ or not; our paths will look like those pictured in Figure 1(b) and 1(c), respectively.

\begin{figure}[!htb]
	\begin{center}
		
		\begin{tikzpicture}[scale=0.6]
		
			\begin{scope}[shift={(-10,0)}]
		\draw[rounded corners, fill=white!90!gray] (4, 0) rectangle (8, 2) {};
		
		\draw[rounded corners, fill=white!90!gray] (4, -4) rectangle (8, -2) {};
		
		{\tikzstyle{every node}=[draw ,circle,fill=white, minimum size=0.5cm,
			inner sep=0pt]
			\draw[black,thick](5,1) node (c)  {$a$};
		%	\draw[black,thick](11,1) node (c1)  {$a_{2}$};
			%\draw[black,thick](12,1) node (c2)  {$a_{12}$};
			%\draw[black,thick](13.5,1) node (c3)  {$a_3$};
			%\draw[black,thick](15,1) node (c4)  {$a_2$};
			\draw[black,thick](5,-3) node (d)  {$b_{1}$};
			\draw[black,thick](7,-3) node (d1)  {$b_{2}$};
			%\draw[black,thick](12,-3) node (d2)  {$b_2$};
			%\draw[black,thick](13.5,-3) node (d3)  {$b_{22}$};
			%\draw[black,thick](15,-3) node (d4)  {$b_{21}$};
			
		}
		\path[draw,thick,black!60!white,dashed]
		(c) edge node[name=la,pos=0.7, above] {\color{blue} } (d)
	%	(c1) edge node[name=la,pos=0.7, above] {\color{blue} } (d1)	
		;
		
		\path[draw,thick,black]
		(d) edge node[name=la,pos=0.7, above] {\color{blue} } (d1)
		;
		\node at (3.2,1) {$A$};
		\node at (3.2,-3) {$B$};
		\node at (6,-4.8) {$(a)$};	
		\end{scope}	
		
		\begin{scope}[shift={(5,0)}]
		\draw[rounded corners, fill=white!90!gray] (-6, 0) rectangle (0, 2) {};
		
		\draw[rounded corners, fill=white!90!gray] (-6, -4) rectangle (0, -2) {};
		
		{\tikzstyle{every node}=[draw ,circle,fill=white, minimum size=0.5cm,
			inner sep=0pt]
			\draw[black,thick](-5,1) node (a)  {$a$};
			\draw[black,thick](-3,1) node (a1)  {$a_1$};
			\draw[black,thick](-1,1) node (a2)  {$a_2$};
			\draw[black,thick](-5,-3) node (b)  {$b$};
			\draw[black,thick](-3,-3) node (b1)  {$b_1$};
			\draw[black,thick](-1,-3) node (b2)  {$b_2$};
		}

		\path[draw,thick,black!60!white,dashed]
		(a) edge node[name=la,pos=0.7, above] {\color{blue} } (b1)
		(a2) edge node[name=la,pos=0.7, above] {\color{blue} } (b2)
		(b) edge node[name=la,pos=0.6,above] {\color{blue}  } (a1)
		;
		
		\path[draw,thick,black]
		(a1) edge node[name=la,pos=0.7, above] {\color{blue} } (a2)
		(b1) edge node[name=la,pos=0.7, above] {\color{blue} } (b2)
		;
		
		%\node at (-0.5,1) {$A$};
		%\node at (-0.5,-3) {$B$};
		\node at (-3,-4.8) {$(b)$};

		%\end{scope}

		%\begin{scope}[shift={(-8,-8)}]
		\draw[rounded corners, fill=white!90!gray] (1, 0) rectangle (9, 2) {};
		
		\draw[rounded corners, fill=white!90!gray] (1, -4) rectangle (9, -2) {};
		
		{\tikzstyle{every node}=[draw ,circle,fill=white, minimum size=0.5cm,
			inner sep=0pt]
			\draw[black,thick](2,1) node (c)  {$a$};
			\draw[black,thick](4,1) node (c1)  {$a_{2}$};
			\draw[black,thick](6,1) node (c2)  {$a^*_{2}$};
			\draw[black,thick](8,1) node (c3)  {$a^*$};
			%\draw[black,thick](12,1) node (c2)  {$a_{12}$};
			%\draw[black,thick](13.5,1) node (c3)  {$a_3$};
			%\draw[black,thick](15,1) node (c4)  {$a_2$};
			\draw[black,thick](2,-3) node (d)  {$b_{1}$};
			\draw[black,thick](4,-3) node (d1)  {$b_{2}$};
			\draw[black,thick](6,-3) node (d2)  {$b^*_{2}$};
			\draw[black,thick](8,-3) node (d3)  {$b^*_{1}$};
			%\draw[black,thick](12,-3) node (d2)  {$b_2$};
			%\draw[black,thick](13.5,-3) node (d3)  {$b_{22}$};
			%\draw[black,thick](15,-3) node (d4)  {$b_{21}$};
			
		}
		\path[draw,thick,black!60!white,dashed]
		(c) edge node[name=la,pos=0.7, above] {\color{blue} } (d)
		(c1) edge node[name=la,pos=0.7, above] {\color{blue} } (d1)	
		(c3) edge node[name=la,pos=0.7, above] {\color{blue} } (d3)	
		(c2) edge node[name=la,pos=0.7, above] {\color{blue} } (d2)	
		;
		
		\path[draw,thick,black]
		(d) edge node[name=la,pos=0.7, above] {\color{blue} } (d1)
		(c2) edge node[name=la,pos=0.7, above] {\color{blue} } (c1)
		(d3) edge node[name=la,pos=0.7, above] {\color{blue} } (d2)
		;
		%\node at (7.2,1) {$A$};
		%\node at (7.2,-3) {$B$};
		\node at (5,-4.8) {$(c)$};	
		\end{scope}	
		
		\end{tikzpicture}
			  	\end{center}
	\caption{Alternating paths in Step 2 of our coloring phase. Dashed lines indicate uncoloured edges, and solid
		lines indicate edges with color $i$.}
	\label{f1}
\end{figure}
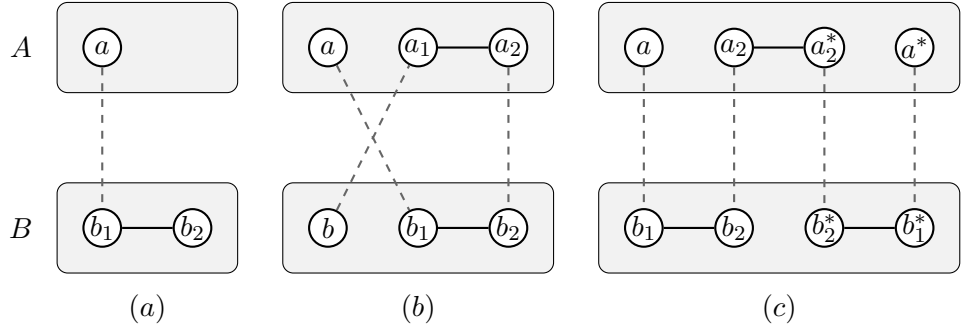

Suppose first  that we have some MCC-pair $(a,b)$ with respect to $i$, with $a\in A$ and $b\in B$. Note that by our procedure aboce, we know that $a,b\not\in W$. By~\eqref{eqn:N-M-sizea}, we have 
$
|N_2(a)|,  |N_2(b)|>\tfrac{1}{4}(1+0.5\ve) n.
$
We choose $a_1a_2$ with color $i$ such that $a_1\in N_1(b)$
and $a_2\in N_2(b)$. Now as $|N_1(a)|, |N_1(a_2)| >\tfrac{1}{4}(1+0.5\ve) n$ by~\eqref{eqn:N-M-sizea}, we know that $|N_1(a_2)\cap N_2(a) |>\frac{1}{4}\ve n$. We choose $b_2\in N_1(a_2)\cap N_2(a)$
so that  $b_2a_2 \not\in M$, 
and let $b_1\in N_1(a)$ such that $b_1b_2$ is colored by $i$. 
Then $P=ab_1b_2a_2a_1 b$ is an alternating path from $a$ to $b$ (See Figure~\ref{f1}(b)). We exchange $P$ by coloring $ab_1, b_2a_2$ and $a_1b$
with color $i$ and uncoloring the edges $a_1a_2$ and $b_1b_2$. 
After the exchange,  the color $i$ appears on edges incident with $a$ and $b$,
the edge $a_1a_2$ is added to $R_A$
and the edge $b_1b_2$ is added to $R_B$.

Suppose now that we have an MCC-pair $(a,a^*)$
with respect to  $i$ such that $a,a^* \in A\setminus W$ (the case for an MCC-pair $(b,b^*)$
with $b,b^* \in B$ can be handled similarly). 
By~\eqref{eqn:N-M-sizea}, we have  $|N_2(a^*)|>\tfrac{1}{4}(1+0.5\ve) n$.
We take an edge $b_1^*b_2^*$ colored by $i$ with $b_1^* \in N_1(a^*)$ and  $b_2^*\in N_2(a^*)$.  
Then again, by~\eqref{eqn:N-M-sizea}, we have 
$
|N_2(a)|,  |N_2(b_2^*)|>\tfrac{1}{4}(1+0.5\ve) n.  
$
Therefore, as each vertex  $c\in N_2(b_2^*)$ 
satisfies $|N_1(c)|>\tfrac{1}{4}(1+0.5\ve) n$,  we have $|N_1(c)\cap N_2(a)| >\tfrac{1}{4}\ve n$. We take $a_2a_2^*$ colored by $i$ with $ a_2^*\in N_1(b_2^*)$ such that $a_2^*b_2^*\not\in M$ and  $a_2\in   N_2(b_2^*)$.   
Then we let $b_2\in N_1(a_2)\cap N_2(a)$ such that $a_2b_2\not\in M$, and let $b_1$
be the vertex in $N_1(a)$ such that $b_1b_2$ is colored by $i$. 
Now we get the alternating path $P=ab_1b_2 a_2 a_2^* b_2^* b_1^* a^*$ (See Figure~\ref{f1}(c)).  
We exchange $P$ by coloring $ab_1, b_2a_2, a_2^*b_2^*$ and $b_1^*a^*$
with color $i$ and uncoloring the edges $b_1b_2, b_1^*b_2^*$ and $a_2a_2^*$. 
After the exchange,  the color $i$ appears on edges incident with $a$ and $a^*$,
the edges $b_1b_2$ and $b_1^*b_2^*$ are added to $R_B$
and the edge $a_2a_2^*$ is added to $R_A$.  In this process, 
we  added  one edge  to $R_A$
and at most two edges to $R_B$.

We have shown above how to find alternating paths between all MCC-pairs, and to modify $\varphi$ by switching on these.  Each time we make such a switch, we increase by one the size of the color class $i$ (and decrease by 2 the number of vertices that are missing color $i$). By repeating this process, we can therefore continue until the color class $i$ is a perfect matching of $Q$ in Case 1, and saturates $V(Q)\setminus (U\setminus W)$ in case 2. During this whole process, we  kept exactly  $k$ edges of $M\cup E(x)$ to be colored all differently, and kept the rest edges of 
$M\cup E(x)$ to be uncolored. 
 Hence we still have that all edges in $M_1$ receive different colors and all edges in $(M \cup E(x))\setminus M_1$ are uncolored. \\

\noindent \textbf{Step 3: Coloring $R_A$ and $R_B$ and extending the new color classes.}

%% \ell was not defined before 
Let  $p= |M\cap E(R_A \cup R_B)|$ and  $\ell= 2r+p$. 
In this step we leave the colors $1, 2, \ldots, k$ alone, but extend $\varphi$ by introducing $\ell$ new colors, say $k+1, \ldots, k+\ell$. 

Note that $p=0$ unless $n$ is odd (the edges of $M\cap E(R_A \cup R_B)$ are some   $M$-edges  that got uncolored when we dealt with missing colors at $v_1=x$ when $n$ is odd).  Let us first deal with the colors $k+1, \ldots, k+p$. Here, we start by coloring all the edges of $E(R_A \cup R_B) \cap M$ with $p$ distinct colors.  
If we look at the missing color count in~\eqref{eqn:total-number-missing-colors-Cases1-2} and~\eqref{eqn:total-number-missing-colors-Case3-1}, we see that we had at least $2n$ MCC-pairs in Step 2, each of which corresponds to at least one edge in each of $Q[A], Q[B]$ that will be uncolored at the end of Step 2. So we know that $e(R_A), e(R_B) \ge 2n > 2p$. For each color $i\in [k+1, k+p]$, 
and each $e_i=uv\in E(R_A \cup R_B)\cap M$, say $e_i \in E(R_A)$, we let $f=yz \in E(R_B)\setminus M$ such that $f$ is uncolored so far. 
We color both $e$ and $f$ by $i$.  
Similarly, if  $e_i \in E(R_B)$, we let $f=yz \in E(R_A)\setminus M$ such that $f$ is uncolored so far. 
We color both $e$ and $f$ by $i$.   
We define $A_i, B_i$ as the sets of vertices in $A, B$, respectively, that are incident with edges colored $i$. 
Then we have  $|A_i|=|B_i| =2$.  
Let $H_i$ be the subgraph of $Q[A, B]-M$ obtained by
deleting the vertex sets $A_i \cup B_i$ and removing all colored edges. 
Then from~\eqref{eqn:N-M-sizea}, \eqref{eqn:size-of-N_1(v_0)},  and~\eqref{eqn:size-of-N_1(v_0)0}, 
for any $v\in V(H_i)\setminus W$, we have 
\begin{eqnarray}
d_{H_i}(v)  &\ge& \tfrac{1}{4}(1+0.5\ve) n -p \ge \tfrac{1}{4}(1+0.4\ve) n,  \nonumber  
\end{eqnarray}
and for any  $v\in W$, we have 
\begin{eqnarray} 
d_{H_i}(v)  & \ge&   0.1\ve n -p >0.09\ve n.\nonumber 
\end{eqnarray}
Since $|W|< \xi n <0.09 \ve n$, Lemma~\ref{lem:matching-in-bipartite} implies that 
$H_i$ has a perfect matching $M_i$. We color all the edges of $M_i$
by color $i$. 

We now proceed to deal with the $\ell-p$ colors $k+p+1, \ldots, k+\ell$. Let us still call by $R_A$ and $R_B$ the remaining graphs in $Q[A]$ and $Q[B]$ respectively with uncolored edge at this point (i.e. removing those edges colored in the last paragraph); as more edges are colored in this step we continue to update $R_A,R_B$.  By (C2), we have  still have $\Delta(R_A), \Delta(R_B)<r$. 
In Case 1, as the only multiple edges of $Q$ were added  through the  the vertex set $B_0$, and $v_1\not\in V(R_B)$, we have 
 $\mu (R_A\cup R_B) \le  \mu(Q) \le 2$. In Case 2, all the possible multiple edges of $Q^{**}$ 
 were added through the vertex set $B_0$ and within $U^*$, and we have  
 $\mu (R_A\cup R_B) \le  \Delta(R_A\cup R_B)<r$. 
Theorem~\ref{thm:chromatic-index} gives us an $(\ell-p)$-edge-coloring of $R_A\cup R_B$ using the colors $k+p+1, \ldots, k+\ell$.  By Theorem~\ref{lem:equa-edge-coloring}, we may assume that all these new color classes differ in size by at most one. In addition, in Case 1 we have $e(R_A)=e(R_B)$, so by possibly renaming some color classes we may assume that each color appears on the same number of edges in $R_A$ as it does in $R_B$. We now work on extending our $\ell-p$ new color classes.

By (C1) we had $e(R_A), e(R_B)< s$, so since  $\ell>r$, each of
the colors $k+p+1,\ldots, k+\ell$ appears on fewer than $s/r+1$
edges in each of $R_A$ and $R_B$.  Note that we have 
\begin{numcases}{s/r+1 \le }
    3 \xi^{1/2} n+1  & Case 1, \nonumber  \\
    3.5 n^{5/6}+1, & Case 2.  \nonumber 
\end{numcases}
We will now color some of the edges of $Q[A, B]- M$ with the $\ell-p$ new colors so that in Case 1, each of these color classes is a perfect matching, and in Case 2, each of the new colors is seen by all the vertices of $V(Q)\setminus U$.

Let $i$ be a color in $[k+p+1,k+\ell]$. We define $A_i, B_i$ as the sets of vertices in $A, B$, respectively, that are incident with edges colored $i$. 
Then, from our discussion in the previous paragraph, $|A_i|,|B_i| < 2 (s/r+1)$. 
In Case 1, 
let $H_i$ be the subgraph of $Q[A, B]-M$ obtained by
deleting the vertex sets $A_i \cup B_i$ and removing all colored edges. As $|A_i|=|B_i|$ in this case, we know that $H_i$
is a balanced bipartite graph. 
In Case 2, when $n$ is even, we  choose an uncolored edge from $E(x)$, say $xw$,
where $w$ can be in $A$ or $B$, such that $w\not\in A_i\cup B_i$ (this is possible as $|E(x) \setminus M_1| \ge \ve n+2> |A_i\cup B_i| +\ell$). We then color $xw$ by $i$,
and include $w$ in either $A_i$ (if $w\in A$) or $B_i$ (if $w\in B$).  We still call the resulting set as $A_i$ or $B_i$.  We are able to choose edges from $E(x)$ 
in this entire step, as 
if $|M| \ge k$, 
we know that $|E(x)\setminus M_1| =|E(x)| =\Delta+1-(n-\Delta-1) \ge \ve n+2 >\ell$;  
and if $|M|<k$, then   
 by our choice of $M_1$ (that selected edges from $M$ with priority), we know that 
$
    |E(x)\setminus M_1| \ge  (\Delta+1-|M|) -(k-|M|) \ge \frac{1}{4}n-1>\ell. 
$
In Case 2, when $n$ is odd, we  choose an uncolored edge, say $xw (=v_1w)$ from $E(x)$ and perform the same operation as above. 

We will show that $H_i$ has a  matching of desired size and we will color these matching edges with $i$.
Then from~\eqref{eqn:N-M-sizea}, \eqref{eqn:size-of-N_1(v_0)},  and~\eqref{eqn:size-of-N_1(v_0)0},
for any $v\in V(H_i)\setminus W$, we have 
\begin{eqnarray}
d_{H_i}(v)  & \ge& \tfrac{1}{4}(1+0.5\ve) n -\ell \nonumber  \\
 &\ge & \tfrac{1}{4}(1+0.4\ve) n,  \nonumber  
\end{eqnarray}
and for any  $v\in W$, we have 
\begin{eqnarray} 
d_{H_i}(v)  & \ge&   0.1\ve n -\ell >0.09\ve n.\nonumber 
\end{eqnarray}

In Case 1,  
since $|W|< \xi n <0.09 \ve n$, Lemma~\ref{lem:matching-in-bipartite} implies that 
$H_i$ has a perfect matching $M_i$. We color all the edges of $M_i$
by color $i$.

In Case 2,  for each $i\in [k+1, \ell]$, assume, without loss of generality, that $|A_i| \ge |B_i|$. 
We let $C_i\subseteq (U\cap B)\setminus B_i$ such that  $|C_i| =|A_i|-|B_i|$. Thus is possible as $|U\cap B| \ge \lfloor 0.4\xi n \rfloor >2(s/r+1) >|A_i|$.  
We find in $H_i-C_i$ a 
 perfect matching $M_i$ by Lemma~\ref{lem:matching-in-bipartite}. 
Then we color the edges of $M_i$ with the color $i$. 

We denote the set of edges from $M\cup E(x)$ colored in Step 3 by 
$M_2$. \\

\noindent \textbf{Step 4: Coloring the remaining graph $R$ and getting a good coloring of $G^M$.}

Let $R$ be the graph induced by the remaining uncolored edges of $Q$ at this point, as well as the edges of $E(x) \setminus (M_1\cup M_2)$. Note that the graph $R-x$ is bipartite (and 
when $n$ is odd, $R$ is bipartite). Note also that $R$ is simple. 

Our goal is to get a good coloring of $G^M$, that is, a $(\Delta+2)$-edge-coloring of $G^M$ where all the edges of $M\cup E(x)$ receive different colors. Towards this goal, all edges of $E(G^M)\setminus E(R)$ have already been colored, using $k+\ell$ colors, in such a way that all edges of $M\cup E(x)$ that have been colored have received different colors. In this final step we will introduce $\Delta+2-k-\ell$ new colors to complete the job.

Suppose first that we are in case 1. Consider $d_R(v)$ for some $v\in V(G)$. Note that 
$v$ has degree exactly one larger in $G^M$ than in $G$, due to its incident edge in $M\cup E(x)$, and it may have one additional degree in $Q$ due to the additional edges we added in construction step 2 (i.e. if $v\in B_0$ there). Since we are in case 1, $v$ is already incident to exactly $k+\ell$ different colored edges, so we get that
\begin{numcases}{d_R(v)\leq }
    \Delta+2-k-\ell \hspace*{.1in}\textrm{ if $v\in B\setminus \{x\}$,}  \nonumber\\
    \Delta+1-k-\ell \hspace*{.1in}\textrm{ if $v\in A$}. \nonumber 
\end{numcases}
In this case,  as all the edges of $Q[B]$ were colored already, we have 
$$d_R(x) \le n-2|M| \le 0.2 \ve n+1\leq \Delta+2-k-\ell.$$

%In case 2 this bound works. But in case 1 x has smaller degree so its still ok, even though we didn't colour as many incident edgres.
 
Suppose now that we are in case 2.  If $|M| =\lfloor \frac{n}{2} -0.1\ve n\rfloor$,
then we get $d_R(x) \le \Delta+2-k-\ell$ the same way as above. Thus we assume that $|M|=n-1-\Delta$. 
In this case, by Step 3,  we know that $|M_1\cup M_2|=k+\ell$. As all edges of $Q[B]$ were colored already,  we get
$$ d_R(x) =|E(x) \setminus (M_1\cup M_2)| \le |M\cup E(x)| -k-\ell = \Delta+1-k-\ell.$$ 
If $v\in V(G)\setminus (U\setminus W)$, then $v$ is incident to exactly $k+\ell$ colored edges already, so the same argument from the above paragraph tells us that
\begin{numcases}{d_R(v)\leq}
    \Delta+2-k-\ell \hspace*{.1in}\textrm{ if $v\in B\setminus (U\setminus W)$}, \nonumber \\
    \Delta+1-k-\ell \hspace*{.1in}\textrm{ if $v\in A\setminus (U\setminus W)$}. \nonumber 
\end{numcases}
For any $v\in U\setminus W$, we have 
$d_R(v) \le \Delta+2-(k-|\pbar_0(v)|),$ where $|\pbar_0(v)|$ is the number of colors missing at $v$ after coloring edges in $Q_{AB}$ in Step 1. 
As $|\pbar_0(v)| <3.4 n^{2/3}$ by~\eqref{eqn:number-missing-color-Case2} and the definition of $W$, we have $d_R(v) \le \Delta -0.4\xi n - \frac{1}{2}\Delta -n^{2/3} -3+3.4 n^{2/3} +1<\Delta(R)$, where we have $d_{Q^{**}}(v) \le \Delta -0.4\xi n$ by Claim~\ref{claim:vertex-degree-in-Q^*}. 

We now want to apply Lemma 13(b) to the graph $R$ with bipartition $(A, B\setminus \{x\})$ of $R-x$, $J=J_0:=(M\cup E(x))\setminus (M_1\cup M_2)$, $\Delta(R[J]-x) \le 1$,  and the number of colors $c:=\Delta+2-k-\ell$. Note that $R$ is a simple graph with $\Delta(R)\leq c$, and note that every vertex in $A$ has degree strictly less than $c$. Since $|J|\leq  \Delta(R)\leq c$, we can 
color each edge of $J$ using a distinct color from our palette of $c$ new colors to get a pre-coloring. But then the lemma guarantees a $(\Delta+2-k-\ell)$-edge-coloring  of $R$ 
such that each edge of $J$ is colored differently.  This completes our proof.\qed 

\bibliographystyle{abbrv}
\bibliography{BIB}

\begin{thebibliography}{10}

\bibitem{Behzad-total-coloring}
M.~Behzad.
\newblock {G}raphs and {T}heir {C}hromatic {N}umbers.
\newblock {\em Ph.D. thesis, Michigan State University, East Lansing}, 1965.

\bibitem{BergeTBformula}
C.~Berge.
\newblock Sur le couplage maximum d'un graphe.
\newblock {\em C. R. Acad. Sci. Paris}, 247:258--259, 1958.

\bibitem{MR4694336}
Y.~Cao, G.~Chen, G.~Jing, and S.~Shan.
\newblock The core conjecture of {H}ilton and {Z}hao.
\newblock {\em J. Combin. Theory Ser. B}, 166:154--182, 2024.

\bibitem{DMS}
A.~Dalal, J.~McDonald, and S.~Shan.
\newblock Total coloring graphs with large maximum degree.
\newblock {\em Accepted by Journal of Graph Theory}, 2025.

\bibitem{Dirac}
G.~A. Dirac.
\newblock Some theorems on abstract graphs.
\newblock {\em Proc. London Math. Soc. (3)}, 2:69--81, 1952.

\bibitem{MR4679850}
J.~Geetha, N.~Narayanan, and K.~Somasundaram.
\newblock Total colorings---a survey.
\newblock {\em AKCE Int. J. Graphs Comb.}, 20(3):339--351, 2023.

\bibitem{MR2028248}
S.~Gr\"unewald and E.~Steffen.
\newblock Independent sets and 2-factors in edge-chromatic-critical graphs.
\newblock {\em J. Graph Theory}, 45(2):113--118, 2004.

\bibitem{Gupta-67}
R.~G. Gupta.
\newblock {\em Studies in the Theory of Graphs}.
\newblock PhD thesis, Tata Institute of Fundamental Research, Bombay, 1967.

\bibitem{HS}
A.~Hajnal and E.~Szemer\'edi.
\newblock Proof of a conjecture of {P}. {E}rd{\H{o}}s.
\newblock In {\em Combinatorial theory and its applications, {I}-{III} ({P}roc. {C}olloq., {B}alatonf\"ured, 1969)}, volume~4 of {\em Colloq. Math. Soc. J\'anos Bolyai}, pages 601--623. North-Holland, Amsterdam-London, 1970.

\bibitem{MR0297607}
A.~Hajnal and E.~Szemer\'{e}di.
\newblock {P}roof of a conjecture of {P}. {E}rd{\H{o}}s.
\newblock In {\em Combinatorial theory and its applications, {I}-{III} ({P}roc. {C}olloq., {B}alatonf\"{u}red, 1969)}, volume~4 of {\em Colloq. Math. Soc. J\'{a}nos Bolyai}, pages 601--623. North-Holland, Amsterdam-London, 1970.

\bibitem{H1962}
S.~L. Hakimi.
\newblock On realizability of a set of integers as degrees of the vertices of a linear graph. {I}.
\newblock {\em J. Soc. Indust. Appl. Math.}, 10:496--506, 1962.

\bibitem{MR1226136}
A.~J.~W. Hilton and H.~R. Hind.
\newblock The total chromatic number of graphs having large maximum degree.
\newblock {\em Discrete Math.}, 117(1-3):127--140, 1993.

\bibitem{Holyer}
I.~Holyer.
\newblock The np-completeness of edge-coloring.
\newblock {\em SIAM Journal on Computing}, 10(4):718--720, 1981.

\bibitem{JT}
T.~R. Jensen and B.~Toft.
\newblock {\em Graph coloring problems}.
\newblock Wiley-Interscience Series in Discrete Mathematics and Optimization. John Wiley \& Sons, Inc., New York, 1995.
\newblock A Wiley-Interscience Publication.

\bibitem{MR1511872}
D.~K\"{o}nig.
\newblock \"{U}ber {G}raphen und ihre {A}nwendung auf {D}eterminantentheorie und {M}engenlehre.
\newblock {\em Math. Ann.}, 77(4):453--465, 1916.

\bibitem{MR300623}
C.~J.~H. McDiarmid.
\newblock The solution of a timetabling problem.
\newblock {\em J. Inst. Math. Appl.}, 9:23--34, 1972.

\bibitem{PS2023}
M.~J. Plantholt and S.~Shan.
\newblock Edge coloring graphs with large minimum degree.
\newblock {\em J. Graph Theory}, 102(4):611--632, 2023.

\bibitem{Plantholt-Shan}
M.~J. Plantholt and S.~Shan.
\newblock On the multigraph overfull conjecture.
\newblock {\em Journal of Graph Theory}, 109(2), 2025.

\bibitem{SHAN2022429}
S.~Shan.
\newblock Chromatic index of dense quasirandom graphs.
\newblock {\em J. Combin. Theory Ser. B}, 157:429--450, 2022.

\bibitem{StiebSTF-Book}
M.~Stiebitz, D.~Scheide, B.~Toft, and L.~M. Favrholdt.
\newblock {\em Graph edge coloring}.
\newblock Wiley Series in Discrete Mathematics and Optimization. John Wiley \& Sons, Inc., Hoboken, NJ, 2012.
\newblock Vizing's theorem and Goldberg's conjecture, With a preface by Stiebitz and Toft.

\bibitem{NPhTotal}
A.~Sánchez-Arroyo.
\newblock Determining the total colouring number is np-hard.
\newblock {\em Discrete Mathematics}, 78(3):315--319, 1989.

\bibitem{TutteTBformula}
W.~T. Tutte.
\newblock The factorization of linear graphs.
\newblock {\em Journal of the London Mathematical Society}, s1-22(2):107--111, 1947.

\bibitem{Vizing-2-classes}
V.~G. Vizing.
\newblock Critical graphs with given chromatic class.
\newblock {\em Diskret. Analiz No.}, 5:9--17, 1965.

\bibitem{Vizing-total-coloring}
V.~G. Vizing.
\newblock Some unsolved problems in graph theory.
\newblock {\em Russ. Math. Surv.}, 23:125--141, 1968.

\bibitem{WestText}
D.~B. West.
\newblock {\em Introduction to Graph Theory}.
\newblock Prentice Hall, 2 edition, September 2000.

\end{thebibliography}
\end{document}